\tikzstyle{edge}=[line width=.75pt]
\tikzstyle{fnode}=[fill=black,draw=black,circle,scale=\s]
\tikzstyle{pathnode}=[inner sep=.9pt]
\newcommand{\red}{red!50!white}
\newcommand{\blue}{blue!50!white}
\newcommand{\green}{green!50!gray}
\newcommand{\gray}{white!50!gray}
\tikzstyle{facet}=[fill=\green,fill opacity=0.6]
\tikzstyle{facetR}=[fill=\red,fill opacity=0.8]
\tikzstyle{facetL}=[fill=\blue,fill opacity=0.8]
\tikzstyle{pathnode}=[inner sep=.9pt]
\newcounter{i}
\newcounter{n}
\newcounter{m}
\newcounter{curx}
\newcounter{cury}
\newcounter{orient}
\newcommand{\nuTree}[7]{
	%Parameter 1: bounding path!!!always starts with a north step
	%Parameter 2: nodes
	%Parameter 3: edges
	%Parameter 4: special coordinates
	%Parameter 5: scaling
	%Parameter 6: text
	%Parameter 7: background color
	%%
	\begin{tikzpicture}
		\def\dx{#5};
		\def\s{3*\dx};	%needed for scaling the nodes correctly; see fnode-style
		\setcounter{n}{0}
		\setcounter{m}{0}
		\setcounter{orient}{0}
		\setcounter{curx}{0}
		\setcounter{cury}{0}
		%%compute y-coordinate of endpoint of nu
		\foreach \step in {#1}{
			\ifthenelse{\equal{\theorient}{0}}{
				\addtocounter{n}{\step}
				\setcounter{orient}{1}
			}{
				\addtocounter{m}{\step}
				\setcounter{orient}{0}
			}
		}
		%%draw nu and background grid
		\setcounter{i}{0}
		\setcounter{orient}{0}
		\foreach \step in {#1}{
			\ifthenelse{\equal{\theorient}{0}}{
				\coordinate(q\thei) at (\thecurx*\dx,\thecury*\dx);
				\stepcounter{i}
				\coordinate(q\thei) at (\thecurx*\dx,{(\thecury+\step)*\dx});
				\stepcounter{i}
				\draw[\gray,dashed](\thecurx*\dx,\thecury*\dx) -- (\thecurx*\dx,{(\thecury+\step)*\dx});
				\begin{pgfonlayer}{background}
					\foreach \k in {1,...,\step}{
						\draw[\gray,dashed](0*\dx,{(\thecury+\k)*\dx}) -- (\thecurx*\dx,{(\thecury+\k)*\dx});
					}
				\end{pgfonlayer}
				\addtocounter{cury}{\step}
				\setcounter{orient}{1}
			}{
				\draw[\gray,dashed](\thecurx*\dx,\thecury*\dx) -- ({(\thecurx+\step)*\dx},\thecury*\dx);
				\begin{pgfonlayer}{background}
					\foreach \k in {0,...,\step}{
						\ifthenelse{\equal{\k}{\step}}{
						}{
							\draw[\gray,dashed]({(\thecurx+\k)*\dx},\thecury*\dx) -- ({(\thecurx+\k)*\dx},\then*\dx);
						}
					}
				\end{pgfonlayer}
				\addtocounter{curx}{\step}
				\setcounter{orient}{0}
			}
		}
		%%draw background color
		\ifthenelse{\equal{#7}{}}{}{
			\coordinate(zero) at (0*\dx,0*\dx);
			\coordinate(q\thei) at(0*\dx,\then*\dx);
			\addtocounter{i}{-1}
			\foreach \k in {0,2,...,\thei}{
				\pgfmathparse{\k+1}
				\let\res\pgfmathresult
				\begin{pgfonlayer}{middle}
					\fill[#7] (zero |- q\k) -- (q\k) -- (q\res) -- (zero |- q\res) -- cycle;
				\end{pgfonlayer}
			}
		}
		%%draw nodes
		\setcounter{i}{1}
		\foreach \a/\b/\c/\d in {#2}{
			\draw(\a*\dx,\b*\dx) node[fnode,draw=\c,fill=\c](p\thei){};
			\ifthenelse{\equal{\d}{d}}{
				\draw[orange!50!gray,decorate,decoration={snake,amplitude=.5,segment length=2.5}](p\thei) -- (\a*\dx,{(\then+.5)*\dx});
			}{
			}
			\stepcounter{i}
		}
		%%draw edges
		\foreach \a/\b in {#3}{
			\draw[line width=3*\dx pt](\a) -- (\b);
		}
		%%draw special coordinates
		\foreach \a/\b/\c in {#4}{
			\draw[\c,line width=.5pt](\a*\dx,\b*\dx) circle(.25*\dx);
		}
		%%draw text label
		\foreach \a/\b/\c/\d in {#6}{
			\draw[\d](\a*\dx,\b*\dx) node[scale=.75,anchor=west]{\c};
		}
	\end{tikzpicture}
}
\newcommand{\nuPath}[7]{
	%Parameter 1: bounding path!!!always starts with a north step
	%Parameter 2: height vector
	%Parameter 3: marked coordinates
	%Parameter 4: scaling
	%Parameter 5: text
	%Parameter 6: background color
	%Parameter 7: draw wiggly lines
	%%
	\begin{tikzpicture}
		\def\dx{#4};
		\setcounter{n}{0}
		\setcounter{m}{0}
		\setcounter{orient}{0}
		\setcounter{curx}{0}
		\setcounter{cury}{0}
		%%compute y-coordinate of endpoint of nu
		\foreach \step in {#1}{
			\ifthenelse{\equal{\theorient}{0}}{
				\addtocounter{n}{\step}
				\setcounter{orient}{1}
			}{
				\addtocounter{m}{\step}
				\setcounter{orient}{0}
			}
		}
		%%draw nu and background grid
		\setcounter{i}{0}
		\setcounter{orient}{0}
		\foreach \step in {#1}{
			\ifthenelse{\equal{\theorient}{0}}{
				\coordinate(q\thei) at (\thecurx*\dx,\thecury*\dx);
				\stepcounter{i}
				\coordinate(q\thei) at (\thecurx*\dx,{(\thecury+\step)*\dx});
				\stepcounter{i}
				\draw[\gray,dashed](\thecurx*\dx,\thecury*\dx) -- (\thecurx*\dx,{(\thecury+\step)*\dx});
				\begin{pgfonlayer}{background}
					\foreach \k in {1,...,\step}{
						\draw[\gray,dashed](0*\dx,{(\thecury+\k)*\dx}) -- (\thecurx*\dx,{(\thecury+\k)*\dx});
					}
				\end{pgfonlayer}
				\addtocounter{cury}{\step}
				\setcounter{orient}{1}
			}{
				\draw[\gray,dashed](\thecurx*\dx,\thecury*\dx) -- ({(\thecurx+\step)*\dx},\thecury*\dx);
				\begin{pgfonlayer}{background}
					\foreach \k in {0,...,\step}{
						\ifthenelse{\equal{\k}{\step}}{
						}{
							\draw[\gray,dashed]({(\thecurx+\k)*\dx},\thecury*\dx) -- ({(\thecurx+\k)*\dx},\then*\dx);
						}
					}
				\end{pgfonlayer}
				\addtocounter{curx}{\step}
				\setcounter{orient}{0}
			}
		}
		%%draw background color
		\ifthenelse{\equal{#6}{}}{}{
			\coordinate(zero) at (0*\dx,0*\dx);
			\coordinate(q\thei) at(0*\dx,\then*\dx);
			\addtocounter{i}{-1}
			\foreach \k in {0,2,...,\thei}{
				\pgfmathparse{\k+1}
				\let\res\pgfmathresult
				\begin{pgfonlayer}{background}
					\fill[#6] (zero |- q\k) -- (q\k) -- (q\res) -- (zero |- q\res) -- cycle;
				\end{pgfonlayer}
			}
		}
		%%draw path
		\setcounter{i}{0}
		\setcounter{cury}{0}
		\foreach \a in {#2}{
			\ifthenelse{\equal{\thei}{0}}{
				\draw[line width=2pt,rounded corners](\thei*\dx,\thei*\dx) -- (\thei*\dx,{\thei+\a)*\dx});
			}{
				\draw[line width=2pt,rounded corners]({(\thei-1)*\dx},\thecury*\dx) -- (\thei*\dx,\thecury*\dx);
				\draw[line width=2pt,rounded corners](\thei*\dx,\thecury*\dx) -- (\thei*\dx,\a*\dx);
				\ifthenelse{\equal{#7}{1}}{
					\draw[orange!50!gray,decorate,decoration={snake,amplitude=.5,segment length=2.5}]({(\thei-1)*\dx},\thecury*\dx) -- ({(\thei-1)*\dx},{(\then+.5)*\dx});
				}{}
			}
			\setcounter{cury}{\a}
			\stepcounter{i}
		}
		%%draw special coordinates
		\foreach \a/\b/\c/\d in {#3}{
			\ifthenelse{\equal{\c}{}}{}{
				\draw[\c,line width=.5pt](\a*\dx,\b*\dx) circle(.25*\dx);
			}
			\fill[\d,line width=.5pt](\a*\dx,\b*\dx) circle(.15*\dx);
		}
		%%draw text label
		\foreach \a/\b/\c/\d in {#5}{
			\draw[\d](\a*\dx,\b*\dx) node[scale=.9,anchor=west]{\c};
		}
	\end{tikzpicture}
}
\newtheorem{theorem}{Theorem}[section]
\newtheorem{lemma}[theorem]{Lemma}
\newtheorem{corollary}[theorem]{Corollary}
\newtheorem{proposition}[theorem]{Proposition}
\theoremstyle{definition}
\newtheorem{definition}[theorem]{Definition}
\newtheorem{remark}[theorem]{Remark}
\newcommand{\Dyck}[1]{\operatorname{Dyck}_{#1}}
\DeclareMathOperator{\alt}{\operatorname{alt}}
\DeclareMathOperator{\elev}{\operatorname{elev}}
\DeclareMathOperator{\flush}{\operatorname{flush}}
\newcommand{\Tam}[1]{\operatorname{Tam}_{#1}}
\newcommand{\TamTrees}[1]{\operatorname{Tam}^{tr}_{#1}}
\newcommand{\altTam}[2]{\operatorname{Tam}_{#1}(#2)}
\newcommand{\altTamTrees}[2]{\operatorname{Tam}^{tr}_{#1}(#2)}
\newcommand{\drot}[1]{\lessdot_{#1}}
\newcommand{\reverse}[1]{\overleftarrow{#1}}
\newcommand{\nurev}{\overleftarrow{\nu}}
\newcommand{\hflushing}[2]{f^h_{#1,#2}}
\newcommand{\vflushing}[2]{f^v_{#1,#2}}
	\title[Alt $\nu$-Tamari lattices]{On linear intervals in the alt $\nu$-Tamari lattices}
	\date{May 3, 2023}
	\author[C.~Ceballos]{Cesar Ceballos$^{\diamond}$} 
	\address[C.~Ceballos]{Institute of Geometry, TU Graz, Graz, Austria}
	\email{cesar.ceballos@tugraz.at}
	\urladdr{http://www.geometrie.tugraz.at/ceballos/}
	\author[C.~Chenevière]{Clément Chenevière$^{\star}$} 
	\address[C.~Chenevière]{Institut de Recherche Mathématique Avancée, Université de Strasbourg, Strasbourg, France\\
		and Ruhr-Universität Bochum, Bochum, Germany}
	\email{ccheneviere@unistra.fr}
	\urladdr{https://irma.math.unistra.fr/~cheneviere/}
	\thanks{This project was supported by the ANR-FWF International Cooperation Project PAGCAP, funded by the ANR Project
		ANR-21-CE48-0020 and the FWF Project I 5788. Cesar Ceballos was also supported by the Austrian Science Fund FWF,
		Project P 33278.}
    \subjclass[2020]{06A07, 06B05, 05A19}
\begin{document}
		\maketitle
		
		\begin{abstract}
			Given a lattice path $\nu$, the $\nu$-Tamari lattice and the $\nu$-Dyck lattice are two natural examples of partial order structures on the set of lattice paths that lie weakly above $\nu$.
			In this paper, we introduce a more general family of lattices, called alt $\nu$-Tamari lattices, which contains these two examples as particular cases. Unexpectedly, we show that all these lattices have the same number of linear intervals.
			%We show that the $\nu$-Tamari lattice and the $\nu$-Dyck lattice have the same number of linear intervals. We also introduce a more general family of lattices called the alt $\nu$-Tamari lattices, which also have the same number of linear intervals.
		\end{abstract}
		
		\section{Introduction}
		
		The classical Tamari lattice is a partial order on Catalan objects which has inspired a vast amount of research in various mathematical fields~\cite{tamari_festschrift}. One direction of research which has received a lot of attention in recent years regards its number of intervals~\cite{chapoton_tamari_intervals_2005}, which is conjectured to be equal to the dimension of the alternating component of an $\mathfrak S_n$-module in the study of trivariate diagonal harmonics~\cite{haiman_conjectures_1994}. 
		Motivated by this intriguing connection, Bergeron introduced a generalization of the Tamari lattice called the $m$-Tamari lattice, and conjectured that its number of intervals again coincides with the dimension of the alternating component of an $\mathfrak S_n$-module in higher trivariate diagonal 
		%harmonics~\cite{Bergeron_combinatorics_2012,bergeron_higher_2012}. 
		harmonics~\cite{bergeron_higher_2012}. 
		A formula for their enumeration and connections to representation theory can be found in~\cite{bousquet_representation_2013,bousquet_number_2011}.
		A further generalization of the Tamari lattice, which includes the $m$-Tamari lattice, is the $\nu$-Tamari lattice introduced by Pr\'eville-Ratelle and Viennot~\cite{preville_nu_tamari_2017}. 
		These lattices are indexed by a lattice path $\nu$, and their number of intervals is connected to the enumeration of non-separable planar maps as shown in~\cite{fang_enumeration_2017}.

		Inspired by the enumeration of intervals in the classical Tamari lattice and its generalizations, and guided by computer experimentation, Chapoton proposed to study the enumeration of the simpler class of linear intervals (intervals which are chains). 
		This led to the work of the second author in~\cite{cheneviere_linear_2022}, where he provides an explicit simple formula for the number of linear intervals in the classical Tamari lattice, and shows that their enumeration coincides with the enumeration of linear intervals in the Dyck lattice. 
		The Dyck lattice, sometimes called the Stanley lattice, is perhaps the most natural poset on Dyck paths, defined by $P\leq Q$ if $Q$ is weakly above $P$.
		In~\cite{cheneviere_linear_2022}, the author also defines a new family of posets called alt Tamari posets, which contain the Tamari lattice and the Dyck lattice as particular cases. He shows that all alt Tamari posets have the same number of linear intervals of any given length.  
		
		In this paper, we generalize the results in~\cite{cheneviere_linear_2022} by introducing a new family of posets called alt $\nu$-Tamari posets. We show that they are lattices, and that they all have the same number of linear intervals of any given length. 
		Figure~\ref{fig_nuTamari_nDyck_ENEENN} and Figure~\ref{fig_alt_nu_tamari_ENEENN_delta_one} illustrate the three different alt $\nu$-Tamari lattices for $\nu=ENEENN$. In each case, the number of linear intervals of length~$k$ is given by $\ell_k$ where $\ell=(\ell_0,\ell_1,\ell_2,\ell_3)=(16,24,16,3)$. For instance,~16 represents the trivial intervals of length 0, which are just the elements of each poset; there are 24 linear intervals of length 1, which correspond to the cover relations (edges in the figures); there are 16 linear intervals of length 2, and 3 linear intervals of length~3. The fact that these numbers coincide is somewhat surprising, since the posets look quite different. As a warm up exercise, the reader is invited to find the 3 linear intervals of length 3 in each of the figures.   
		Figure~\ref{fig_altnu_lattices_ENEEN_paths} illustrates the three different alt $\nu$-Tamari lattices for $\nu=ENEEN$.
		
		\begin{figure}[h]
			\begin{center}
				\begin{tikzpicture}%
	[x={(0cm, 4cm)},
	y={(-3.7cm, 0cm)},
	z={(-3cm, 6cm)},
	scale=0.27,
	back/.style={loosely dotted, thin},
	edge/.style={color=blue!95!black, thick},
	facet/.style={fill=red!95!black,fill opacity=0.800000},
	vertex/.style={inner sep=1pt,circle,draw=green!25!black,fill=green!75!black,thick,anchor=base}]
\scriptsize

%%
%%
%% Coordinates of the vertices:
%% The name is given by the number of lattice points at heights 0,1,2,.... 
%% The label is given by the sequence of horizontal lengths of the path at heights 0,1,2,.... 
%% For instance: 
%%label 0003 is the path NNNEEE -> name 1114
%%label 1110 is the path ENENEN -> name 2221
%%label 0201 is the path NEENNE -> name 1312
%%
%%

\coordinate (c2311) at (0,0,0);
\coordinate (c1411) at (1,0,0);
\coordinate (c1321) at (2,1,0);
\coordinate (c1312) at (3,2,0);
\coordinate (c1222) at (3,4,0);
\coordinate (c1213) at (3,6,0);
\coordinate (c2221) at (0,3,0);
\coordinate (c1231) at (2,3,0);
\coordinate (c2212) at (0,6,0);
\coordinate (c1132) at (3,4,1);
\coordinate (c1123) at (3,6,1);
\coordinate (c2131) at (0,3,1);
\coordinate (c1141) at (2,3,1);
\coordinate (c2122) at (0,6,1);
\coordinate (c2113) at (0,7,1);
\coordinate (c1114) at (3,7,1);

% Drawing nodes
\node (n2311) at (c2311) {1200};
\node (n1411) at (c1411) {0300};
\node (n1321) at (c1321) {0210};
\node (n1312) at (c1312) {0201};
\node (n1222) at (c1222) {0111};
\node (n1213) at (c1213) {0102}; 
\node (n2221) at (c2221) {1110};
\node (n1231) at (c1231) {0120};
\node (n2212) at (c2212) {1101};
\node (n1132) at (c1132) {0021};
\node (n1123) at (c1123) {0012}; 
\node (n2131) at (c2131) {1020};
\node (n1141) at (c1141) {0030};
\node (n2122) at (c2122) {1011};
\node (n2113) at (c2113) {1002};
\node (n1114) at (c1114) {0003};

% Drawing edges in the front
% List of edges in the front
\def\listEdgesFront{
n2311/n1411, n1411/n1321, n1321/n1312, n1312/n1222, n1222/n1213, n1321/n1231,
n2311/n2221, n2221/n1231, n1231/n1222, n2221/n2212, n2212/n1213,
n1141/n1132, n1132/n1123,  n2122/n1123,
n1213/n1123, n1222/n1132, n2212/n2122,
n2122/n2113, n2113/n1114, n1123/n1114}
;
\foreach \x/\y in \listEdgesFront{
%    \draw[edge] (\x) -- (\y);
    \draw[edge,->] (\x) -- (\y);
}

% Drawing edges in the back
% List of edges in the back
\def\listEdgesBack{
n2131/n1141, n2131/n2122,
n2221/n2131, n1231/n1141};
\foreach \x/\y in \listEdgesBack{
%    \draw[edge,back] (\x) -- (\y);
    \draw[edge,back,->] (\x) -- (\y);
}

\end{tikzpicture}
				\begin{tikzpicture}%
	[x={(0cm, 4cm)},
	y={(-3.5cm, 0cm)},
	z={(-2cm, 3cm)},
	scale=0.36,
	back/.style={loosely dotted, thin},
	edge/.style={color=blue!95!black, thick},
	facet/.style={fill=red!95!black,fill opacity=0.800000},
	vertex/.style={inner sep=1pt,circle,draw=green!25!black,fill=green!75!black,thick,anchor=base}]
\scriptsize

%%
%%
%% Coordinates of the vertices:
%% The name is given by the number of lattice points at heights 0,1,2,.... 
%% The label is given by the sequence of horizontal lengths of the path at heights 0,1,2,.... 
%% For instance: 
%%label 0003 is the path NNNEEE -> name 1114
%%label 1110 is the path ENENEN -> name 2221
%%label 0201 is the path NEENNE -> name 1312
%%
%%

\coordinate (c2311) at (0,0,0);
\coordinate (c2221) at (0,0,1);
\coordinate (c2131) at (0,0,2);
\coordinate (c2212) at (0,1,1);
\coordinate (c2122) at (0,1,2);
\coordinate (c2113) at (0,2,2);
\coordinate (c1411) at (1,0,0);
\coordinate (c1321) at (1,0,1);
\coordinate (c1231) at (1,0,2);
\coordinate (c1141) at (1,0,3);
\coordinate (c1312) at (1,1,1);
\coordinate (c1222) at (1,1,2);
\coordinate (c1132) at (1,1,3);
\coordinate (c1213) at (1,2,2);
\coordinate (c1123) at (1,2,3);
\coordinate (c1114) at (1,3,3);

% Drawing nodes
\node (n2311) at (c2311) {1200};
\node (n1411) at (c1411) {0300};
\node (n1321) at (c1321) {0210};
\node (n1312) at (c1312) {0201};
\node (n1222) at (c1222) {0111};
\node (n1213) at (c1213) {0102}; 
\node (n2221) at (c2221) {1110};
\node (n1231) at (c1231) {0120};
\node (n2212) at (c2212) {1101};
\node (n1132) at (c1132) {0021};
\node (n1123) at (c1123) {0012}; 
\node (n2131) at (c2131) {1020};
\node (n1141) at (c1141) {0030};
\node (n2122) at (c2122) {1011};
\node (n2113) at (c2113) {1002};
\node (n1114) at (c1114) {0003};

% Drawing edges in the front
% List of edges in the front
\def\listEdgesFront{
n2311/n2221, n2221/n2212, n2212/n2122, n2122/n2113,
n1411/n1321, n1321/n1312, n1312/n1222, n1222/n1213, n1213/n1123, n1123/n1114,
n1321/n1231, n1231/n1222, n1222/n1132, n1132/n1123,
n1231/n1141, n1141/n1132,
n2311/n1411, n2221/n1321, n2212/n1312, n2122/n1222, n2113/n1213}
;
\foreach \x/\y in \listEdgesFront{
%    \draw[edge] (\x) -- (\y);
    \draw[edge,->] (\x) -- (\y);
}

% Drawing edges in the back
% List of edges in the back
\def\listEdgesBack{
n2221/n2131, n2131/n2122, n2131/n1231};
\foreach \x/\y in \listEdgesBack{
%    \draw[edge,back] (\x) -- (\y);
    \draw[edge,back,->] (\x) -- (\y);
}

\end{tikzpicture}
			\end{center}
			\caption{The $\nu$-Tamari lattice and $\nu$-Dyck lattice for $\nu=ENEENN$. They are the alt~$\nu$-Tamari lattices $\altTam{\nu}{\delta}$ for $\delta=(2,0,0)$ and $\delta=(0,0,0)$, respectively.}
			\label{fig_nuTamari_nDyck_ENEENN}
		\end{figure}
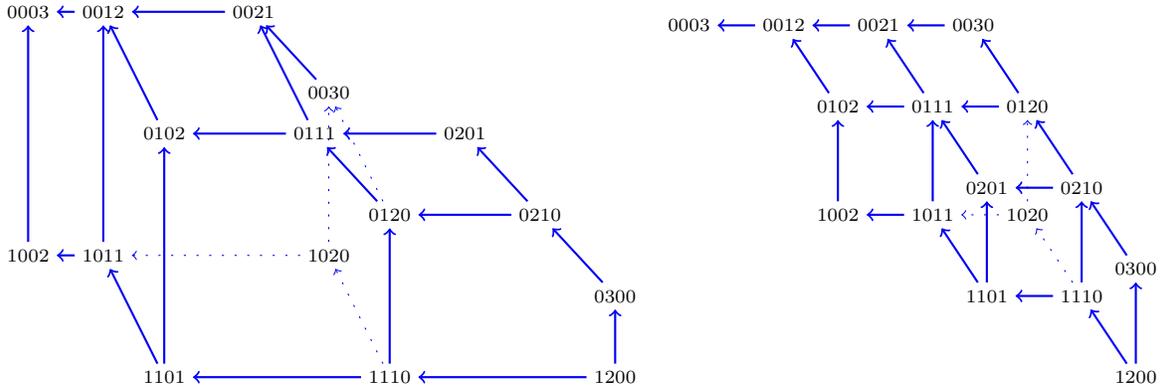
		
		\begin{figure}[h]
			\begin{center}
				\begin{tikzpicture}%
	[x={(0cm, 4cm)},
	y={(-5cm, 0cm)},
	z={(-2cm, 4cm)},
	scale=0.35,
	back/.style={loosely dotted, thin},
	edge/.style={color=blue!95!black, thick},
	facet/.style={fill=red!95!black,fill opacity=0.800000},
	vertex/.style={inner sep=1pt,circle,draw=green!25!black,fill=green!75!black,thick,anchor=base}]
\scriptsize

%%
%%
%% Coordinates of the vertices:
%% The name is given by the number of lattice points at heights 0,1,2,.... 
%% The label is given by the sequence of horizontal lengths of the path at heights 0,1,2,.... 
%% For instance: 
%%label 0003 is the path NNNEEE -> name 1114
%%label 1110 is the path ENENEN -> name 2221
%%label 0201 is the path NEENNE -> name 1312
%%
%%

\coordinate (c2311) at (0,-1,0);
\coordinate (c1411) at (1,-1,0);
\coordinate (c2221) at (0,0,0);
\coordinate (c1321) at (1,0,0);
\coordinate (c1231) at (2,1,0);
\coordinate (c1141) at (2,2,0);
\coordinate (c2131) at (0,2,0);
\coordinate (c2212) at (0,0,1);
\coordinate (c1312) at (1,0,1);
\coordinate (c1222) at (2,1,1);
\coordinate (c1132) at (2,2,1);
\coordinate (c2122) at (0,2,1);
\coordinate (c1213) at (3,2,1);
\coordinate (c1123) at (3,3,1);
\coordinate (c1114) at (3,4,1);
\coordinate (c2113) at (0,4,1);

% Drawing nodes
\node (n2311) at (c2311) {1200};
\node (n1411) at (c1411) {0300};
\node (n1321) at (c1321) {0210};
\node (n1312) at (c1312) {0201};
\node (n1222) at (c1222) {0111};
\node (n1213) at (c1213) {0102}; 
\node (n2221) at (c2221) {1110};
\node (n1231) at (c1231) {0120};
\node (n2212) at (c2212) {1101};
\node (n1132) at (c1132) {0021};
\node (n1123) at (c1123) {0012}; 
\node (n2131) at (c2131) {1020};
\node (n1141) at (c1141) {0030};
\node (n2122) at (c2122) {1011};
\node (n2113) at (c2113) {1002};
\node (n1114) at (c1114) {0003};

% Drawing edges in the front
% List of edges in the front
\def\listEdgesFront{
n2311/n1411, n1411/n1321, n2311/n2221,
n2221/n1321, n1321/n1231, n1231/n1141, n2221/n2131, n2131/n1141,
n1321/n1312, n1231/n1222, n1141/n1132, n2131/n2122,
n1312/n1222, n1222/n1132, n2122/n1132,
n1222/n1213, n1213/n1123, n1123/n1114, n2122/n2113, n2113/n1114, n1132/n1123}
;
\foreach \x/\y in \listEdgesFront{
%    \draw[edge] (\x) -- (\y);
    \draw[edge,->] (\x) -- (\y);
}

% Drawing edges in the back
% List of edges in the back
\def\listEdgesBack{
n2221/n2212,
n2212/n2122, n2212/n1312};
\foreach \x/\y in \listEdgesBack{
%    \draw[edge,back] (\x) -- (\y);
    \draw[edge,back,->] (\x) -- (\y);
}

\end{tikzpicture}
			\end{center}
			\caption{The alt $\nu$-Tamari lattice $\altTam{\nu}{\delta}$ for $\nu=ENEENN$ and $\delta=(1,0,0)$.}
			\label{fig_alt_nu_tamari_ENEENN_delta_one}
		\end{figure}
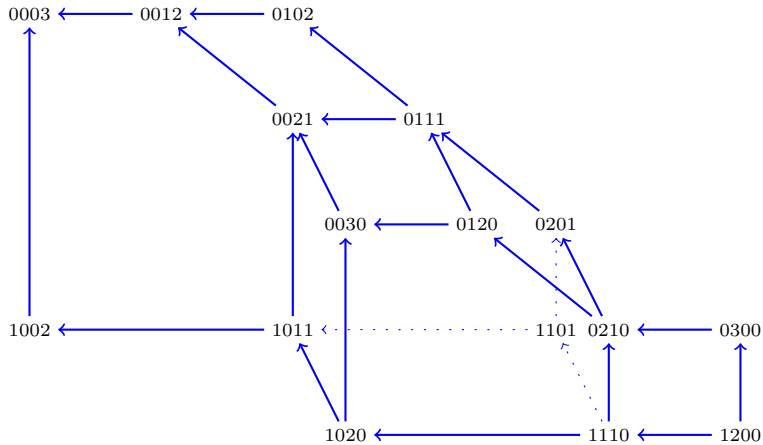
		
		As the figures suggest, the alt $\nu$-Tamari lattices possess a rich underlying geometric structure, which seems to be realizable as a polytopal complex in some Euclidean space. This was shown to be true for $\nu$-Tamari lattices in~\cite{ceballos_geometry_2019}, where polytopal complex realizations induced by some arrangements of tropical hyperplanes are provided. Similar geometric realizations in this general context for alt~$\nu$-Tamari lattices will be presented in forthcoming work~\cite{ceballos_altnuAssociahedra_2023}.
		
		\section{The $\nu$-Dyck lattice}
		
		Let $\nu$ be a lattice path on the plane from $(0,0)$ to $(m,n)$, consisting of a finite number of north and east unit steps.
		We may represent a path $\nu$ as a word in the letters $ E $ and $ N $ for east and north steps respectively.
		We may as well represent $\nu$ as a sequence of non negative integers $ (\nu_0, \nu_1, \dots, \nu_n) $, where $ n \in \mathbb{N} $ is the number of north steps of $\nu$, $ \nu_0 $ is the number of initial east steps, and $ \nu_i \geq 0$ is the number of consecutive east steps immediately following the $ i $-th north step of $\nu$.
		In particular, $m=\nu_0+\dots+\nu_n$ is the total number of east steps.  
		For instance, the path $ ENEENNENEEE $ would correspond to the sequence $ (1,2,0,1,3) $, while $ENEENN$ corresponds to (1,2,0,0).
		
		A \emph{$\nu$-path} $\mu$ is a lattice path using north and east steps, with the same endpoints as $\nu$, that is weakly above $\nu$.
		Alternatively, $\mu = (\mu_0, \dots, \mu_n)$ is a $\nu$-path if and only if $\sum_{i = 0}^{j} \mu_i \leq \sum_{i = 0}^{j} \nu_i $ for all $ 0 \leq j \leq n $, with equality for $j=n$.
		The elements of the posets in 
		Figure~\ref{fig_nuTamari_nDyck_ENEENN} and Figure~\ref{fig_alt_nu_tamari_ENEENN_delta_one} are labelled by $\nu$-paths using this representation, were we omit the commas and parentheses for simplicity. For instance, the label 1200 is the minimal  path $(1,2,0,0)$, which corresponds to $\nu=ENEENN$.   
		
		\begin{definition}
			The \emph{$\nu$-Dyck lattice} $ \Dyck{\nu} $ is the poset on $\nu$-paths where $ P \leq Q $ if $ Q $ is weakly above~$ P $.
		\end{definition}
		
		An example of the $\nu$-Dyck lattice for $\nu=ENEEN$ is illustrated on the left of~\Cref{fig_altnu_lattices_ENEEN_paths}.

		\begin{figure}[htb]
			\begin{center}
				\begin{tikzpicture}%
	[scale=1.1,
	x={(1cm, 0cm)},
	y={(0cm, 1cm)},
	back/.style={loosely dotted, thin},
	edge/.style={color=blue!95!black, thick},
	facet/.style={fill=red!95!black,fill opacity=0.800000},
	vertex/.style={inner sep=1pt,circle,draw=green!25!black,fill=green!75!black,thick,anchor=base}]
	\scriptsize
	
	\coordinate (c120) at (0,0);
	\coordinate (c030) at (-1,1);
	\coordinate (c021) at (0,2);
	\coordinate (c012) at (1,3);
	\coordinate (c003) at (2,4);
	
	\coordinate (c111) at (1,1);
	\coordinate (c102) at (2,2);
	\node (n120) at (c120) {\nuPath{0,1,1,2,1}{0,1,1,2}{}{.3}{}{}{}};
	% \node (n120) at (c120) {120};
	\node (n030) at (c030) {\nuPath{0,1,1,2,1}{1,1,1,2}{}{.3}{}{}{}};
	\node (n021) at (c021) {\nuPath{0,1,1,2,1}{1,1,2,2}{}{.3}{}{}{}};
	\node (n012) at (c012) {\nuPath{0,1,1,2,1}{1,2,2,2}{}{.3}{}{}{}};
	\node (n003) at (c003) {\nuPath{0,1,1,2,1}{2,2,2,2}{}{.3}{}{}{}};
	
	\node (n111) at (c111) {\nuPath{0,1,1,2,1}{0,1,2,2}{}{.3}{}{}{}};
	\node (n102) at (c102) {\nuPath{0,1,1,2,1}{0,2,2,2}{}{.3}{}{}{}};
	
	\draw[edge,->] (n120) -- (n030); 
	\draw[edge,->] (n030) -- (n021); 
	\draw[edge,->] (n021) -- (n012); 
	\draw[edge,->] (n012) -- (n003); 
	
	\draw[edge,->] (n120) -- (n111);
	\draw[edge,->] (n111) -- (n021);
	\draw[edge,->] (n111) -- (n102);
	
	\draw[edge,->] (n102) -- (n012);
\end{tikzpicture}
				\begin{tikzpicture}%
	[scale=1.1,
	x={(1cm, 0cm)},
	y={(0cm, 1cm)},
	back/.style={loosely dotted, thin},
	edge/.style={color=blue!95!black, thick},
	facet/.style={fill=red!95!black,fill opacity=0.800000},
	vertex/.style={inner sep=1pt,circle,draw=green!25!black,fill=green!75!black,thick,anchor=base}]
	\scriptsize
	
	\coordinate (c120) at (0,0);
	\coordinate (c030) at (-1,1);
	\coordinate (c021) at (0,2);
	\coordinate (c012) at (0,3);
	\coordinate (c003) at (1,4);
	
	\coordinate (c111) at (1,1);
	\coordinate (c102) at (2.5,2.5);
	
	\node (n120) at (c120) {\nuPath{0,1,1,2,1}{0,1,1,2}{}{.3}{}{}{}};
	% \node (n120) at (c120) {120};
	\node (n030) at (c030) {\nuPath{0,1,1,2,1}{1,1,1,2}{}{.3}{}{}{}};
	\node (n021) at (c021) {\nuPath{0,1,1,2,1}{1,1,2,2}{}{.3}{}{}{}};
	\node (n012) at (c012) {\nuPath{0,1,1,2,1}{1,2,2,2}{}{.3}{}{}{}};
	\node (n003) at (c003) {\nuPath{0,1,1,2,1}{2,2,2,2}{}{.3}{}{}{}};
	
	\node (n111) at (c111) {\nuPath{0,1,1,2,1}{0,1,2,2}{}{.3}{}{}{}};
	\node (n102) at (c102) {\nuPath{0,1,1,2,1}{0,2,2,2}{}{.3}{}{}{}};
	
	\draw[edge,->] (n120) -- (n030); 
	\draw[edge,->] (n030) -- (n021); 
	\draw[edge,->] (n021) -- (n012); 
	\draw[edge,->] (n012) -- (n003); 
	
	\draw[edge,->] (n120) -- (n111);
	\draw[edge,->] (n111) -- (n102);
	\draw[edge,->] (n102) -- (n003);
	
	\draw[edge,->] (n111) -- (n021);
\end{tikzpicture}
				\begin{tikzpicture}%
	[scale=1.1,
	x={(1cm, 0cm)},
	y={(0cm, 1cm)},
	back/.style={loosely dotted, thin},
	edge/.style={color=blue!95!black, thick},
	facet/.style={fill=red!95!black,fill opacity=0.800000},
	vertex/.style={inner sep=1pt,circle,draw=green!25!black,fill=green!75!black,thick,anchor=base}]
	\scriptsize
	
	\coordinate (c120) at (0,0);
	\coordinate (c030) at (-1,1);
	\coordinate (c021) at (-1,2);
	\coordinate (c012) at (0,3);
	\coordinate (c003) at (1,4);
	
	\coordinate (c111) at (1.5,1.5);
	\coordinate (c102) at (2.5,2.5);
	
	\node (n120) at (c120) {\nuPath{0,1,1,2,1}{0,1,1,2}{}{.3}{}{}{}};
	% \node (n120) at (c120) {120};
	\node (n030) at (c030) {\nuPath{0,1,1,2,1}{1,1,1,2}{}{.3}{}{}{}};
	\node (n021) at (c021) {\nuPath{0,1,1,2,1}{1,1,2,2}{}{.3}{}{}{}};
	\node (n012) at (c012) {\nuPath{0,1,1,2,1}{1,2,2,2}{}{.3}{}{}{}};
	\node (n003) at (c003) {\nuPath{0,1,1,2,1}{2,2,2,2}{}{.3}{}{}{}};
	
	\node (n111) at (c111) {\nuPath{0,1,1,2,1}{0,1,2,2}{}{.3}{}{}{}};
	\node (n102) at (c102) {\nuPath{0,1,1,2,1}{0,2,2,2}{}{.3}{}{}{}};
	
	\draw[edge,->] (n120) -- (n030); 
	\draw[edge,->] (n030) -- (n021); 
	\draw[edge,->] (n021) -- (n012); 
	\draw[edge,->] (n012) -- (n003); 
	
	\draw[edge,->] (n120) -- (n111);
	\draw[edge,->] (n111) -- (n102);
	\draw[edge,->] (n102) -- (n003);
	
	\draw[edge,->] (n111) -- (n012);
\end{tikzpicture}
			\end{center}
			\caption{Examples of alt $\nu$-Tamari lattices $\altTam{\nu}{\delta}$ for $\nu=ENEEN=(1,2,0)$. Left: the $\nu$-Dyck lattice, for $\delta=(0,0)$. Middle: the lattice for $\delta=(1,0)$. Right: the $\nu$-Tamari lattice, for $\delta=(2,0)$.  \\
				In each case, the number of linear intervals of length~$k$ is given by $\ell_k$ where \mbox{$\ell=(\ell_0,\ell_1,\ell_2,\ell_3)=(7,8,4,1)$}. For instance,~7 represents the trivial intervals of length~0, which are just the elements of each poset; there are 8 linear intervals of length 1, which correspond to the edges; 4 linear interval of length 2, and 1 linear interval of length 3.
			}
			\label{fig_altnu_lattices_ENEEN_paths}
		\end{figure}
		
		\begin{remark}
			The case where $ \nu $ is $ (NE)^n $ coincides with the classical Dyck lattice on Dyck paths of size $ n $.%, also known as the Stanley lattice.
		\end{remark}
		
		\begin{remark}
			Covering relations $ P \lessdot Q $ in the $\nu$-Dyck lattice consist of transforming a valley $ EN $ in a peak $ NE $ in some $\nu$-path $ P $.
		\end{remark}
		
		\subsection{Left and right intervals in the $\nu$-Dyck lattice}
		
		We focus on the special class of linear intervals of a poset. An interval~$ [P,Q] $ is \emph{linear} if it is totally ordered, or equivalently if it is a chain of the form~\mbox{$ P = P_0 <P_1 < \dots < P_\ell = Q $}. The \emph{length} of such a linear interval is defined to be~$\ell$. A linear interval of length zero (containing only one element) is said to be \emph{trivial}. A linear interval of length one is by definition a covering relation since it contains two elements.
		The non trivial linear intervals of the $\nu$-Dyck lattice can be easily characterized into two different classes. 
		
		\begin{definition}
			An interval $ [P,Q] $ in $ \Dyck{\nu}  $ is a \emph{left interval} if $ Q $ is obtained from $ P $ by transforming a subpath $ E^\ell N $ into $ N E^\ell $ for some $ \ell \geq 1$.
			It is a \emph{right interval} if $ Q $ is obtained from $ P $ by transforming a subpath $ E N^\ell $ into $ N^\ell E $ for some $ \ell \geq 1$.
		\end{definition}
		%remark: only covering relations can be both left and right intervals

		\begin{lemma} \label{lem:Dyck-height-2}
			The linear intervals of length $ 2 $ are either left or right intervals.
		\end{lemma}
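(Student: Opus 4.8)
The plan is to work with the sequence representation $\mu=(\mu_0,\dots,\mu_n)$ and to record, for each path $\mu$ and each $0\le j\le n$, the partial sum $S_j(\mu)=\sum_{i=0}^{j}\mu_i$. By the characterization of $\nu$-paths recalled above, $P\le Q$ holds exactly when $S_j(Q)\le S_j(P)$ for all $j$. A single cover, which by the preceding remark turns a valley $EN$ into a peak $NE$, corresponds to choosing a north step $j+1$ with $\mu_j\ge 1$ and replacing $(\mu_j,\mu_{j+1})$ by $(\mu_j-1,\mu_{j+1}+1)$; in terms of partial sums this decreases $S_j$ by $1$ and leaves every other $S_k$ unchanged. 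In particular each cover lowers $\sum_k S_k$ by exactly one, so the poset is graded and every maximal chain in $[P,Q]$ has length $\sum_j d_j$, where $d_j:=S_j(P)-S_j(Q)\ge 0$. Since our interval has length $2$, this forces $\sum_j d_j=2$, so either a single $d_{j_0}=2$, or two indices $j_0<j_1$ satisfy $d_{j_0}=d_{j_1}=1$ with all other $d$'s vanishing.

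Next I would use the elementary fact that, in a graded poset, an interval of length $2$ is a chain if and only if its bottom element $P$ has a unique cover inside the interval (a unique cover $P_1$ forces every other element to lie in $[P_1,Q]$, leaving only $P,P_1,Q$). A flip at north step $j+1$ is a cover of $P$ lying in $[P,Q]$ precisely when $\mu_j(P)\ge 1$ (so the flip is possible) and $d_j\ge 1$ (so the result still lies weakly below $Q$). Thus linearity becomes the purely arithmetic statement that exactly one index $j$ with $d_j\ge1$ also satisfies $\mu_j(P)\ge1$.

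I would then run the case analysis, using repeatedly that $\mu_j(P)-\mu_j(Q)=d_j-d_{j-1}$. If $d_{j_0}=2$, the only candidate is $j_0$, and from $\mu_{j_0}(Q)=\mu_{j_0}(P)-2\ge0$ we get $\mu_{j_0}(P)\ge2$, so $P$ has a unique cover and the interval is linear; locally $P$ contains $E^2N$ (the last two east steps of the block $E^{\mu_{j_0}}$ followed by the $(j_0{+}1)$-th north step) while $Q$ contains $NE^2$, i.e.\ this is a left interval. If instead $d_{j_0}=d_{j_1}=1$ with $j_0<j_1$: when $j_1\ge j_0+2$ the identity gives $\mu_{j_0}(P)=\mu_{j_0}(Q)+1\ge1$ and $\mu_{j_1}(P)=\mu_{j_1}(Q)+1\ge1$, so $P$ has two covers in $[P,Q]$ and the interval is not linear, excluding these configurations. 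When $j_1=j_0+1$ the two flips overlap: one still has $\mu_{j_0}(P)\ge1$ always, while $\mu_{j_0+1}$ is unchanged from $P$ to $Q$ (its two $d$-contributions cancel), so linearity forces $\mu_{j_0+1}(P)=0$; in that situation $P$ locally contains $EN^2$ and $Q$ contains $N^2E$, i.e.\ this is a right interval.

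The only genuinely delicate point is the overlap case $j_1=j_0+1$, where the two elementary flips do not act on disjoint parts of the path: there one must verify carefully that the order of the two flips can be reversed exactly when the middle block $\mu_{j_0+1}(P)$ is nonempty (producing a non-linear diamond) and is forced otherwise (producing the right interval). The remaining verifications are routine bookkeeping with the partial sums $S_j$. Assembling the cases shows that every length-$2$ linear interval is either a left or a right interval, as claimed.
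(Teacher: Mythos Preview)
Your proof is correct, but it takes a different route from the paper's. The paper argues directly in terms of valleys and peaks: given a chain $P\lessdot Q\lessdot R$, the first cover creates a new peak $NE$ in $Q$; if the second cover $Q\lessdot R$ occurs at a valley of $Q$ that was already a valley of $P$, the two covers commute and $[P,R]$ is a square, contradicting linearity. Hence the second cover must involve one of the two steps of the newly created peak, which immediately yields a left interval (if the $N$ is reused) or a right interval (if the $E$ is reused).

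Your argument is instead arithmetic: you encode paths by partial sums $S_j$, observe that each cover decreases a single $S_j$ by one (so the poset is graded), translate linearity of a length-$2$ interval into the condition that $P$ has a unique cover in $[P,Q]$, and then run a case analysis on the difference vector $(d_j)$. This is more explicit and entirely self-contained---in particular you establish the grading and give a clean numerical criterion for linearity---at the cost of being longer and somewhat more computational. The paper's argument is shorter and more geometric, and its ``the second cover must reuse the new peak'' viewpoint is what later transports to the $\nu$-tree setting in Proposition~\ref{prop_left_right_vTam}. Either approach is perfectly adequate for the Dyck case.
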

		
		\begin{proof}
			Let $ P  \lessdot Q  \lessdot R$ be a linear interval of length $ 2 $.
			The covering relations $ P \lessdot Q $ transforms a valley $ EN $ of $ P $ into a peak $ NE $.
			If the next covering relation $ Q \lessdot R $ happens at a valley of $ Q $ that is also a valley of $ P $, then the $ [P,R] $ is a square.
			Thus, this second covering relation must use either of the two steps of the peak $ NE $ that was created in $ Q $.
		\end{proof}
		
		\begin{proposition} \label{prop:linear_in_Dyck}
			The left and right intervals in the previous definition are linear intervals of length~$\ell$. Moreover,  all non trivial linear intervals in $ \Dyck{\nu}  $ are either left or right intervals.
		\end{proposition}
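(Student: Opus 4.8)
The statement has two directions. For the forward direction I would show directly that a left interval is a saturated chain. Suppose $Q$ is obtained from $P$ by replacing a subpath $E^\ell N$ by $NE^\ell$, and write $P = A\,E^\ell N\,B$ and $Q = A\,N E^\ell\,B$, where $A$ and $B$ are the common prefix and suffix. For $0\le k\le \ell$ set $P_k := A\,E^{\ell-k} N E^{k}\,B$, the path in which the lone north step of the active block sits after $\ell-k$ east steps; then $P_0 = P$, $P_\ell = Q$, and each $P_k\lessdot P_{k+1}$ is a cover relation, moving the block's single north step one step to the left (i.e.\ turning a valley $EN$ into a peak $NE$). This exhibits a chain of length $\ell$ from $P$ to $Q$. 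To see that the interval contains nothing else — hence is linear of length exactly $\ell$ — I would argue that any $\mu$ with $P\le\mu\le Q$ must coincide with $P$ (and $Q$) outside the active block, because $P$ and $Q$ agree there and $\mu$ is squeezed between them; inside the block $\mu$ is then a monotone lattice path with fixed endpoints built from $\ell$ east steps and a single north step, so $\mu=P_k$ for some $k$. The right-interval case is identical after replacing ``$E^\ell N \to NE^\ell$'' by ``$EN^\ell \to N^\ell E$'' and letting the single east step travel through the block of north steps.

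For the converse I would induct on the length, using \Cref{lem:Dyck-height-2} as the engine. Length $1$ intervals are cover relations, which are left (and right) intervals with $\ell=1$. For a linear interval $[P,Q]$ with chain $P = P_0 \lessdot P_1 \lessdot \cdots \lessdot P_\ell = Q$ and $\ell\ge 2$, the sub-chain $P_0\lessdot P_1 \lessdot P_2$ is a linear interval of length $2$, so by \Cref{lem:Dyck-height-2} it is a left or a right interval. The anti-diagonal symmetry of $\nu$-paths (reverse the word and exchange $E\leftrightarrow N$) is a poset isomorphism $\Dyck{\nu}\to\Dyck{\nu'}$ that interchanges left and right intervals, so it suffices to treat the case where $P_0\lessdot P_1\lessdot P_2$ is a left interval; the right case follows by applying the conclusion to the image of $[P,Q]$. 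I would then prove by induction on $i$ that $[P_0,P_i]$ is a left interval for every $2\le i\le \ell$, which at $i=\ell$ gives the claim.

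The heart of the argument is the induction step, and it is where I expect the only real subtlety. Assume $[P_0,P_i]$ is a left interval; by the forward direction it is a chain, so $P_i = \alpha\,N E^{i}\,\beta$ and the previous cover $P_{i-1}\lessdot P_i$ created the peak $NE$ at the very start of the block (its north step is the block's leading $N$, and its east step is the block's first $E$, which — crucially, since $i\ge 2$ — is immediately followed by a second $E$ of the block). Applying \Cref{lem:Dyck-height-2} to $P_{i-1}\lessdot P_i\lessdot P_{i+1}$, the cover $P_i\lessdot P_{i+1}$ must transform a valley $EN$ that uses one of the two steps of this peak. It cannot use the peak's east step, since that $E$ is followed by an $E$ and so begins no valley; hence it uses the peak's north step, which forces $\alpha$ to end in $E$, say $\alpha=\alpha' E$, and produces $P_{i+1} = \alpha' N E^{i+1}\beta$. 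As $P_0 = \alpha E^{i} N\beta = \alpha' E^{i+1} N \beta$, the interval $[P_0,P_{i+1}]$ transforms $E^{i+1}N$ into $NE^{i+1}$ and is therefore a left interval, completing the induction. The main obstacle is precisely pinning down that the active block can only grow to the left and never switches type; this is exactly what the ``followed by a second $E$'' observation rules out, using that $i\ge 2$ forbids the competing right-interval continuation.
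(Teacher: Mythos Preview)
Your proof is correct and follows essentially the same strategy as the paper: both use \Cref{lem:Dyck-height-2} and induction on the length, analyzing which step of the freshly created peak the next cover must use (and ruling out the east step because it is followed by another east step). The minor differences are cosmetic: you induct upward from $[P_0,P_2]$ while the paper inducts downward via the unique lower cover $Q'\lessdot Q$; you invoke the anti-diagonal symmetry to reduce to the left case while the paper treats left and right separately; and for the forward direction you enumerate the elements of the interval explicitly, whereas the paper argues uniqueness of the maximal chain from the fact that $P$ has a single valley not shared by $Q$.
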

		
		\begin{proof}
			If $ [P,Q] $ is an interval of this form with $ \ell \geq 1 $, then there exists only one maximal chain from $ P $ to $ Q $.
			Indeed, there is only one valley of $ P $ that is not a valley of $ Q $ and thus, any maximal chain from $ P $ to $ Q $ starts at this valley.
			We then obtain an interval of the same form, but $ \ell $ has decreased by~$ 1 $ and we conclude by induction.
			
			\medskip
			
			The \Cref{lem:Dyck-height-2} proves that all linear intervals of height $ k = 2 $ are either left or right intervals.
			Suppose that $ [P,Q] $ is a linear interval of height $ k+1 \geq 3 $.
			It is linear so $ Q $ has only one lower cover $ Q' $ in $ [P,Q] $. 
			Then $ [P,Q'] $ is linear of height $ k $ and thus by induction, it is of the prescribed form.
			
			Suppose that $ Q' $ is obtained from $ P $ by transforming a subpath $ E^k N $ into $ N E^k $, which creates this peak $ NE $ in $ Q' $ followed by $ k-1 $ east steps.
			Then, \cref{lem:Dyck-height-2} ensures that the covering relation $ Q' \lessdot Q $ has to use the north step $ N $ of this peak and thus, $ Q $ is obtained from $ P $ by changing a subpath $ E^{k+1} N $ into $ N E^{k+1} $.
			
			Suppose now that  $ Q' $ is obtained from $ P $ by transforming a subpath $ E N^k $ into $ N^k E $.
			Similarly, \cref{lem:Dyck-height-2} ensures that $ Q' \lessdot Q $ has to use the east step $ E $ of this peak and $ Q $ is obtained from $ P $ by transforming a subpath $ E N^{k+1} $ into $ N^{k+1} E $.
		\end{proof}
		
		\begin{corollary}
			Left intervals of length $\ell$ in $\Dyck{\nu}$ are in bijection with $\nu$-paths marked at a north step preceded by $ \ell $ east steps.
			Right intervals of length $\ell$ in $\Dyck{\nu}$ are in bijection with $\nu$-paths marked at an east step followed by $ \ell $ north steps.
		\end{corollary}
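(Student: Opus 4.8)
The plan is to read off the bijection directly from the structural description of left and right intervals provided by \Cref{prop:linear_in_Dyck}. That proposition tells us that a left interval of length $\ell$ is precisely an interval $[P,Q]$ in which $Q$ is obtained from $P$ by replacing one occurrence of $E^\ell N$ by $NE^\ell$, and dually for right intervals with $EN^\ell \to N^\ell E$. The idea is to encode such an interval by remembering only its bottom element $P$ together with the single step that is moved during the transformation: for a left interval this is the north step $N$ at the right end of the block $E^\ell N$, which in $P$ is immediately preceded by $\ell$ east steps; for a right interval it is the east step $E$ at the left end of the block $EN^\ell$, which in $P$ is immediately followed by $\ell$ north steps. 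I would define $\Phi\colon [P,Q] \mapsto (P, N)$ for left intervals and the analogous map for right intervals, and exhibit explicit inverses.

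First I would check that $\Phi$ is well defined and lands in the claimed set: since $Q$ lies weakly above $P$ and $P$ lies weakly above $\nu$, the path $Q$ is again a $\nu$-path, and by construction the marked north step of $P$ is preceded by $\ell$ east steps, so $(P,N)$ is a valid marked $\nu$-path. For the inverse, I would start from a $\nu$-path $P$ with a marked north step $N$ preceded by $\ell$ east steps, transform the block $E^\ell N$ ending at $N$ into $NE^\ell$ to produce $Q$, and invoke \Cref{prop:linear_in_Dyck} to conclude that $[P,Q]$ is indeed a left interval of length $\ell$. The two maps are mutually inverse because $[P,Q]$ determines the triple $(P, N, \ell)$ uniquely: the paths $P$ and $Q$ differ in exactly one block, which locates both the moved step and the value $\ell$, and conversely rebuilding $Q$ from $(P,N,\ell)$ returns the original interval.

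The right-interval statement follows by the same argument applied to the dual transformation $EN^\ell \to N^\ell E$, marking the east step that is moved; here the relevant feature of $P$ is an east step immediately followed by $\ell$ north steps. The one point that requires care --- and which I would flag as the main subtlety rather than a genuine obstacle --- is the reading of the phrase ``preceded by $\ell$ east steps'': it must be interpreted as ``the $\ell$ steps immediately before $N$ are all east steps'' (equivalently, at least $\ell$ east steps precede $N$), and not as ``exactly $\ell$.'' Indeed a north step with, say, two east steps immediately before it is the moved step of both a length-$1$ and a length-$2$ left interval with the same bottom path, so for each fixed $\ell$ one must count such a north step once. Granting this reading, \Cref{prop:linear_in_Dyck} guarantees that the transformation of a possibly non-maximal block $E^\ell N$ still yields a linear interval of length exactly $\ell$, so no over- or under-counting occurs and the correspondence is a genuine bijection.
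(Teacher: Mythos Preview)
Your proposal is correct and follows exactly the approach the paper intends: the corollary is stated in the paper without proof, as an immediate consequence of \Cref{prop:linear_in_Dyck}, and you have simply spelled out why the map $[P,Q]\mapsto (P,N)$ (and its right analogue) is a bijection. Your remark that ``preceded by $\ell$ east steps'' must be read as ``the $\ell$ steps immediately before $N$ are east steps'' (i.e.\ at least $\ell$), rather than ``exactly $\ell$,'' is the only point requiring care, and you handle it correctly.
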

		% formula for the counting as a huge sum?
		% is there a general formula for the number of nu-paths?

		\section{The $\nu$-Tamari lattice}
		
		The $\nu$-Tamari lattices are a generalization of the Tamari lattice.
		They were defined in terms of $\nu$-paths by Préville-Ratelle and Viennot in~\cite{preville_nu_tamari_2017}. An alternative description in terms of $\nu$-trees was presented in~\cite{ceballos_nu_trees_2020}. 
		
		\subsection{On $\nu$-paths}	
		For a lattice point $ p $ on a $\nu$-path $\mu$, define its $\nu$-altitude $ \alt_\nu(p) $ to be the maximum number of horizontal steps that can be added to the right of $ p $ without crossing $\nu$.
		Given a valley $ EN $ of $\mu$, let $ p $ be the lattice point between the east and north steps.
		Let $ q $ be the next lattice point of $ \mu $ such that $ \alt_\nu(q) =  \alt_\nu(p) $, and $ \mu_{[p,q]} $ be the subpath of $\mu$ that starts at $ p $ and ends at $ q $.
		Let $\mu'$ be the path obtained from $\mu$ by switching $ \mu_{[p,q]} $ with the east step $E$ that precedes it.
		The $\nu$-rotation of~$\mu$ at the valley $ p $ is defined to be $ \mu \lessdot_\nu \mu' $. An example is illustrated in Figure~\ref{fig_nu_path_rotation}.
		
		\begin{figure}
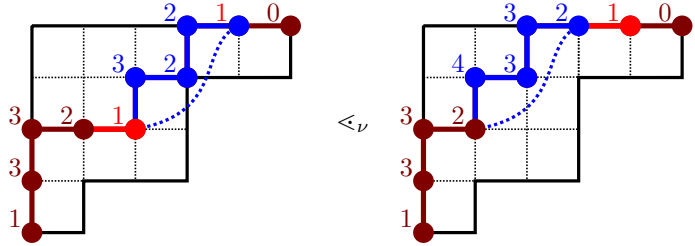

			%	\centering
			%	\input{Figures/nu_path_rotation}
			% %old version
			\begin{center}
				\centering
				\begin{minipage}[c]{.35\linewidth}
					\hfill
					\def\svgwidth{.7\linewidth}
					\import{Figures/}{nu-rotation-bot.pdf_tex}
				\end{minipage}
				$\quad \lessdot_{\nu} \quad$
				\begin{minipage}[c]{.35\linewidth}
					\def\svgwidth{.7\linewidth}
					\import{Figures/}{nu-rotation-top.pdf_tex}
					\hfill
				\end{minipage}
			\end{center}
			\caption{The rotation operation of a $\nu$-path. Each node is labelled with its $\nu$-altitude.}
			\label{fig_nu_path_rotation}
		\end{figure}

		\begin{definition}
			The $\nu$-Tamari poset $\Tam{\nu}$ is the reflexive transitive closure of $\nu$-rotations on $\nu$-paths.
		\end{definition}
		
		An example of the $\nu$-Tamari poset for $\nu=ENEEN$ is illustrated on the right of~\Cref{fig_altnu_lattices_ENEEN_paths}. 
		
		\begin{theorem}[{\cite[Theorem~1]{preville_nu_tamari_2017}}]
			%\begin{theorem}[{\cite{preville_nu_tamari_2017}}]
			The $\nu$-Tamari poset is a lattice. The $\nu$-rotations are exactly its covering relations. 
		\end{theorem}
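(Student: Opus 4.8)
My plan is to deduce both assertions at once by realizing $\Tam{\nu}$ as an interval inside a classical Tamari lattice, which is the approach of Préville-Ratelle and Viennot. Before anything else I would confirm that $\Tam{\nu}$ is genuinely a poset. A $\nu$-rotation at a valley $p$ merely delays the east step $E$ preceding the subpath $\mu_{[p,q]}$, so the new path $\mu'$ lies weakly above $\mu$; hence every rotation $\mu \lessdot_\nu \mu'$ is a strict relation $\mu < \mu'$ in the $\nu$-Dyck order $\Dyck{\nu}$. Since $\Dyck{\nu}$ is already a genuine partial order, the rotation relation is acyclic, so its reflexive–transitive closure $\Tam{\nu}$ is antisymmetric and is in fact a subrelation of $\Dyck{\nu}$. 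One checks directly that $\nu$ is the minimum and $N^nE^m$ the maximum.

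The core construction is an injection $\Phi$ from $\nu$-paths to Dyck paths (equivalently, lattice paths weakly above the staircase) of semilength $m+n$, obtained by interleaving the steps of $\mu$ with a fixed template recording the steps of $\nu$ itself, arranged so that the $\nu$-altitude $\alt_\nu$ of each lattice point of $\mu$ becomes the ordinary height of the corresponding point of $\Phi(\mu)$. The image of $\Phi$ should be exactly the order-convex set $[\Phi(\nu),\Phi(N^nE^m)]$, a closed interval of the classical Tamari lattice $\mathcal{T}_{m+n}$, whose lattice property and whose description of covers by rotations I take as the classical theorem of Tamari.

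The main obstacle is verifying that $\Phi$ intertwines the two rotation operations. The definition of the $\nu$-rotation through \emph{the next point $q$ with $\alt_\nu(q)=\alt_\nu(p)$} is tailored precisely so that, after applying $\Phi$, the block $\mu_{[p,q]}$ becomes exactly the shortest subpath returning to the starting height past which the classical Tamari rotation slides the relevant east step. Establishing that $\Phi$ preserves the altitude/height profile and carries $\nu$-rotations bijectively onto the classical rotations that lie inside the interval is the heart of the proof, and it requires a careful analysis of how the altitude profile is affected by a rotation; this is the step I expect to be genuinely delicate.

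Granting this correspondence, both conclusions follow formally. In any finite lattice an interval $[a,b]$ is a sublattice, since for $x,y\in[a,b]$ one has $a\le x\wedge y$ and $x\vee y\le b$; transporting this structure back through $\Phi$ shows that $\Tam{\nu}$ is a lattice. Moreover, because $[a,b]$ is order-convex, its covering relations are exactly the covering relations of $\mathcal{T}_{m+n}$ between elements of the interval, namely the classical Tamari rotations, and these pull back under $\Phi^{-1}$ to precisely the $\nu$-rotations. Hence the $\nu$-rotations are exactly the covering relations of $\Tam{\nu}$. (Alternatively, one could bypass the embedding and prove the lattice property directly by exhibiting joins together with the bottom element $\nu$, but the interval realization is cleaner and simultaneously yields the statement about covers.)
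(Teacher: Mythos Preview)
The paper does not give its own proof of this theorem: it is stated with attribution to Pr\'eville-Ratelle and Viennot~\cite{preville_nu_tamari_2017} and then used as a black box. Your sketch follows essentially the strategy of that reference, namely realizing $\Tam{\nu}$ as an interval of a larger (classical) Tamari lattice via a height-preserving encoding, and then pulling back the lattice structure and the cover relations; so, up to the details you yourself flag as delicate (the precise construction of $\Phi$ and the verification that it intertwines $\nu$-rotations with classical Tamari covers), your approach agrees with the cited source that the paper relies on.
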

		
		Another approach to define the $\nu$-Tamari lattice is to introduce the notion of \emph{$\nu$-elevation} of a subpath as the difference of $\nu$-altitude between its ending point and its starting point.    
		We thus write $ \elev_\nu(E) = -1 $ for an east step $E$ and $\elev_\nu(N_i) = \nu_i$ if $ N_i $ is the $ i $-th north step of a $\nu$-path $\mu$.
		For any subpath $ A $ of $\mu$, we then have $\elev_\nu(A) = \sum_{a \in A} \elev_\nu(a)$ as the sum of the $\nu$-elevation of the steps of $A$.
		
		The \emph{$\nu$-excursion} of a north step $ N $ of a $\nu$-path $\mu$ is defined as the shortest subpath~$ A $ of $ \mu $ that starts with this $ N $ and such that $ \elev_\nu(A) = 0 $.
		It follows from the definition of the $\nu$-excursion that exchanging the east step $ E $ of a valley with the $\nu$-excursion that follows it is exactly a covering relation in $ \Tam{\nu} $.
		
		% This alternative description is useful to generalize the $\nu$-Tamari lattice in \Cref{sec_alt_nu_Tamari}. %Section~\ref{sec_alt_nu_Tamari}.
		
		\subsection{On $\nu$-trees}\label{sec_nu_trees}
		
		One can also define a poset on $\nu$-trees which is isomorphic to the $\nu$-Tamari lattice.
		
		We denote by $F_{\nu}$ the Ferrers diagram that lies weakly above $\nu$ in the smallest rectangle containing~$\nu$.  Let~$L_{\nu}$ denote the set of lattice points inside $F_{\nu}$. 
		We say that two points~$p,q\in L_{\nu}$ are \empty{$\nu$-incompatible} if $p$ is strictly southwest or strictly northeast of $q$, and the smallest rectangle containing $p$ and $q$ lies entirely in~$F_{\nu}$.  Otherwise, $p$ and $q$ are said to be \empty{$\nu$-compatible}. 
		A \emph{$\nu$-tree} is a maximal collection of pairwise $\nu$-compatible elements in $L_\nu$. 
		In particular, the vertex at the top-left corner of $F_\nu$ is $\nu$-compatible with everyone else, and belongs to every $\nu$-tree. Connecting two consecutive  elements (not necessarily at distance $1$) in the same row or column allows us to visualize $\nu$-trees as classical rooted binary trees~\cite{ceballos_nu_trees_2020}. The vertex at top-left corner of $F_\nu$ is always the \emph{root}. An example of a $\nu$-tree and the rotation operation which we now describe is shown in \Cref{fig_nu_tree_rotation}.% Figure~\ref{fig_nu_tree_rotation}.
		
		\begin{figure}
			%	\centering
			%	\input{Figures/nu_tree_rotation}
			% %old version
		
		\begin{center}
			\centering
			\begin{minipage}[c]{.4\linewidth}
				\hfill
				\def\svgwidth{.7\linewidth}
				%% Creator: Inkscape 1.2.2 (b0a8486541, 2022-12-01), www.inkscape.org
%% PDF/EPS/PS + LaTeX output extension by Johan Engelen, 2010
%% Accompanies image file 'nu-tree-rotation-bot.pdf' (pdf, eps, ps)
%%
%% To include the image in your LaTeX document, write
%%   \input{<filename>.pdf_tex}
%%  instead of
%%   \includegraphics{<filename>.pdf}
%% To scale the image, write
%%   \def\svgwidth{<desired width>}
%%   \input{<filename>.pdf_tex}
%%  instead of
%%   \includegraphics[width=<desired width>]{<filename>.pdf}
%%
%% Images with a different path to the parent latex file can
%% be accessed with the `import' package (which may need to be
%% installed) using
%%   \usepackage{import}
%% in the preamble, and then including the image with
%%   \import{<path to file>}{<filename>.pdf_tex}
%% Alternatively, one can specify
%%   \graphicspath{{<path to file>/}}
%% 
%% For more information, please see info/svg-inkscape on CTAN:
%%   http://tug.ctan.org/tex-archive/info/svg-inkscape
%%
\begingroup%
  \makeatletter%
  \providecommand\color[2][]{%
    \errmessage{(Inkscape) Color is used for the text in Inkscape, but the package 'color.sty' is not loaded}%
    \renewcommand\color[2][]{}%
  }%
  \providecommand\transparent[1]{%
    \errmessage{(Inkscape) Transparency is used (non-zero) for the text in Inkscape, but the package 'transparent.sty' is not loaded}%
    \renewcommand\transparent[1]{}%
  }%
  \providecommand\rotatebox[2]{#2}%
  \newcommand*\fsize{\dimexpr\f@size pt\relax}%
  \newcommand*\lineheight[1]{\fontsize{\fsize}{#1\fsize}\selectfont}%
  \ifx\svgwidth\undefined%
    \setlength{\unitlength}{45.45938747bp}%
    \ifx\svgscale\undefined%
      \relax%
    \else%
      \setlength{\unitlength}{\unitlength * \real{\svgscale}}%
    \fi%
  \else%
    \setlength{\unitlength}{\svgwidth}%
  \fi%
  \global\let\svgwidth\undefined%
  \global\let\svgscale\undefined%
  \makeatother%
  \begin{picture}(1,0.78177034)%
    \lineheight{1}%
    \setlength\tabcolsep{0pt}%
    \put(0,0){\includegraphics[width=\unitlength,page=1]{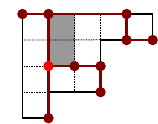}}%
    \put(0.27407806,0.72592261){\color[rgb]{0.50196078,0,0}\makebox(0,0)[rt]{\lineheight{1.25}\smash{\begin{tabular}[t]{r}$p$\end{tabular}}}}%
    \put(0.5050532,0.29696837){\color[rgb]{0.50196078,0,0}\makebox(0,0)[lt]{\lineheight{1.25}\smash{\begin{tabular}[t]{l}$r$\end{tabular}}}}%
    \put(0.27407805,0.29696837){\color[rgb]{1,0,0}\makebox(0,0)[rt]{\lineheight{1.25}\smash{\begin{tabular}[t]{r}$q=p\llcorner r$\end{tabular}}}}%
  \end{picture}%
\endgroup%

			\end{minipage}
			$\quad \lessdot_{\nu} \quad$
			\begin{minipage}[c]{.4\linewidth}
				\def\svgwidth{.7\linewidth}
				%% Creator: Inkscape 1.2.2 (b0a8486541, 2022-12-01), www.inkscape.org
%% PDF/EPS/PS + LaTeX output extension by Johan Engelen, 2010
%% Accompanies image file 'nu-tree-rotation-top.pdf' (pdf, eps, ps)
%%
%% To include the image in your LaTeX document, write
%%   \input{<filename>.pdf_tex}
%%  instead of
%%   \includegraphics{<filename>.pdf}
%% To scale the image, write
%%   \def\svgwidth{<desired width>}
%%   \input{<filename>.pdf_tex}
%%  instead of
%%   \includegraphics[width=<desired width>]{<filename>.pdf}
%%
%% Images with a different path to the parent latex file can
%% be accessed with the `import' package (which may need to be
%% installed) using
%%   \usepackage{import}
%% in the preamble, and then including the image with
%%   \import{<path to file>}{<filename>.pdf_tex}
%% Alternatively, one can specify
%%   \graphicspath{{<path to file>/}}
%% 
%% For more information, please see info/svg-inkscape on CTAN:
%%   http://tug.ctan.org/tex-archive/info/svg-inkscape
%%
\begingroup%
  \makeatletter%
  \providecommand\color[2][]{%
    \errmessage{(Inkscape) Color is used for the text in Inkscape, but the package 'color.sty' is not loaded}%
    \renewcommand\color[2][]{}%
  }%
  \providecommand\transparent[1]{%
    \errmessage{(Inkscape) Transparency is used (non-zero) for the text in Inkscape, but the package 'transparent.sty' is not loaded}%
    \renewcommand\transparent[1]{}%
  }%
  \providecommand\rotatebox[2]{#2}%
  \newcommand*\fsize{\dimexpr\f@size pt\relax}%
  \newcommand*\lineheight[1]{\fontsize{\fsize}{#1\fsize}\selectfont}%
  \ifx\svgwidth\undefined%
    \setlength{\unitlength}{45.38438632bp}%
    \ifx\svgscale\undefined%
      \relax%
    \else%
      \setlength{\unitlength}{\unitlength * \real{\svgscale}}%
    \fi%
  \else%
    \setlength{\unitlength}{\svgwidth}%
  \fi%
  \global\let\svgwidth\undefined%
  \global\let\svgscale\undefined%
  \makeatother%
  \begin{picture}(1,0.78306227)%
    \lineheight{1}%
    \setlength\tabcolsep{0pt}%
    \put(0,0){\includegraphics[width=\unitlength,page=1]{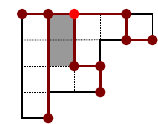}}%
    \put(0.50423531,0.72712225){\color[rgb]{1,0,0}\makebox(0,0)[lt]{\lineheight{1.25}\smash{\begin{tabular}[t]{l}$q'=p\urcorner r$\end{tabular}}}}%
    \put(0.27287847,0.72712225){\color[rgb]{0.50196078,0,0}\makebox(0,0)[rt]{\lineheight{1.25}\smash{\begin{tabular}[t]{r}$p$\end{tabular}}}}%
    \put(0.50423531,0.29745913){\color[rgb]{0.50196078,0,0}\makebox(0,0)[lt]{\lineheight{1.25}\smash{\begin{tabular}[t]{l}$r$\end{tabular}}}}%
  \end{picture}%
\endgroup%

				\hfill
			\end{minipage}
		\end{center}
			\caption{The rotation operation of a $\nu$-tree.}
			\label{fig_nu_tree_rotation}
		\end{figure}     
		
		Let $T$ be a $\nu$-tree and $p,r\in T$ be two elements which do not lie in the same row or same column. We denote by $p\square r$ the smallest rectangle containing $p$ and $r$, and write~$p\llcorner r$~(resp. $p\urcorner r$) for the lower left corner (resp. upper right corner) of $p\square r$.  
		
		Let $p,q,r\in T$ be such that $q=p\llcorner r$ and no other elements besides $p,q,r$ lie in $p\square r$. 
		The \emph{$\nu$-rotation} of $T$ at $q$ is defined as the set $T'=\bigl(T\setminus\{q\})\cup\{q'\}$, where $q'=p\urcorner r$, and we write $T \drot{\nu} T'$.  
		As proven in~\cite[Lemma~2.10]{ceballos_nu_trees_2020}, the rotation of a $\nu$-tree is also a $\nu$-tree. 
		
		\begin{definition}
			The \emph{rotation poset of $\nu$-trees} $\TamTrees{\nu}$ is the reflexive transitive closure of $\nu$-rotations.   
		\end{definition}
		
		An example of the rotation poset of $\nu$-trees for $\nu=ENEEN$ is illustrated on the right of~\Cref{fig_altnu_lattices_ENEEN_trees}.
		
		\begin{figure}[htb]
			\begin{center}
				%%a macro to try something
\newcommand{\customnuTree}[7]{
	%Parameter 1: bounding path!!!always starts with a north step
	%Parameter 2: nodes
	%Parameter 3: edges
	%Parameter 4: special coordinates
	%Parameter 5: scaling
	%Parameter 6: text
	%Parameter 7: background color
	%%
	\begin{tikzpicture}
		\def\dx{#5};
		\def\s{3*\dx};	%needed for scaling the nodes correctly; see fnode-style
		\setcounter{n}{0}
		\setcounter{m}{0}
		\setcounter{orient}{0}
		\setcounter{curx}{0}
		\setcounter{cury}{0}
		%%compute y-coordinate of endpoint of nu
		\foreach \step in {#1}{
			\ifthenelse{\equal{\theorient}{0}}{
				\addtocounter{n}{\step}
				\setcounter{orient}{1}
			}{
				\addtocounter{m}{\step}
				\setcounter{orient}{0}
			}
		}
		%%draw nu and background grid
		\begin{pgfonlayer}{background}
			\draw[\gray,dashed](0*\dx,1*\dx) -- (0*\dx,{(2*\dx});
			\draw[\gray,dashed](1*\dx,1*\dx) -- (1*\dx,{(2*\dx});
			\draw[\gray,dashed](2*\dx,0*\dx) -- (2*\dx,{(2*\dx});		
			\draw[\gray,dashed](3*\dx,0*\dx) -- (3*\dx,{(2*\dx});
			\draw[\gray,dashed](2*\dx,0*\dx) -- (3*\dx,{(0*\dx});
			\draw[\gray,dashed](0*\dx,1*\dx) -- (3*\dx,{(1*\dx});
			\draw[\gray,dashed](0*\dx,2*\dx) -- (3*\dx,{(2*\dx});
		\end{pgfonlayer}
		%		\setcounter{i}{0}
		%		\setcounter{orient}{0}
		%		\foreach \step in {#1}{
			%			\ifthenelse{\equal{\theorient}{0}}{
				%				\coordinate(q\thei) at (\thecurx*\dx,\thecury*\dx);
				%				\stepcounter{i}
				%				\coordinate(q\thei) at (\thecurx*\dx,{(\thecury+\step)*\dx});
				%				\stepcounter{i}
				%				\draw[\gray,dashed](\thecurx*\dx,\thecury*\dx) -- (\thecurx*\dx,{(\thecury+\step)*\dx});
				%				\begin{pgfonlayer}{background}
					%					\foreach \k in {1,...,\step}{
						%						\draw[\gray,dashed](0*\dx,{(\thecury+\k)*\dx}) -- (\thecurx*\dx,{(\thecury+\k)*\dx});
						%					}
					%				\end{pgfonlayer}
				%				\addtocounter{cury}{\step}
				%				\setcounter{orient}{1}
				%			}{
				%				\draw[\gray,dashed](\thecurx*\dx,\thecury*\dx) -- ({(\thecurx+\step)*\dx},\thecury*\dx);
				%				\begin{pgfonlayer}{background}
					%					\foreach \k in {0,...,\step}{
						%						\ifthenelse{\equal{\k}{\step}}{
							%						}{
							%							\draw[\gray,dashed]({(\thecurx+\k)*\dx},\thecury*\dx) -- ({(\thecurx+\k)*\dx},\then*\dx);
							%						}
						%					}
					%				\end{pgfonlayer}
				%				\addtocounter{curx}{\step}
				%				\setcounter{orient}{0}
				%			}
			%		}
		%%draw background color
		\ifthenelse{\equal{#7}{}}{}{
			\coordinate(zero) at (0*\dx,0*\dx);
			\coordinate(q\thei) at(0*\dx,\then*\dx);
			\addtocounter{i}{-1}
			\foreach \k in {0,2,...,\thei}{
				\pgfmathparse{\k+1}
				\let\res\pgfmathresult
				\begin{pgfonlayer}{middle}
					\fill[#7] (zero |- q\k) -- (q\k) -- (q\res) -- (zero |- q\res) -- cycle;
				\end{pgfonlayer}
			}
		}
		%%draw nodes
		\setcounter{i}{1}
		\foreach \a/\b/\c/\d in {#2}{
			\draw(\a*\dx,\b*\dx) node[fnode,draw=\c,fill=\c](p\thei){};
			\ifthenelse{\equal{\d}{d}}{
				\draw[orange!50!gray,decorate,decoration={snake,amplitude=.5,segment length=2.5}](p\thei) -- (\a*\dx,{(\then+.5)*\dx});
			}{
			}
			\stepcounter{i}
		}
		%%draw edges
		\foreach \a/\b in {#3}{
			\draw[line width=3*\dx pt](\a) -- (\b);
		}
		%%draw special coordinates
		\foreach \a/\b/\c in {#4}{
			\draw[\c,line width=.5pt](\a*\dx,\b*\dx) circle(.25*\dx);
		}
		%%draw text label
		\foreach \a/\b/\c/\d in {#6}{
			\draw[\d](\a*\dx,\b*\dx) node[scale=.75,anchor=west]{\c};
		}
	\end{tikzpicture}
}
\begin{tikzpicture}%
	[scale=0.3,
	x={(3.8cm, 0cm)},
	y={(0cm, 3.8cm)},
	% 	[scale=1.1,
	% x={(1cm, 0cm)},
	% y={(0cm, 1cm)},
	back/.style={loosely dotted, thin},
	edge/.style={color=blue!95!black, thick},
	facet/.style={fill=red!95!black,fill opacity=0.800000},
	vertex/.style={inner sep=1pt,circle,draw=green!25!black,fill=green!75!black,thick,anchor=base}]
	\scriptsize
	
	\coordinate (c120) at (0,0);
	\coordinate (c030) at (-1,1);
	\coordinate (c021) at (0,2);
	\coordinate (c012) at (1,3);
	\coordinate (c003) at (2,4);
	
	\coordinate (c111) at (1,1);
	\coordinate (c102) at (2,2);
	
	\node (n120) at (c120) {\customnuTree{0,3,2}{2/0//,3/0//,0/1//,1/1//,2/1//,0/2//}{p1/p2,p1/p5,p3/p4,p4/p5,p3/p6}{}{0.07}{}{}};
	\node (n030) at (c030) {\customnuTree{0,3,2}{3/0//,0/1//,1/1//,2/1//,3/1//,0/2//}{p1/p5,p2/p3,p3/p4,p4/p5,p2/p6}{}{0.07}{}{}};
	\node (n021) at (c021) {\customnuTree{0,3,2}{3/0//,1/1//,2/1//,3/1//,0/2//,1/2//}{p1/p4,p2/p3,p3/p4,p2/p6,p5/p6}{}{0.07}{}{}};
	\node (n012) at (c012) {\customnuTree{0,3,2}{3/0//,2/1//,3/1//,0/2//,1/2//,2/2//}{p1/p3,p2/p3,p2/p6,p4/p5,p5/p6}{}{0.07}{}{}};
	\node (n003) at (c003) {\customnuTree{0,3,2}{3/0//,3/1//,0/2//,1/2//,2/2//,3/2//}{p1/p2,p2/p6,p3/p4,p4/p5,p5/p6}{}{0.07}{}{}};

	\node (n111) at (c111) {\customnuTree{0,3,2}{2/0//,3/0//,1/1//,2/1//,0/2//,1/2//}{p1/p2,p1/p4,p3/p4,p3/p6,p5/p6}{}{0.07}{}{}};
	\node (n102) at (c102) {\customnuTree{0,3,2}{2/0//,3/0//,2/1//,0/2//,1/2//,2/2//}{p1/p2,p1/p3,p3/p6,p4/p5,p5/p6}{}{0.07}{}{}};
%\node (n120) at (c120) {\nuTree{0,3,2}{2/0//,3/0//,0/1//,1/1//,2/1//,0/2//}{p1/p2,p1/p5,p3/p4,p4/p5,p3/p6}{}{0.07}{}{}};
%\node (n030) at (c030) {\nuTree{0,3,2}{3/0//,0/1//,1/1//,2/1//,3/1//,0/2//}{p1/p5,p2/p3,p3/p4,p4/p5,p2/p6}{}{0.07}{}{}};
%\node (n021) at (c021) {\nuTree{0,3,2}{3/0//,1/1//,2/1//,3/1//,0/2//,1/2//}{p1/p4,p2/p3,p3/p4,p2/p6,p5/p6}{}{0.07}{}{}};
%\node (n012) at (c012) {\nuTree{0,3,2}{3/0//,2/1//,3/1//,0/2//,1/2//,2/2//}{p1/p3,p2/p3,p2/p6,p4/p5,p5/p6}{}{0.07}{}{}};
%\node (n003) at (c003) {\nuTree{0,3,2}{3/0//,3/1//,0/2//,1/2//,2/2//,3/2//}{p1/p2,p2/p6,p3/p4,p4/p5,p5/p6}{}{0.07}{}{}};
%
%\node (n111) at (c111) {\nuTree{0,3,2}{2/0//,3/0//,1/1//,2/1//,0/2//,1/2//}{p1/p2,p1/p4,p3/p4,p3/p6,p5/p6}{}{0.07}{}{}};
%\node (n102) at (c102) {\nuTree{0,3,2}{2/0//,3/0//,2/1//,0/2//,1/2//,2/2//}{p1/p2,p1/p3,p3/p6,p4/p5,p5/p6}{}{0.07}{}{}};

\draw[edge,->] (n120) -- (n030); 
\draw[edge,->] (n030) -- (n021); 
\draw[edge,->] (n021) -- (n012); 
\draw[edge,->] (n012) -- (n003); 

\draw[edge,->] (n120) -- (n111);
\draw[edge,->] (n111) -- (n021);
\draw[edge,->] (n111) -- (n102);

\draw[edge,->] (n102) -- (n012);
\end{tikzpicture}
				%%a macro to try something
\renewcommand{\customnuTree}[7]{
	%Parameter 1: bounding path!!!always starts with a north step
	%Parameter 2: nodes
	%Parameter 3: edges
	%Parameter 4: special coordinates
	%Parameter 5: scaling
	%Parameter 6: text
	%Parameter 7: background color
	%%
	\begin{tikzpicture}
		\def\dx{#5};
		\def\s{3*\dx};	%needed for scaling the nodes correctly; see fnode-style
		\setcounter{n}{0}
		\setcounter{m}{0}
		\setcounter{orient}{0}
		\setcounter{curx}{0}
		\setcounter{cury}{0}
		%%compute y-coordinate of endpoint of nu
		\foreach \step in {#1}{
			\ifthenelse{\equal{\theorient}{0}}{
				\addtocounter{n}{\step}
				\setcounter{orient}{1}
			}{
				\addtocounter{m}{\step}
				\setcounter{orient}{0}
			}
		}
		%%draw nu and background grid
		\begin{pgfonlayer}{background}
			\draw[\gray,dashed](0*\dx,1*\dx) -- (0*\dx,{(2*\dx});
			\draw[\gray,dashed](1*\dx,0*\dx) -- (1*\dx,{(2*\dx});
			\draw[\gray,dashed](2*\dx,0*\dx) -- (2*\dx,{(2*\dx});		
			\draw[\gray,dashed](3*\dx,1*\dx) -- (3*\dx,{(2*\dx});
			\draw[\gray,dashed](1*\dx,0*\dx) -- (2*\dx,{(0*\dx});
			\draw[\gray,dashed](0*\dx,1*\dx) -- (3*\dx,{(1*\dx});
			\draw[\gray,dashed](0*\dx,2*\dx) -- (3*\dx,{(2*\dx});
		\end{pgfonlayer}
		%		\setcounter{i}{0}
		%		\setcounter{orient}{0}
		%		\foreach \step in {#1}{
			%			\ifthenelse{\equal{\theorient}{0}}{
				%				\coordinate(q\thei) at (\thecurx*\dx,\thecury*\dx);
				%				\stepcounter{i}
				%				\coordinate(q\thei) at (\thecurx*\dx,{(\thecury+\step)*\dx});
				%				\stepcounter{i}
				%				\draw[\gray,dashed](\thecurx*\dx,\thecury*\dx) -- (\thecurx*\dx,{(\thecury+\step)*\dx});
				%				\begin{pgfonlayer}{background}
					%					\foreach \k in {1,...,\step}{
						%						\draw[\gray,dashed](0*\dx,{(\thecury+\k)*\dx}) -- (\thecurx*\dx,{(\thecury+\k)*\dx});
						%					}
					%				\end{pgfonlayer}
				%				\addtocounter{cury}{\step}
				%				\setcounter{orient}{1}
				%			}{
				%				\draw[\gray,dashed](\thecurx*\dx,\thecury*\dx) -- ({(\thecurx+\step)*\dx},\thecury*\dx);
				%				\begin{pgfonlayer}{background}
					%					\foreach \k in {0,...,\step}{
						%						\ifthenelse{\equal{\k}{\step}}{
							%						}{
							%							\draw[\gray,dashed]({(\thecurx+\k)*\dx},\thecury*\dx) -- ({(\thecurx+\k)*\dx},\then*\dx);
							%						}
						%					}
					%				\end{pgfonlayer}
				%				\addtocounter{curx}{\step}
				%				\setcounter{orient}{0}
				%			}
			%		}
		%%draw background color
		\ifthenelse{\equal{#7}{}}{}{
			\coordinate(zero) at (0*\dx,0*\dx);
			\coordinate(q\thei) at(0*\dx,\then*\dx);
			\addtocounter{i}{-1}
			\foreach \k in {0,2,...,\thei}{
				\pgfmathparse{\k+1}
				\let\res\pgfmathresult
				\begin{pgfonlayer}{middle}
					\fill[#7] (zero |- q\k) -- (q\k) -- (q\res) -- (zero |- q\res) -- cycle;
				\end{pgfonlayer}
			}
		}
		%%draw nodes
		\setcounter{i}{1}
		\foreach \a/\b/\c/\d in {#2}{
			\draw(\a*\dx,\b*\dx) node[fnode,draw=\c,fill=\c](p\thei){};
			\ifthenelse{\equal{\d}{d}}{
				\draw[orange!50!gray,decorate,decoration={snake,amplitude=.5,segment length=2.5}](p\thei) -- (\a*\dx,{(\then+.5)*\dx});
			}{
			}
			\stepcounter{i}
		}
		%%draw edges
		\foreach \a/\b in {#3}{
			\draw[line width=3*\dx pt](\a) -- (\b);
		}
		%%draw special coordinates
		\foreach \a/\b/\c in {#4}{
			\draw[\c,line width=.5pt](\a*\dx,\b*\dx) circle(.25*\dx);
		}
		%%draw text label
		\foreach \a/\b/\c/\d in {#6}{
			\draw[\d](\a*\dx,\b*\dx) node[scale=.75,anchor=west]{\c};
		}
	\end{tikzpicture}
}
\begin{tikzpicture}%
	[scale=0.3,
	x={(3.8cm, 0cm)},
	y={(0cm, 3.8cm)},
	back/.style={loosely dotted, thin},
	edge/.style={color=blue!95!black, thick},
	facet/.style={fill=red!95!black,fill opacity=0.800000},
	vertex/.style={inner sep=1pt,circle,draw=green!25!black,fill=green!75!black,thick,anchor=base}]
	\scriptsize
	
	\coordinate (c120) at (0,0);
	\coordinate (c030) at (-1,1);
	\coordinate (c021) at (0,2);
	\coordinate (c012) at (0,3);
	\coordinate (c003) at (1,4);
	
	\coordinate (c111) at (1,1);
	\coordinate (c102) at (2.5,2.5);
	
	\node (n120) at (c120) {\customnuTree{0,2,1,1,1}{1/0//,2/0//,0/1//,1/1//,3/1//,0/2//}{p1/p2,p1/p4,p3/p4,p4/p5,p3/p6}{}{0.07}{}{}};
	\node (n030) at (c030) {\customnuTree{0,2,1,1,1}{2/0//,0/1//,1/1//,2/1//,3/1//,0/2//}{p1/p4,p2/p3,p3/p4,p4/p5,p2/p6}{}{0.07}{}{}};
	\node (n021) at (c021) {\customnuTree{0,2,1,1,1}{2/0//,1/1//,2/1//,3/1//,0/2//,1/2//}{p1/p3,p2/p3,p3/p4,p2/p6,p5/p6}{}{0.07}{}{}};
	\node (n012) at (c012) {\customnuTree{0,2,1,1,1}{2/0//,2/1//,3/1//,0/2//,1/2//,2/2//}{p1/p2,p2/p3,p2/p6,p4/p5,p5/p6}{}{0.07}{}{}};
	\node (n003) at (c003) {\customnuTree{0,2,1,1,1}{2/0//,3/1//,0/2//,1/2//,2/2//,3/2//}{p1/p5,p2/p6,p3/p4,p4/p5,p5/p6}{}{0.07}{}{}};
	
	\node (n111) at (c111) {\customnuTree{0,2,1,1,1}{1/0//,2/0//,1/1//,3/1//,0/2//,1/2//}{p1/p2,p1/p3,p3/p4,p3/p6,p5/p6}{}{0.07}{}{}};
	\node (n102) at (c102) {\customnuTree{0,2,1,1,1}{1/0//,2/0//,3/1//,0/2//,1/2//,3/2//}{p1/p2,p1/p5,p3/p6,p4/p5,p5/p6}{}{0.07}{}{}};
%	\node (n120) at (c120) {\nuTree{0,2,1,1,1}{1/0//,2/0//,0/1//,1/1//,3/1//,0/2//}{p1/p2,p1/p4,p3/p4,p4/p5,p3/p6}{}{0.07}{}{}};
%	% \node (n120) at (c120) {120};
%	\node (n030) at (c030) {\nuTree{0,2,1,1,1}{2/0//,0/1//,1/1//,2/1//,3/1//,0/2//}{p1/p4,p2/p3,p3/p4,p4/p5,p2/p6}{}{0.07}{}{}};
%	\node (n021) at (c021) {\nuTree{0,2,1,1,1}{2/0//,1/1//,2/1//,3/1//,0/2//,1/2//}{p1/p3,p2/p3,p3/p4,p2/p6,p5/p6}{}{0.07}{}{}};
%	\node (n012) at (c012) {\nuTree{0,2,1,1,1}{2/0//,2/1//,3/1//,0/2//,1/2//,2/2//}{p1/p2,p2/p3,p2/p6,p4/p5,p5/p6}{}{0.07}{}{}};
%	\node (n003) at (c003) {\nuTree{0,2,1,1,1}{2/0//,3/1//,0/2//,1/2//,2/2//,3/2//}{p1/p5,p2/p6,p3/p4,p4/p5,p5/p6}{}{0.07}{}{}};
%	
%	\node (n111) at (c111) {\nuTree{0,2,1,1,1}{1/0//,2/0//,1/1//,3/1//,0/2//,1/2//}{p1/p2,p1/p3,p3/p4,p3/p6,p5/p6}{}{0.07}{}{}};
%	\node (n102) at (c102) {\nuTree{0,2,1,1,1}{1/0//,2/0//,3/1//,0/2//,1/2//,3/2//}{p1/p2,p1/p5,p3/p6,p4/p5,p5/p6}{}{0.07}{}{}};
	
	\draw[edge,->] (n120) -- (n030); 
	\draw[edge,->] (n030) -- (n021); 
	\draw[edge,->] (n021) -- (n012); 
	\draw[edge,->] (n012) -- (n003); 
	
	\draw[edge,->] (n120) -- (n111);
	\draw[edge,->] (n111) -- (n102);
	\draw[edge,->] (n102) -- (n003);
	
	\draw[edge,->] (n111) -- (n021);
\end{tikzpicture}
				\begin{tikzpicture}%
	[scale=0.3,
	x={(3.8cm, 0cm)},
	y={(0cm, 3.8cm)},
	back/.style={loosely dotted, thin},
	edge/.style={color=blue!95!black, thick},
	facet/.style={fill=red!95!black,fill opacity=0.800000},
	vertex/.style={inner sep=1pt,circle,draw=green!25!black,fill=green!75!black,thick,anchor=base}]
	\scriptsize
	
	\coordinate (c120) at (0,0);
	\coordinate (c030) at (-1,1);
	\coordinate (c021) at (-1,2);
	\coordinate (c012) at (0,3);
	\coordinate (c003) at (1,4);
	
	\coordinate (c111) at (1.5,1.5);
	\coordinate (c102) at (2.5,2.5);
	
	\node (n120) at (c120) {\nuTree{0,1,1,2,1}{0/0//,1/0//,0/1//,2/1//,3/1//,0/2//}{p1/p2,p1/p3,p3/p4,p4/p5,p3/p6}{}{0.07}{}{}};
	\node (n030) at (c030) {\nuTree{0,1,1,2,1}{1/0//,0/1//,1/1//,2/1//,3/1//,0/2//}{p1/p3,p2/p3,p3/p4,p4/p5,p2/p6}{}{0.07}{}{}};
	\node (n021) at (c021) {\nuTree{0,1,1,2,1}{1/0//,1/1//,2/1//,3/1//,0/2//,1/2//}{p1/p2,p2/p3,p3/p4,p2/p6,p5/p6}{}{0.07}{}{}};
	\node (n012) at (c012) {\nuTree{0,1,1,2,1}{1/0//,2/1//,3/1//,0/2//,1/2//,2/2//}{p1/p5,p2/p3,p2/p6,p4/p5,p5/p6}{}{0.07}{}{}};
	\node (n003) at (c003) {\nuTree{0,1,1,2,1}{1/0//,3/1//,0/2//,1/2//,2/2//,3/2//}{p1/p4,p2/p6,p3/p4,p4/p5,p5/p6}{}{0.07}{}{}};
	
	\node (n111) at (c111) {\nuTree{0,1,1,2,1}{0/0//,1/0//,2/1//,3/1//,0/2//,2/2//}{p1/p2,p1/p5,p3/p4,p3/p6,p5/p6}{}{0.07}{}{}};
	\node (n102) at (c102) {\nuTree{0,1,1,2,1}{0/0//,1/0//,3/1//,0/2//,2/2//,3/2//}{p1/p2,p1/p4,p3/p6,p4/p5,p5/p6}{}{0.07}{}{}};

	\draw[edge,->] (n120) -- (n030); 
	\draw[edge,->] (n030) -- (n021); 
	\draw[edge,->] (n021) -- (n012); 
	\draw[edge,->] (n012) -- (n003); 
	
	\draw[edge,->] (n120) -- (n111);
	\draw[edge,->] (n111) -- (n102);
	\draw[edge,->] (n102) -- (n003);
	
	\draw[edge,->] (n111) -- (n012);
\end{tikzpicture}
			\end{center}
			\caption{Examples of rotation lattices of $(\delta,\nu)$-trees for $\nu=ENEEN$. Left: the $\nu$-Dyck lattice, for $\delta=(0,0)$. Middle: the lattice for $\delta=(1,0)$. Right: the $\nu$-Tamari lattice, for $\delta=(2,0)$.}
			\label{fig_altnu_lattices_ENEEN_trees}
		\end{figure}
		
		\begin{theorem}[{\cite{ceballos_nu_trees_2020}}]
			%\begin{theorem}[{\cite{ceballos_nu_trees_2020}}]
			The $\nu$-Tamari lattice is isomorphic to the rotation poset of~$\nu$-trees:
			$$
			\Tam{\nu} \cong \TamTrees{\nu}.
			$$
			In particular, the rotation poset of~$\nu$-trees is a lattice.
		\end{theorem}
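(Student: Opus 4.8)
The statement has two parts: the isomorphism $\Tam{\nu}\cong\TamTrees{\nu}$, and the corollary that $\TamTrees{\nu}$ is a lattice. The second part is immediate once the first is in hand, since being a lattice is invariant under poset isomorphism and $\Tam{\nu}$ is already known to be a lattice by the theorem of Préville-Ratelle and Viennot quoted above. So the plan concentrates on constructing the isomorphism. The overall strategy is to build an explicit bijection $\Phi$ between $\nu$-paths and $\nu$-trees and to show that it carries covering relations to covering relations \emph{in both directions}. Since each of $\Tam{\nu}$ and $\TamTrees{\nu}$ is by definition the reflexive transitive closure of its covering relations ($\nu$-rotations of paths, respectively $\nu$-rotations of trees), a bijection that is simultaneously cover-preserving and cover-reflecting induces an isomorphism of the covering digraphs, and hence of the two posets as their transitive closures. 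Thus the real content is entirely local: a correspondence between single rotations.

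First I would fix the bijection $\Phi$. To a $\nu$-tree $T$ I associate a canonical $\nu$-path read off from $T$ row by row (equivalently, from its right border when $T$ is drawn as a rooted binary tree), and conversely I describe the inverse that greedily ``right-flushes'' a unique $\nu$-tree along a given $\nu$-path. I would verify $\Phi$ is a bijection either by exhibiting the two maps as mutually inverse or by checking injectivity and equality of cardinalities, both sets being enumerated by the $\nu$-Catalan numbers. The normalization I would use to pin down conventions is exactly the one displayed in Figures~\ref{fig_altnu_lattices_ENEEN_paths} and~\ref{fig_altnu_lattices_ENEEN_trees}: the path and its tree receive the same coordinate label $(\mu_0,\dots,\mu_n)$, which makes $\Phi$ computable on small examples and fixes the orientation of the construction.

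The heart of the argument is matching the rotations. Start from a $\nu$-path rotation $\mu\lessdot_\nu\mu'$ performed at a valley with pivot point $p$: it exchanges the east step preceding the valley with the $\nu$-excursion issuing from the following north step, that is, the shortest subpath $A$ with $\elev_\nu(A)=0$, equivalently the subpath $\mu_{[p,q]}$ running to the next point $q$ with $\alt_\nu(q)=\alt_\nu(p)$. I would prove that under $\Phi$ this valley corresponds to a node $q_T=p'\llcorner r'$ of $\Phi(\mu)$ with the rectangle $p'\square r'$ containing no node of $\Phi(\mu)$ besides $p',q_T,r'$, and that the path rotation corresponds precisely to the tree rotation replacing $q_T$ by $p'\urcorner r'$. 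The key lemma translating the two worlds is: the emptiness of the rectangle $p'\square r'$ on the tree side is equivalent to the \emph{minimality} built into the $\nu$-excursion on the path side, namely that $q$ is the \emph{first} point after $p$ with $\alt_\nu(q)=\alt_\nu(p)$ (no earlier point returns to the same altitude). This is proved by expressing $\alt_\nu$ and $\elev_\nu$ in terms of node coordinates through $\Phi$: an intermediate node in $p'\square r'$ would correspond exactly to an earlier return of the altitude, and conversely. Running the same dictionary backwards shows every tree rotation $T\drot{\nu}T'$ arises from a unique path rotation, giving cover-reflection.

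With both directions established, $\Phi$ is a cover-preserving, cover-reflecting bijection, hence an isomorphism $\Tam{\nu}\cong\TamTrees{\nu}$, and the lattice property transfers to $\TamTrees{\nu}$. I expect the main obstacle to be precisely the rotation-matching lemma. The difficulty is one of asymmetry: a single path rotation can move an arbitrarily long subpath (the entire excursion, whose length is uncontrolled), whereas a single tree rotation only relocates one node from $p'\llcorner r'$ to $p'\urcorner r'$. The nontrivial point is therefore to show that $\Phi$ encodes the whole excursion in the \emph{position} of that one node, so that these two operations of very different apparent size coincide, and that the combinatorial minimality defining the excursion matches exactly the geometric emptiness of $p'\square r'$. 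Getting this dictionary between $\alt_\nu/\elev_\nu$ and node coordinates clean and reversible is where the care is needed; everything else is bookkeeping.
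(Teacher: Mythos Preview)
Your proposal is correct and follows essentially the same approach as the cited reference: the paper does not prove this theorem itself but quotes it from~\cite{ceballos_nu_trees_2020}, describing only the right flushing bijection $\flush_\nu$ that sends a $\nu$-path $\mu=(\mu_0,\dots,\mu_n)$ to the unique $\nu$-tree with $\mu_i+1$ nodes at height $i$---precisely the map $\Phi$ you sketch. Your plan to verify that $\flush_\nu$ matches path $\nu$-rotations with tree $\nu$-rotations via the correspondence between excursion minimality and rectangle emptiness is exactly the content of the cited proof.
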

		
		A bijection between these two posets is given by the \emph{right flushing bijection} $\flush_\nu$ introduced in~\cite{ceballos_nu_trees_2020}. This bijection maps a $\nu$-path $\mu=(\mu_0,\dots,\mu_n)$ to the unique $\nu$-tree with $\mu_i+1$ nodes at height $i$. 
		This tree can be recursively obtained by adding $\mu_i+1$ nodes at height~$i$ from bottom to top, from right to left, avoiding forbidden positions. The forbidden positions are those above a node that is not the left most node in a row (these come from the initial points of the east steps in the path $\mu$). In Figure~\ref{fig_right_flushing}, the forbidden positions are the ones that belong to the wiggly lines. Note that the order of the nodes per row is reversed.
		
		The inverse $\flush_\nu^{-1}$ of the right flushing bijection is called the \emph{left flushing bijection}, and can be described similarly, adding points from left to right, from bottom to top, avoiding the forbidden position given by the wiggly lines. 
        In other words, the left flushing bijection of a $\nu$-tree is the $\nu$-path that has as many nodes per row as the tree. 
		
		\begin{figure}
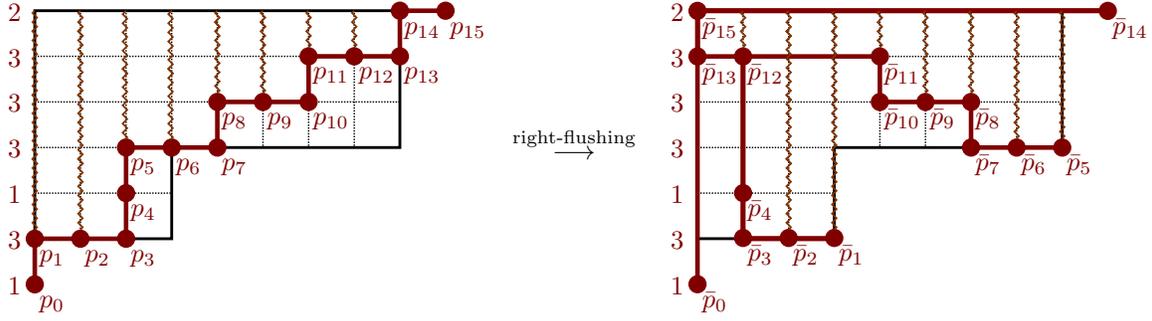

			%	\centering
			%    \input{Figures/right_flashing_bijection.tex}
			%	%old version
			
			\begin{center}
				\centering
				\begin{minipage}[c]{.4\linewidth}
					\def\svgwidth{\linewidth}
					\import{Figures/}{flushing-path.pdf_tex}
				\end{minipage}
				$\quad \overset{\text{right-flushing}}{\longrightarrow} \quad$
				\begin{minipage}[c]{.4\linewidth}
					\def\svgwidth{\linewidth}
					\import{Figures/}{flushing-tree.pdf_tex}
				\end{minipage}
			\end{center}
			\caption{Right flushing bijection from $\nu$-paths to $\nu$-trees.}
			\label{fig_right_flushing}
		\end{figure}
		
		\subsection{Left and right intervals in the $\nu$-Tamari lattice}
		
		The description of the $\nu$-Tamari lattice on  $\nu$-trees gives an easy description of its linear intervals.
		
		\begin{definition}
			An interval $[T, T']$ in $\TamTrees{\nu}$ is a \emph{left interval} if $ T' $ is obtained from $ T $ by applying $ \ell>0 $ rotations at the first $\ell$ nodes of a consecutive sequence $q_0,\dots, q_{\ell-1},q_\ell$ in the same row, from left to right. For example, applying two rotations at the first two nodes of the sequence $\bar p_{13},\bar  p_{12},\bar  p_{11}$ in Figure~\ref{fig_right_flushing} (right).
			It is a \emph{right interval} if $ T' $ is obtained from $ T $ by applying $ \ell $ rotations  at the first $\ell$ nodes of a consecutive sequence  $q_0,\dots, q_{\ell-1},q_\ell$ which are consecutive in the same column, from bottom to top. For example, applying two rotations at the first two nodes of the sequence $\bar p_{3}, \bar p_{4}, \bar p_{12}$ in Figure~\ref{fig_right_flushing} (right).
		\end{definition}
		
		\begin{proposition}\label{prop_left_right_vTam}
			The left and right intervals in the previous definition are linear intervals of length~$\ell$. Moreover,  all non trivial linear intervals in the rotation lattice on $\nu$-trees are either left or right intervals.
		\end{proposition}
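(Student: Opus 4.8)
The plan is to follow the same two-step strategy used for the $\nu$-Dyck lattice in \Cref{prop:linear_in_Dyck}: first show that the described left and right intervals are genuinely linear of length $\ell$ by exhibiting a unique maximal chain, and then show conversely that every non-trivial linear interval is of one of these two types, by an induction whose base case is a tree-analogue of \Cref{lem:Dyck-height-2}.

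For the forward direction, consider a left interval given by a consecutive row-sequence $q_0, \dots, q_\ell$. The key observation is that the $\ell$ rotations are forced to be performed from left to right. Writing $q_i=(x_i,h)$, I would check that rotating at $q_i$ requires a node of the current tree directly above $q_i$ to play the role of the north-west corner $p$; that rotating at $q_0$ uses a node already present in $T$; and that for $i\ge 1$ this north-west support is precisely the node $q_{i-1}'$ produced by the rotation at $q_{i-1}$, whose new position $(x_i,h')$ lies directly above $q_i$. Hence $q_i$ cannot be rotated before $q_{i-1}$, so $T$ has a single cover inside $[T,T']$, namely the rotation at $q_0$; removing it leaves a left interval of length $\ell-1$, and the claim follows by induction. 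The right intervals are treated symmetrically, the created node now serving as the south-east support $r$ of the next rotation instead of the north-west support.

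For the converse, the heart of the matter is the length-two lemma: if $T \drot{\nu} T' \drot{\nu} T''$ is linear, then $[T,T'']$ is a left or a right interval. Let the first rotation move $q$ from the south-west to the north-east corner $q'$ of a rectangle with north-west corner $p$ and south-east corner $r$. If the second rotation is independent of the first --- its rectangle avoids $q'$ and is not unblocked by the deletion of $q$ --- then the two rotations commute, producing a second maximal chain and hence a square, contradicting linearity. So the second rotation must interact with the first through the created node $q'$. I expect to show that the only two interactions compatible with linearity are: the second rotation moves the former south-east support $r$ using $q'$ as its north-west support (forcing the two moved nodes into a common row, a left interval), or it moves the former north-west support $p$ using $q'$ as its south-east support (forcing them into a common column, a right interval); in either case the two rectangles share a full edge and the same height-range (resp.\ width-range).

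The main obstacle is precisely this classification: ruling out the remaining interactions --- most notably the ``diagonal'' case in which the second rotation moves $q'$ itself, and the cases where deleting $q$ merely unblocks an otherwise unrelated rectangle --- and showing that these either cannot occur or fail to be linear. This requires a careful local analysis combining the emptiness conditions that define a rotation with the $\nu$-compatibility of the points involved, and it is exactly the step where the tree model is more delicate than the path model of \Cref{lem:Dyck-height-2}. Once the length-two lemma is established, the general case is a routine induction mirroring \Cref{prop:linear_in_Dyck}: given a linear interval of length $k+1\ge 3$, one takes the unique lower cover $T''$ of its top $T'$, applies the inductive hypothesis to the left (resp.\ right) interval $[T,T'']$, and uses the length-two lemma on the final two covers to extend the row-sequence (resp.\ column-sequence) by one node.
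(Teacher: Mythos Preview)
Your proposal is correct and follows essentially the same two-step strategy as the paper: the forward direction via a unique maximal chain, and the converse by induction with a length-$2$ base case analysing how the second rotation interacts with the first. One sharpening worth noting: the ``diagonal'' case you flag as the main obstacle (rotating the freshly created node $q'$) is in fact vacuous, since any node strictly above $q'$ in its column would lie strictly northeast of $p$ with the enclosing rectangle inside $F_\nu$, hence be $\nu$-incompatible with $p$; the only genuinely non-linear interaction is the pentagon, which the paper identifies as the second rotation taking place at the node immediately below $q$ in the same column --- exactly a case falling under your ``unblocked by deletion of $q$'' heading.
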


		\begin{proof}
			A left (resp. right) interval $ [T,T'] $ is indeed linear because there is a unique maximal chain from $ T $ to $ T' $ and $ \ell $ is its length.
			Indeed, each element different from $ T' $ in the interval has only one upper cover that is below $ T' $. 
			
			\medskip
			The converse is proven by induction, similarly as the simpler case of the Dyck lattice in \Cref{prop:linear_in_Dyck}.
			
			\noindent Intervals of length $ 1 $ are all covering relations and thus both left and right intervals.
			
			\noindent An interval which contains a maximal chain $ T_0 \lessdot T_1 \lessdot T_2 $ can be:
			\begin{itemize}
				\item a left interval when $ T_1 \lessdot T_2 $ is the rotation in the same row immediately on the right of $ T_0 \lessdot T_1 $,
				\item a right interval when $ T_1 \lessdot T_2 $ is the rotation in the same column immediately above $ T_0 \lessdot T_1 $,
				\item a pentagon when  $ T_1 \lessdot T_2 $ is the rotation in the same column immediately under $ T_0 \lessdot T_1 $, %add drawings?
				\item a square otherwise.
			\end{itemize}
			\noindent In particular, this proves that all linear intervals of length $ 2 $ are exactly left or right intervals.
			
			\medskip
			\noindent A linear interval $ [T,T'] $ of length $ k \geq 3 $ must contain a linear interval $ [T,T''] $ of length $ k-1 $.
			By induction, $ [T,T''] $ must be either a left interval and in this case $ [T,T'] $ is a left interval as well or a right interval and in this case $ [T,T'] $ is also a right interval.
		\end{proof}

		\begin{remark} \label{rem:nulinear}
			The left flushing of a left interval on the rotation lattice of $\nu$-trees produces a left interval~$ [P, Q] $ of $\nu$-paths in $ \Tam{\nu} $, where $ P $ is of the form $ A E^k B C $ with $ B $ some $\nu$-excursion and $ Q $ is of the form $ A B E^k C $.
			In other words, $ P $ is a $\nu$-path with a valley preceded by $ k $ east steps.
			
			The left flushing of a right interval on the rotation lattice of $\nu$-trees produces a right interval $ [P, Q] $ of $\nu$-paths in $ \Tam{\nu} $, where $ P $ is of the form $ A E B_1 \dots B_k C $ with $ B_1, \dots, B_k$ being $ k $ consecutive $\nu$-excursions, and $ Q $ is of the form $ A B_1 \dots B_k E C$.
		\end{remark}
		
		\section{The alt $\nu$-Tamari lattice}\label{sec_alt_nu_Tamari}
		
		Given a fixed path $\nu$, the $\nu$-Dyck lattice and the $\nu$-Tamari lattice are two posets defined on~$\nu$-paths with quite similar covering relations.
		In both cases, a covering relation consists of swapping the east step of a valley with a subpath that follows it.
		We can in fact define a whole family of posets that are described in a similar way, and we call them the alt~$\nu$-Tamari posets.
		The term ``alt'' stands for ``altitude'', a notion that we use in order to define them.
		We prove that the resulting posets are lattices and study their linear intervals.
		
		\subsection{On $\nu$-paths}
		Let $\nu = (\nu_0, \dots, \nu_n)$ be a fixed path.
		We say that $\delta = (\delta_1, \dots, \delta_n) \in \mathbb{N}^n$ is an increment vector with respect to $\nu$ if $\delta_i \leq \nu_i$ for all $ 1 \leq i \leq n $.

		Similarly as the $\nu$-altitude, we introduce a notion of $\delta$-altitude. 
		For a lattice point $ p $ on a $\nu$-path~$\mu$, define its \emph{$\delta$-altitude} $ \alt_\delta(p) $ as follows. 
		We set the $\delta$-altitude of the initial lattice point of $\mu$ to be equal to zero,
		and declare that the $ i $-th north step of $\mu$ increases the $\delta$-altitude by $\delta_i$ and an east step decreases the $\delta$-altitude by $ 1 $.
		
		Given a valley $ EN $ of $\mu$, let $ p $ be the lattice point between the east and north steps.
		Let $ q $ be the next lattice point of $ \mu $ such that $ \alt_\delta(q) =  \alt_\delta(p) $, and $ \mu_{[p,q]} $ be the subpath of $\mu$ that starts at $ p $ and ends at $ q $.
		Let $\mu'$ be the path obtained from $\mu$ by switching $ \mu_{[p,q]} $ with the east step $E$ that precedes it.
		The $\delta$-rotation of~$\mu$ at the valley $ p $ is defined to be $ \mu \drot{\delta}  \mu' $. An example is illustrated in Figure~\ref{fig_delta_nu_path_rotation}.
%		\cesar{Clement: please add~\Cref{fig_delta_nu_path_rotation} illustrating an example of a $\delta$-rotation of a $\nu$-path. Add the $\delta$-altitudes of the lattice points of the path (using my convention).}

		\begin{figure}[htb]
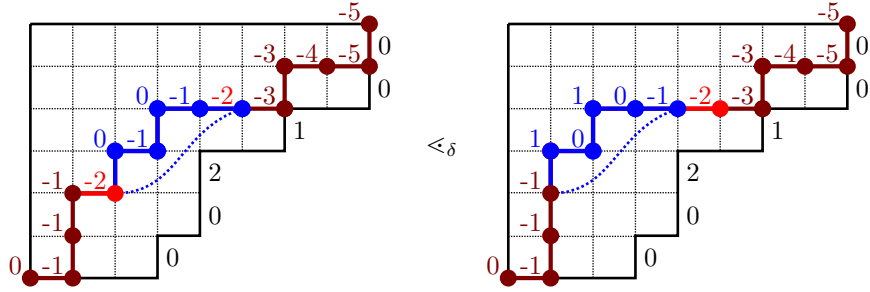

			\centering
		\begin{center}
			\centering
			\begin{minipage}[c]{.4\linewidth}
				\hfill
				\def\svgwidth{.8\linewidth}
				\import{Figures/}{delta-rotation-bot.pdf_tex}
			\end{minipage}
			$\quad \lessdot_{\delta} \quad$
			\begin{minipage}[c]{.4\linewidth}
				\def\svgwidth{.8\linewidth}
				\import{Figures/}{delta-rotation-top.pdf_tex}
				\hfill
			\end{minipage}
		\end{center}
			\caption{The $\delta$-rotation operation of a $\nu$-path for $\delta=(0,0,2,1,0,0)$. Each node is labelled with its $\delta$-altitude.}    
			\label{fig_delta_nu_path_rotation}
		\end{figure}
		
		Note that a $\delta$-rotation increases the number of boxes below the path, and therefore its reflexive transitive closure induces a poset structure on the set of $\nu$-paths. 
		
		\begin{definition}
			Let $\delta$ be an increment vector with respect to $\nu$.
			The \emph{alt $\nu$-Tamari poset} $ \altTam{\nu}{\delta} $ is the reflexive transitive closure of $\delta$-rotations on the set of $\nu$-paths.
		\end{definition}
		
		The three examples of the $\nu$-Tamari poset for $\nu=ENEEN=(1,2,0)$ are illustrated on~\Cref{fig_altnu_lattices_ENEEN_paths}. 
		
		\begin{remark}
			For a fixed path $\nu$, there are two extreme choices for the increment vector $\delta$.
			If $\delta_i = \nu_i$ for all $ 1 \leq i \leq n $, the alt $\nu$-Tamari lattice coincides with the $\nu$-Tamari lattice.
			If~$\delta_i = 0$ for all $ 1 \leq i \leq n $, the alt $\nu$-Tamari lattice coincides with the $\nu$-Dyck lattice. 
			We denote these two cases by~$\delta^{\max}$ and~$\delta^{\min}$, respectively.
		\end{remark}
		
		Another approach to define the alt $\nu$-Tamari poset is to introduce the notion of \emph{$\delta$-elevation} of a subpath as the difference of the $\delta$-altitude between its ending point and its starting point.    
		We thus write $ \elev_\delta(E) = -1 $ for an east step $E$ and $\elev_\delta(N_i) = \delta_i$ if $ N_i $ is the $ i $-th north step of a $\nu$-path $\mu$.
		For any subpath $ A $ of $\mu$, we then have $\elev_\delta(A) = \sum_{a \in A} \elev_\delta(a)$ as the sum of the $\delta$-elevation of the steps of $A$.
		
		The \emph{$\delta$-excursion} of a north step $ N $ of a $\nu$-path $\mu$ is defined as the shortest subpath~$ A $ of $ \mu $ that starts with this $ N $ and such that $ \elev_\delta(A) = 0 $.
		It follows from the definition of the $\delta$-excursion that exchanging the east step $ E $ of a valley with the $\delta$-excursion that follows it is exactly a covering relation in $ \altTam{\nu}{\delta} $.
		
		\begin{remark}
			Note that for $\delta = \delta^{\max}$, the $\delta$-altitude is the $\nu$-altitude shifted by $-\nu_0$, but the $\delta$-elevation and the $\nu$-elevation are equal.
		\end{remark}
		
		For a general increment vector $\delta$ with respect to $\nu$, it is not a priori clear that $ \altTam{\nu}{\delta} $ is a lattice. This is a consequence of the following proposition.
		
		\begin{proposition}\label{prop_delta_poset_nuprime}
			Let $ \check\nu_0 = \sum_{i = 0}^{n} \nu_i - \sum_{i = 1}^{n} \delta_i$ with $\delta_i \leq \nu_i$.
			Then $\check\nu = (\check\nu_0,\check\nu_1,\dots, \check\nu_n) = (\check\nu_0, \delta_1, \dots, \delta_n)$ is a path below $\nu$ whose endpoints are the same as $\nu$. Moreover, the following properties hold:
			\begin{enumerate}
				\item $\delta$-rotations of a~$\nu$-path $ \mu $ coincide with $\check\nu$-rotations of $ \mu $. \label{prop_delta_poset_nuprime_item1}
				\item The alt $\nu$-Tamari poset $ \altTam{\nu}{\delta} $ is the restriction of $ \Tam{\check\nu} $ to the subset of paths weakly above~$\nu$.\label{prop_delta_poset_nuprime_item2}
				\item The covering relations of $ \altTam{\nu}{\delta} $ are exactly the $\delta$-rotations. \label{prop_delta_poset_nuprime_item3}
				\item The alt $\nu$-Tamari poset $ \altTam{\nu}{\delta} $ is the interval $[\nu,1^\nu]$ in $ \Tam{\check\nu} $. \label{prop_delta_poset_nuprime_item4}
			\end{enumerate}
			Here, $1^\nu=N^nE^{m}$ denotes the top path above $\nu$ and $\check\nu$, where $m=\nu_0+\dots+\nu_n=\check\nu_0+\dots+\check\nu_n$. 
		\end{proposition}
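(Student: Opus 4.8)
\emph{Plan.} I would prove the four items in the order they depend on one another: the preliminary claims and~(1) first, then (2) and (3) together, and finally~(4), which carries the real content. For the preliminaries, that $\check\nu$ has the same endpoints as $\nu$ is immediate ($n$ north steps and $\check\nu_0+\sum_{i\geq 1}\delta_i=\sum_{i\geq 0}\nu_i=m$ east steps), and $\check\nu$ lies weakly below $\nu$ since $\sum_{i=0}^{j}\check\nu_i-\sum_{i=0}^{j}\nu_i=\sum_{i>j}(\nu_i-\delta_i)\geq 0$ by $\delta_i\leq\nu_i$; in particular every $\nu$-path is a $\check\nu$-path, so all the posets below live inside $\Tam{\check\nu}$. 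Item~(1) is then essentially free once phrased via elevations: $\check\nu_i=\delta_i$ for $i\geq 1$ and $\elev_{\check\nu}(E)=\elev_\delta(E)=-1$, so the two statistics agree step by step on every subpath, whence the $\delta$-excursion and the $\check\nu$-excursion of a north step coincide. As both rotations are ``swap the east step of a valley with the excursion that follows it'', they agree on any $\nu$-path. (Equivalently, $\alt_\delta$ differs from the $\check\nu$-altitude by the constant $\nu_0$, and the rotation only reads off the next point of equal altitude.)

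For (2) and (3) I would first record that a $\delta$-rotation slides an east step rightward past an excursion that begins with a north step, hence moves a path weakly upward; consequently the set of $\nu$-paths is an \emph{up-set} of $\Tam{\check\nu}$. Now if $P\leq_{\Tam{\check\nu}}Q$ are both $\nu$-paths, any saturated chain between them stays in $[P,Q]$, so its members are weakly above $P$ and hence above $\nu$; by the theorem of Pr\'eville-Ratelle and Viennot its covers are $\check\nu$-rotations, which by~(1) are $\delta$-rotations. Thus $\altTam{\nu}{\delta}$ is exactly the order induced by $\Tam{\check\nu}$ on the $\nu$-paths, giving~(2); and since a cover inside an up-set is a cover in the ambient poset, the covers are precisely the $\delta$-rotations, giving~(3). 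Note this uses only that the $\nu$-paths form an up-set, not yet that they form the filter of $\nu$.

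Part (4) is where the work lies. Because $1^\nu$ is the top of $\Tam{\check\nu}$, the interval $[\nu,1^\nu]$ is the principal filter of $\nu$, and ``rotations move up'' already gives (filter of $\nu$)~$\subseteq$~($\nu$-paths). The hard direction is that \emph{every} $\nu$-path $\mu$ satisfies $\nu\leq_{\Tam{\check\nu}}\mu$, which is nontrivial precisely because the Tamari order is strictly weaker than the weakly-above order. I would argue by descending induction on the area between $\mu$ and $\nu$, via the lemma: \emph{if $\mu\neq\nu$ is weakly above $\nu$, then $\mu$ has a lower cover in $\Tam{\check\nu}$ still weakly above $\nu$.} To build it, let $k$ be the \emph{largest} index for which the $k$-th north step of $\mu$ lies strictly left of that of $\nu$ (such $k$ exists since $\mu\neq\nu$ while $\mu$ is weakly above $\nu$). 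Maximality forces the $(k{+}1)$-st north steps of $\mu$ and $\nu$ to occupy the same column (or $k=n$), so together with $\delta_k\leq\nu_k$ one computes that $\mu$ has strictly more than $\delta_k$ east steps immediately after its $k$-th north step. Hence the $\check\nu$-excursion of $N_k$ is the short block $N_kE^{\delta_k}$ and is followed by a further east step; the associated down-rotation slides that lone east step in front of $N_k$, moving only $N_k$ one column right while fixing every other north step. Since $N_k$ was strictly left of $\nu$'s, it remains weakly left, so the new path is still weakly above $\nu$; and it is a genuine lower cover because it is the inverse of a $\check\nu$-rotation. Iterating lowers the area to $0$, i.e.\ down to $\nu$.

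The main obstacle is exactly this lemma: a priori a $\check\nu$-excursion can be long, and sliding it rightward could push a high north step below $\nu$. The device of choosing the \emph{topmost} offending north step is what defeats this, because maximality pins its excursion to the single block $N_kE^{\delta_k}$ so that only one north step moves. With the lemma established, the filter of $\nu$ equals the set of $\nu$-paths, proving~(4); and since $\altTam{\nu}{\delta}$ is then an interval in the lattice $\Tam{\check\nu}$, it is itself a lattice, which is the intended payoff.
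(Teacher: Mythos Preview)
Your proof is correct and closely parallels the paper's. The preliminaries and item~(1) are handled identically, via $\elev_\delta=\elev_{\check\nu}$; for (2) and~(3) you are more explicit than the paper (which just says ``follow from Item~(1)''), correctly invoking that the set of $\nu$-paths is an up-set of $\Tam{\check\nu}$. For~(4), you descend from $\mu$ to $\nu$ by repeatedly down-rotating at the \emph{topmost} offending north step, whereas the paper ascends from $\nu$ to $\mu$ by adding boxes one at a time, bottom-to-top and right-to-left, asserting each step is a $\delta$-rotation ``because $\delta_i\leq\nu_i$''. These two arguments traverse the same saturated chain in opposite directions and rest on the same observation, which you make explicit and the paper leaves implicit: the extremal choice of row forces $N_{k+1}$ to sit at (or to the right of) $\nu$'s position, so at least $\nu_k\geq\delta_k$ east steps follow $N_k$, the $\check\nu$-excursion collapses to the short block $N_kE^{\delta_k}$, and the (down-)rotation moves only $N_k$ by one column. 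Your framing has the minor advantage that staying weakly above $\nu$ is immediate (only one north step moves, and it was strictly left of $\nu$'s), whereas in the upward direction one must also check that the intermediate paths do not overshoot~$\mu$.
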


		\begin{proof}
			The first part of the statement follows from  
			$
			\sum_{i=0}^j \check\nu_i - \sum_{i=0}^j \nu_i = \sum_{i=j+1}^n (\nu_i-\delta_i) \geq 0, 
			$
			where equality holds for $j=n$.
			
			Note that a $\nu$-path $ \mu $ is also a $\check\nu$-path, and for a subpath $A$ of $\mu$ we have $\elev_\delta(A)=\elev_{\check\nu}(A)$. Since the $\delta$-rotations (resp. $\check\nu$-rotations) are determined by the $\delta$-elevation (resp. $\check\nu$-elevation), then Item~\eqref{prop_delta_poset_nuprime_item1} follows. Items~\eqref{prop_delta_poset_nuprime_item2} and~\eqref{prop_delta_poset_nuprime_item3} follow from Item~\eqref{prop_delta_poset_nuprime_item1}.
			
			For Item~\eqref{prop_delta_poset_nuprime_item4} we need to show that 
			the restriction of $ \Tam{\check\nu} $ to the subset of paths weakly above~$\nu$ is the interval $[\nu,1^\nu]$ in $ \Tam{\check\nu} $.
			In other words, we need to show that every $\nu$-path $\mu$ satisfies $\nu \leq_{\Tam{\check\nu}} \mu$. 
			Note that this property does not hold for an arbitrary path $\check\nu$ below $\nu$, but for our particular choice this is equivalent to show that~$\nu \leq_{\altTam{\nu}{\delta}} \mu$ (by Item~\eqref{prop_delta_poset_nuprime_item1}). This holds because we can reach any $\nu$-path~$\mu$ by applying a sequence of $\delta$-rotations: add the boxes between $\nu$ and $\mu$ one at a time from bottom to top, from right to left. Each of these steps corresponds to a $\delta$-rotation because $\delta_i\leq \nu_i$. 
		\end{proof}
		
		\begin{corollary}\label{cor_alt_Tamari_is_lattice}
			The alt $\nu$-Tamari poset is a lattice.
		\end{corollary}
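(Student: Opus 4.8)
The plan is to deduce this immediately from Proposition~\ref{prop_delta_poset_nuprime}, which does all the real work. By item~\eqref{prop_delta_poset_nuprime_item4} of that proposition, the alt $\nu$-Tamari poset $\altTam{\nu}{\delta}$ is equal, as a poset, to the interval $[\nu, 1^\nu]$ inside $\Tam{\check\nu}$, where $\check\nu = (\check\nu_0, \delta_1, \dots, \delta_n)$ is the path constructed there. So the task reduces to a general order-theoretic observation: a closed interval of a lattice is again a lattice.

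First I would invoke the cited theorem of Préville-Ratelle and Viennot, which guarantees that $\Tam{\check\nu}$ is a lattice; this applies to $\check\nu$ since it is an honest lattice path with the same endpoints as $\nu$ (again established in Proposition~\ref{prop_delta_poset_nuprime}). Then I would carry out the standard verification that an interval $[a,b]$ in any lattice $L$ is itself a lattice. Given $x,y \in [a,b]$, let $x \wedge_L y$ denote their meet in $L$. Since $a \leq x$ and $a \leq y$, the element $a$ is a lower bound of $\{x,y\}$, so $a \leq x \wedge_L y$; and $x \wedge_L y \leq x \leq b$. Hence $x \wedge_L y \in [a,b]$, and it is clearly the greatest lower bound of $\{x,y\}$ within $[a,b]$ as well. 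The dual argument with $x \vee_L y$ and the bounds $x \vee_L y \leq b$, $a \leq x \leq x \vee_L y$ shows the interval is closed under joins. Thus $[a,b]$ is a sublattice of $L$, and in particular a lattice in its own right.

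Applying this with $L = \Tam{\check\nu}$, $a = \nu$, and $b = 1^\nu$ yields that $[\nu, 1^\nu]$ is a lattice, and therefore so is $\altTam{\nu}{\delta}$ by the identification in Proposition~\ref{prop_delta_poset_nuprime}\eqref{prop_delta_poset_nuprime_item4}.

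I do not anticipate any real obstacle here: the entire difficulty has already been absorbed into Proposition~\ref{prop_delta_poset_nuprime}, whose item~\eqref{prop_delta_poset_nuprime_item4} required showing $\nu \leq_{\Tam{\check\nu}} \mu$ for every $\nu$-path $\mu$ (the point being that $\check\nu$ was chosen precisely so that the relevant box-additions are legitimate $\delta$-rotations). The only thing to be careful about is the logical dependency: the meet and join of two $\nu$-paths computed in $\altTam{\nu}{\delta}$ are inherited from $\Tam{\check\nu}$ and need not coincide with naive pointwise operations on paths, but the interval argument above makes no use of any such description and so the conclusion holds regardless.
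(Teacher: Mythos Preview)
Your proof is correct and follows essentially the same approach as the paper: invoke Proposition~\ref{prop_delta_poset_nuprime}\eqref{prop_delta_poset_nuprime_item4} to identify $\altTam{\nu}{\delta}$ with the interval $[\nu,1^\nu]$ in $\Tam{\check\nu}$, then use that an interval in a lattice is a lattice. The paper states this last fact without proof, whereas you spell out the standard verification, but the argument is the same.
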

		
		\begin{proof}
			By~\Cref{prop_delta_poset_nuprime}~\eqref{prop_delta_poset_nuprime_item4}, the alt $\nu$-Tamari poset~$ \altTam{\nu}{\delta} $ is isomorphic to the interval $ [\nu,1^\nu] $ in~$ \Tam{\check\nu} $.
			Since an interval in a lattice is also a lattice, we deduce that $ \altTam{\nu}{\delta} $ is a lattice.
		\end{proof}
	
		\begin{remark}
			If we chose any other path $\check{\nu}$ weakly below $\nu$ that does not satisfy $\check{\nu}_i \leq \nu_i$, for all $i > 0$, then the restriction of $\Tam{\check{\nu}}$ to the subset of $\nu$-paths is not a lattice. It is an upper set but it has several minimal elements. The results on the number of linear intervals that are presented in the rest of the paper do not hold either with this weaker condition (see \Cref{rem:wrongcase}).
		\end{remark}

		% Our goal now is to show that  the subset of paths above $\nu$ forms an interval in $ \Tam{\check\nu} $. 
		% It is not hard to prove this directly. But we choose to use the following result, which is interesting on its own.  
		
		We do not use the following proposition in this paper, but it is an interesting property that we would like to highlight. 
		
		\begin{proposition} \label{prop_comparison_extension}
			Let $\delta$ and $ \delta'$ be two increment vectors with respect to $\nu$ such that $\delta_i \leq \delta_i'$ for all~$i$.
			If $ P < Q $ in $ \altTam{\nu}{\delta'}$, then $ P < Q $ in $ \altTam{\nu}{\delta} $.
		\end{proposition}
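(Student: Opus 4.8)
The plan is to show that the order relation of $\altTam{\nu}{\delta'}$ is contained in that of $\altTam{\nu}{\delta}$ by proving that every $\delta'$-rotation can be realized as a sequence of $\delta$-rotations. Since each alt $\nu$-Tamari order is the transitive closure of its rotations, it suffices to prove: if $\mu \drot{\delta'} \mu'$ is a single $\delta'$-covering relation, then $\mu < \mu'$ in $\altTam{\nu}{\delta}$; transitivity along a chain of $\delta'$-rotations from $P$ to $Q$ then gives the statement. Moreover, by choosing a chain $\delta = \delta^{(0)} \le \delta^{(1)} \le \cdots \le \delta^{(r)} = \delta'$ in which consecutive increment vectors differ by one unit in a single coordinate, and composing the resulting containments, I may assume $\delta' = \delta + e_j$ for a single index $j$ (where $e_j$ is the $j$-th standard basis vector). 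This reduction keeps all intermediate vectors valid increment vectors, since $\delta_i \le \delta^{(\cdot)}_i \le \delta'_i \le \nu_i$.

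So fix $\delta' = \delta + e_j$ and a $\delta'$-rotation swapping the east step $E$ of a valley $E N_a$ with the $\delta'$-excursion $B'$ of $N_a$, i.e.\ $\mu = A\,E\,B'\,D \drot{\delta'} A\,B'\,E\,D = \mu'$. The key is to understand $B'$ in terms of $\delta$-excursions. First I would track the two height functions $h^\delta_k = \elev_\delta(\text{prefix of }B'\text{ of length }k)$ and $h^{\delta'}_k$: since $\delta$ and $\delta'$ agree except at $N_j$, the two profiles coincide until $N_j$ is read and satisfy $h^{\delta'} = h^\delta + 1$ afterwards. If $N_j \notin B'$ then $B'$ is already the $\delta$-excursion $B$ and a single $\delta$-rotation suffices, so I may assume $N_j \in B'$. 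The first return of $h^\delta$ to $0$ yields the $\delta$-excursion $B$ of $N_a$, and $B$ must contain $N_j$ (otherwise the two profiles would agree throughout $B$ and $h^{\delta'}$ would vanish at the end of $B$, forcing $B' = B$). Hence at the end of $B$ one has $h^{\delta'} = 1$, and since $h^{\delta'} = h^\delta + 1$ on the rest of $B'$, repeating the argument shows that $C = B' \setminus B$ splits as $C = B_1 B_2 \cdots B_t\,E$, where each $B_\ell$ is the $\delta$-excursion of its initial north step (detected by a return of $h^\delta$ to $0$, i.e.\ $h^{\delta'} = 1$, staying strictly inside $B'$) and the final lone east step is exactly where $h^{\delta'}$ first reaches $0$. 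Thus $B' = B\,B_1 \cdots B_t\,E$.

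With this decomposition I would realize the $\delta'$-rotation by sliding the east step $E$ rightward across one $\delta$-excursion at a time: starting from $A\,E\,B\,B_1\cdots B_t\,E\,D$, rotate $E$ with $B$ to reach $A\,B\,E\,B_1\cdots B_t\,E\,D$, then with $B_1$, and so on, arriving at $A\,B\,B_1\cdots B_t\,E\,E\,D = \mu'$. Each of these is a genuine $\delta$-rotation: the active step is a valley $E N$, and since the $\delta$-excursion of a north step depends only on the steps weakly to its right, rearranging the prefix does not change the excursion to be swapped, so it is exactly $B$ (resp.\ $B_\ell$). As every $\delta$-rotation strictly increases the number of boxes below the path, this is a strictly increasing chain from $\mu$ to $\mu'$ in $\altTam{\nu}{\delta}$, giving $\mu < \mu'$ and completing the single-step case.

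The main obstacle is the structural claim $B' = B\,B_1 \cdots B_t\,E$: one must argue that, once the single bump at $N_j$ has been absorbed into the first excursion $B$, the tail of $B'$ is precisely a concatenation of $\delta$-excursions followed by exactly one extra east step. The reduction to $\delta' = \delta + e_j$ is what forces the tail to end in a single east step; in the general case the surplus elevation, hence the number of trailing east steps, would be larger and the bookkeeping messier. The comparison $h^{\delta'} = h^\delta + 1$ past $N_j$ is the engine that both locates $N_j$ inside $B$ and certifies that each intermediate return of $h^\delta$ to $0$ stays strictly inside $B'$. Finally, I would check the degenerate cases, namely $\delta_a = 0$ (where $B = \{N_a\}$ and one simply rotates $E$ past the lone north step) and $t = 0$ (where $C = E$ and the very first rotation already reaches $\mu'$); both are covered verbatim by the same height-function analysis.
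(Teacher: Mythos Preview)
Your proof is correct and follows the same core strategy as the paper's: reduce to a single $\delta'$-covering relation and realize it as a chain of $\delta$-rotations by sliding the east step rightward across successive $\delta$-excursions. The paper skips your preliminary reduction to $\delta' = \delta + e_j$ and argues directly for arbitrary $\delta \le \delta'$, using that the $\delta$-excursion of any north step in the $\delta'$-excursion $B$ is a prefix of its $\delta'$-excursion (hence stays inside $B$) and treating any intervening east steps as trivial swaps; your reduction buys the cleaner explicit decomposition $B' = B\,B_1\cdots B_t\,E$ with a single trailing east step.
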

		
		In other words, whenever $\delta \leq \delta'$, the poset $ \altTam{\nu}{\delta} $ is an \emph{extension} of the poset $ \altTam{\nu}{\delta'} $, meaning that it can be obtained from $ \altTam{\nu}{\delta'} $ by adding some relations.
		
		\begin{proof}
			It is sufficient to prove that the result is true for covering relations in $ \altTam{\nu}{\delta'}$, namely that if~$ P \drot{\delta'} Q$, then $ P < Q $ in $ \altTam{\nu}{\delta} $.
			
			We can write $ P = A E B C $ and $ Q = A B E C $ for some $\delta'$-excursion $ B $.
			Note that for any north step~$ N $ in $ B $, the $\delta'$ excursion of $ N $ is a subword of $ B $ and that the $\delta$-excursion of any north step is a prefix of its $\delta'$-excursion since we have $ \delta \leq \delta' $.
			Thus, we can can build a chain of $\delta$-rotations from $ P $ to $ Q $ by exchanging this east step $ E $ with either the next $\delta$-excursions if it is followed by a north step or with the east step that follows it otherwise, which does not change the path.
		\end{proof}
		
		% \cesarm{put Prop. 4.3, Prop 4.5, and Thm 4.6 together}
		% \begin{proposition} \label{prop_interval_in_nu-Tamari}
			% 	Let $\nu,\check\nu$ be as in Proposition~\ref{prop_delta_poset_nuprime}, and let $1^\nu=N^nE^{m}$ be the top path above $\nu$ and $\check\nu$, where $m=\nu_0+\dots+\nu_n=\check\nu_0+\dots+\check\nu_n$. 
			% 	The restriction of $ \Tam{\check\nu} $ to the subset of paths above $\nu$ is the interval $[\nu,1^\nu]$ in $ \Tam{\check\nu} $. 
			% \end{proposition}
		
		%  \begin{proof}
			%  	The poset $ \altTam{\nu}{\delta} $ is the restriction of $ \Tam{\check\nu} $ to the subset of $\nu$-paths, as stated in \Cref{prop_delta_poset_nuprime}.
			
			%  	Because of \Cref{prop_comparison_extension}, the alt $\nu$-Tamari poset $ \altTam{\nu}{\delta} $ is an extension of the $\nu$-Tamari poset.
			%  	For every $\nu$-path $ P $, we have $ \nu \leq P \leq  1^\nu$ in $ \Tam{\nu} $.
			%  	Thus, we have $\nu \leq P \leq 1^\nu$ in $ \altTam{\nu}{\delta}$.
			% \end{proof}
		
		% \begin{theorem}\label{cor_alt_Tamari_is_lattice}
			% 	The alt $\nu$-Tamari poset $ \altTam{\nu}{\delta} $ is isomorphic to the interval $ [\nu,1^\nu] $ in $ \Tam{\check\nu} $. Thus, it is a lattice.
			% \end{theorem}
		
		%  \begin{proof}
			%   The first part follows directly from~\Cref{prop_delta_poset_nuprime,prop_interval_in_nu-Tamari}.
			%   Since an interval in a lattice is also a lattice, we deduce that $ \altTam{\nu}{\delta} $ is a lattice.
			%   % $ \altTam{\nu}{\delta} $ is the interval $ [\nu,1^\nu] $ in $ \Tam{\check\nu} $. Thus, it is a lattice.
			%  \end{proof}
		
		\subsection{On $(\delta,\nu)$-trees}
		
		The alt $\nu$-Tamari lattice $ \altTam{\nu}{\delta} $ is the interval $ [\nu,1^\nu] $ in $ \Tam{\check\nu} $. So, it can be described as the rotation lattice of $\check\nu$-trees that are above the $\check\nu$-tree $T_\nu$ corresponding to $\nu$ in $\Tam{\check\nu}$. These trees can be described as maximal collections of pairwise compatible elements in a shape $F_{\delta,\nu}$ which we will now describe.
		This point of view is useful to show that all alt $\nu$-Tamari lattices have the same number of linear intervals of any length. 
		
		Let $\delta, \nu$ and $\check\nu$ as in Proposition~\ref{prop_delta_poset_nuprime}. 
		Let $F_{\check\nu}$ be the Ferrers diagram that lies weakly above $\check\nu$. 
		We consider the lattice path $\hat\nu$  that starts at the lowest right corner of $F_{\check\nu}$ (the point with coordinates $(\check\nu_0,0)$) which consists of the sequence of west and north steps 
		\begin{align}\label{eq_nu_hat}
			W^{\nu_0} N W^{\gamma_1} N W^{\gamma_2} \dots N W^{\gamma_n},
			\qquad \text{ for } \gamma_i=\nu_i-\delta_i.    
		\end{align}
		We define $F_{\delta,\nu}$ to be the subset of $F_{\check\nu}$ consisting of the boxes that are weakly above $\hat\nu$, and denote by~$L_{\delta,\nu}$ its set of lattice points. 
		A \emph{$(\delta,\nu)$-tree} is a maximal collection of pairwise $\check\nu$-compatible elements in $L_{\delta,\nu}$. 
		An example is illustrated on the right of Figure~\ref{fig_Ferrers_deltanu_tree}.
		
		\begin{figure}[htb]
			\centering
				\begin{center}
				\centering
					\begin{minipage}[c]{.27\linewidth}
						\centering
						\def\svgwidth{.9\linewidth}
						%% Creator: Inkscape 1.2.2 (b0a8486541, 2022-12-01), www.inkscape.org
%% PDF/EPS/PS + LaTeX output extension by Johan Engelen, 2010
%% Accompanies image file 'ferrer-nu.pdf' (pdf, eps, ps)
%%
%% To include the image in your LaTeX document, write
%%   \input{<filename>.pdf_tex}
%%  instead of
%%   \includegraphics{<filename>.pdf}
%% To scale the image, write
%%   \def\svgwidth{<desired width>}
%%   \input{<filename>.pdf_tex}
%%  instead of
%%   \includegraphics[width=<desired width>]{<filename>.pdf}
%%
%% Images with a different path to the parent latex file can
%% be accessed with the `import' package (which may need to be
%% installed) using
%%   \usepackage{import}
%% in the preamble, and then including the image with
%%   \import{<path to file>}{<filename>.pdf_tex}
%% Alternatively, one can specify
%%   \graphicspath{{<path to file>/}}
%% 
%% For more information, please see info/svg-inkscape on CTAN:
%%   http://tug.ctan.org/tex-archive/info/svg-inkscape
%%
\begingroup%
  \makeatletter%
  \providecommand\color[2][]{%
    \errmessage{(Inkscape) Color is used for the text in Inkscape, but the package 'color.sty' is not loaded}%
    \renewcommand\color[2][]{}%
  }%
  \providecommand\transparent[1]{%
    \errmessage{(Inkscape) Transparency is used (non-zero) for the text in Inkscape, but the package 'transparent.sty' is not loaded}%
    \renewcommand\transparent[1]{}%
  }%
  \providecommand\rotatebox[2]{#2}%
  \newcommand*\fsize{\dimexpr\f@size pt\relax}%
  \newcommand*\lineheight[1]{\fontsize{\fsize}{#1\fsize}\selectfont}%
  \ifx\svgwidth\undefined%
    \setlength{\unitlength}{62.25001658bp}%
    \ifx\svgscale\undefined%
      \relax%
    \else%
      \setlength{\unitlength}{\unitlength * \real{\svgscale}}%
    \fi%
  \else%
    \setlength{\unitlength}{\svgwidth}%
  \fi%
  \global\let\svgwidth\undefined%
  \global\let\svgscale\undefined%
  \makeatother%
  \begin{picture}(1,0.52200023)%
    \lineheight{1}%
    \setlength\tabcolsep{0pt}%
    \put(0,0){\includegraphics[width=\unitlength,page=1]{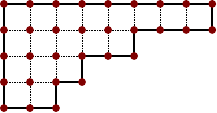}}%
    \put(1.00303597,0.3688675){\color[rgb]{0,0,0}\makebox(0,0)[lt]{\lineheight{1.25}\smash{\begin{tabular}[t]{l}$\nu_3=3$\end{tabular}}}}%
    \put(0.64158994,0.24838514){\color[rgb]{0,0,0}\makebox(0,0)[lt]{\lineheight{1.25}\smash{\begin{tabular}[t]{l}$\nu_2=2$\end{tabular}}}}%
    \put(0.40062626,0.12790348){\color[rgb]{0,0,0}\makebox(0,0)[lt]{\lineheight{1.25}\smash{\begin{tabular}[t]{l}$\nu_1=1$\end{tabular}}}}%
    \put(0.28014425,0.00742181){\color[rgb]{0,0,0}\makebox(0,0)[lt]{\lineheight{1.25}\smash{\begin{tabular}[t]{l}$\nu_0=2$\end{tabular}}}}%
  \end{picture}%
\endgroup%

						$F_{\delta^{\max},\nu}$
					\end{minipage}
				\qquad \qquad \quad
					\begin{minipage}[c]{.27\linewidth}
						\centering
						\def\svgwidth{.9\linewidth}
						%% Creator: Inkscape 1.2.2 (b0a8486541, 2022-12-01), www.inkscape.org
%% PDF/EPS/PS + LaTeX output extension by Johan Engelen, 2010
%% Accompanies image file 'ferrer-delta.pdf' (pdf, eps, ps)
%%
%% To include the image in your LaTeX document, write
%%   \input{<filename>.pdf_tex}
%%  instead of
%%   \includegraphics{<filename>.pdf}
%% To scale the image, write
%%   \def\svgwidth{<desired width>}
%%   \input{<filename>.pdf_tex}
%%  instead of
%%   \includegraphics[width=<desired width>]{<filename>.pdf}
%%
%% Images with a different path to the parent latex file can
%% be accessed with the `import' package (which may need to be
%% installed) using
%%   \usepackage{import}
%% in the preamble, and then including the image with
%%   \import{<path to file>}{<filename>.pdf_tex}
%% Alternatively, one can specify
%%   \graphicspath{{<path to file>/}}
%% 
%% For more information, please see info/svg-inkscape on CTAN:
%%   http://tug.ctan.org/tex-archive/info/svg-inkscape
%%
\begingroup%
  \makeatletter%
  \providecommand\color[2][]{%
    \errmessage{(Inkscape) Color is used for the text in Inkscape, but the package 'color.sty' is not loaded}%
    \renewcommand\color[2][]{}%
  }%
  \providecommand\transparent[1]{%
    \errmessage{(Inkscape) Transparency is used (non-zero) for the text in Inkscape, but the package 'transparent.sty' is not loaded}%
    \renewcommand\transparent[1]{}%
  }%
  \providecommand\rotatebox[2]{#2}%
  \newcommand*\fsize{\dimexpr\f@size pt\relax}%
  \newcommand*\lineheight[1]{\fontsize{\fsize}{#1\fsize}\selectfont}%
  \ifx\svgwidth\undefined%
    \setlength{\unitlength}{62.25001658bp}%
    \ifx\svgscale\undefined%
      \relax%
    \else%
      \setlength{\unitlength}{\unitlength * \real{\svgscale}}%
    \fi%
  \else%
    \setlength{\unitlength}{\svgwidth}%
  \fi%
  \global\let\svgwidth\undefined%
  \global\let\svgscale\undefined%
  \makeatother%
  \begin{picture}(1,0.51807191)%
    \lineheight{1}%
    \setlength\tabcolsep{0pt}%
    \put(0,0){\includegraphics[width=\unitlength,page=1]{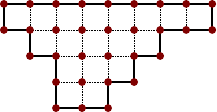}}%
    \put(1.00303632,0.36491556){\color[rgb]{0,0,0}\makebox(0,0)[lt]{\lineheight{1.25}\smash{\begin{tabular}[t]{l}$\delta_3=2$\end{tabular}}}}%
    \put(0.7620723,0.2444339){\color[rgb]{0,0,0}\makebox(0,0)[lt]{\lineheight{1.25}\smash{\begin{tabular}[t]{l}$\delta_2=1$\end{tabular}}}}%
    \put(0.64159029,0.12395223){\color[rgb]{0,0,0}\makebox(0,0)[lt]{\lineheight{1.25}\smash{\begin{tabular}[t]{l}$\delta_1=1$\end{tabular}}}}%
    \put(0.23763899,0.12604784){\color[rgb]{0,0,0}\makebox(0,0)[rt]{\lineheight{1.25}\smash{\begin{tabular}[t]{r}$\gamma_1=0$\end{tabular}}}}%
    \put(0.11715698,0.24652951){\color[rgb]{0,0,0}\makebox(0,0)[rt]{\lineheight{1.25}\smash{\begin{tabular}[t]{r}$\gamma_2=1$\end{tabular}}}}%
    \put(-0.00332469,0.36701187){\color[rgb]{0,0,0}\makebox(0,0)[rt]{\lineheight{1.25}\smash{\begin{tabular}[t]{r}$\gamma_3=1$\end{tabular}}}}%
  \end{picture}%
\endgroup%

						$F_{\delta,\nu}$
					\end{minipage}
				\qquad
					\begin{minipage}[c]{.27\linewidth}
						\centering
						\def\svgwidth{.9\linewidth}
						\import{Figures/}{delta-nu-tree.pdf_tex}
						
						$T$
					\end{minipage}
				\end{center}
			\caption{Left: The Ferrers diagram $F_{\delta^{\max},\nu}$ and its corresponding lattice points~$L_{\delta^{\max},\nu}$ for $\nu=EENENEENEEEN=(2,1,2,3,0)$ and $\delta^{\max}=(1,2,3,0)$.
				Middle: The Ferrers diagram $F_{\delta,\nu}$ and its corresponding lattice points $L_{\delta,\nu}$ for the same~$\nu$ and $\delta=(1,1,2,0)$; the path $\check \nu = EEEENENENEEN$ and $\hat \nu=WWNNWNWN$.
				Right: a $(\delta,\nu)$-tree. 
			}
			\label{fig_Ferrers_deltanu_tree}
		\end{figure}
		
		\begin{lemma} \label{lem_check_nu_tree}
			The $(\delta,\nu)$-trees are exactly the $\check{\nu}$-trees that are contained in~$L_{\delta,\nu}$.
		\end{lemma}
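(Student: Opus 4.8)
My plan is to prove the two inclusions separately; the first is immediate and the second carries all the content.

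For ``$\check\nu$-trees contained in $L_{\delta,\nu}$ are $(\delta,\nu)$-trees'': if $T$ is a $\check\nu$-tree with $T\subseteq L_{\delta,\nu}$, then $T$ is a maximal pairwise $\check\nu$-compatible subset of the larger set $L_{\check\nu}$, hence a fortiori maximal among pairwise $\check\nu$-compatible subsets of $L_{\delta,\nu}$ (any point we could add lies in $L_{\delta,\nu}\subseteq L_{\check\nu}$ and would already contradict maximality in $L_{\check\nu}$). So $T$ is a $(\delta,\nu)$-tree. For the reverse inclusion, let $T$ be a $(\delta,\nu)$-tree. Since $L_{\delta,\nu}\subseteq L_{\check\nu}$ and $T$ is pairwise $\check\nu$-compatible, I extend $T$ to a $\check\nu$-tree $\widehat T\supseteq T$. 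It suffices to show $\widehat T=T$: by maximality of $T$ in $L_{\delta,\nu}$, any extra point of $\widehat T$ must lie in $L_{\check\nu}\setminus L_{\delta,\nu}$, i.e.\ strictly below $\hat\nu$. Thus the whole statement reduces to the claim that \emph{no lattice point of $F_{\check\nu}$ strictly below $\hat\nu$ is $\check\nu$-compatible with all of $T$}.

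To attack this I would work in coordinates adapted to $\hat\nu$. Writing the $x$-coordinates of the successive north steps of $\hat\nu$ as a weakly decreasing sequence $a_1\ge a_2\ge\cdots$, a routine check from \eqref{eq_nu_hat} and the shape of $F_{\check\nu}$ shows $(x,y)\in L_{\delta,\nu}$ exactly when $a_{y+1}\le x\le \sigma_y$, where $\sigma_y=\check\nu_0+\cdots+\check\nu_y$ records the right boundary of $F_{\check\nu}$. The key elementary observation is that the top endpoint $(a_{i+1},i+1)$ of each north step of $\hat\nu$ has \emph{no} point of $L_{\delta,\nu}$ strictly to its southwest, because any point of $L_{\delta,\nu}$ at height $\le i$ already satisfies $x\ge a_{i+1}$. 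Consequently a point $p$ below $\hat\nu$ in row $i$ can only be $\check\nu$-incompatible with points of $T$ lying strictly northeast of it; moreover moving $p$ rightward within its row only relaxes the incompatibility conditions, so the obstruction for the \emph{rightmost} below-$\hat\nu$ point of row $i$ is the strongest. One then computes that this point can be adjoined to $T$ if and only if $T$ avoids the solid sub-grid $B_i=[a_{i+1},\sigma_i]\times[i+1,n]\subseteq L_{\delta,\nu}$. So the claim is equivalent to: a maximal pairwise compatible $T\subseteq L_{\delta,\nu}$ meets every such $B_i$.

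The crux, and the step I expect to be the main obstacle, is this last point, which has real combinatorial content (it is exactly the phenomenon that distinguishes $L_{\delta,\nu}$ from a genuine Ferrers region). I would prove it by an ``excluder'' argument: if $T$ were disjoint from $B_i$, then for each point of $B_i$ — starting with the corner $(a_{i+1},i+1)$ — maximality of $T$ forces a witnessing point of $T$ incompatible with it, and the southwest-freeness above rules out any southwest witness, so every witness lies northeast; chasing these witnesses up the grid either pushes past row $n$ or forces two mutually incompatible points into $T$, a contradiction. Equivalently — and this is the cleanest packaging if it can be established directly — one shows every $(\delta,\nu)$-tree has exactly $m+n+1$ elements, the common cardinality of $\check\nu$-trees; since a pairwise $\check\nu$-compatible set of this maximal size cannot be properly extended inside $L_{\check\nu}$, it is automatically a $\check\nu$-tree, and $\widehat T=T$ follows at once. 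Either route isolates the same difficulty, namely controlling how maximality in the restricted region $L_{\delta,\nu}$ propagates across the boundary $\hat\nu$; the remaining bookkeeping with the inequalities $\delta_i\le\nu_i$ is routine.
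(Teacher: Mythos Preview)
Your reduction is sound: both directions are set up correctly, and the problem is correctly isolated as showing that a maximal $\check\nu$-compatible set in $L_{\delta,\nu}$ cannot be enlarged by a point of $L_{\check\nu}\setminus L_{\delta,\nu}$, equivalently that every $(\delta,\nu)$-tree already has $m+n+1$ elements. You explicitly name this cardinality formulation as the ``cleanest packaging'', and that is exactly what the paper proves. The paper's execution is a direct reconstruction: label the nodes $p_0,\dots,p_r$ bottom-to-top, right-to-left; argue from maximality that each $p_j$ is forced to the rightmost non-forbidden position of its row; derive the bookkeeping identity $j=\operatorname{forb}(p_j)+\operatorname{height}(p_j)$; and conclude $r=m+n$ since otherwise an unforbidden lattice point would remain on the top row. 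You identify this route but do not carry it out.

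Your primary ``excluder'' route, as sketched, has an imprecision that matters. The southwest-freeness you appeal to holds only at the corner $(a_{i+1},i+1)$: for a point $(a_{i+1},y)\in B_i$ with $y>i+1$, a point of $L_{\delta,\nu}$ with height in $[i+1,y-1]$ and $x$-coordinate in $[a_{y},a_{i+1})$ is a legitimate southwest witness, so the blanket claim ``every witness lies northeast'' is false past the first step. The chase can be repaired --- such a southwest witness turns out to be $\check\nu$-incompatible with the northeast witness already produced for the corner, yielding the ``two mutually incompatible points'' contradiction you gesture at --- but this interaction has to be verified at each stage and is not what you wrote. The paper's counting argument avoids this bookkeeping entirely, which is presumably why you yourself flag it as the cleaner option.
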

		%Clément : I struggled with the proof but... if the lemma is wrong we can still add that a (delta,nu)-tree has to be a check nu-tree.
		
		\begin{proof}
			A $\check{\nu}$-tree that is contained in~$L_{\delta,\nu}$ is automatically a $(\delta,\nu)$-tree by definition. So, we just need to check that $(\delta,\nu)$-trees are $\check{\nu}$-trees.
			By definition, a $(\delta,\nu)$-tree is a maximal collection of pairwise $\check\nu$-compatible elements in~$L_{\delta,\nu}$. 
			As $L_{\delta,\nu} \subseteq L_{\check{\nu}}$, we want to prove that the maximality in $ L_{\delta,\nu} $ implies the maximality in $ L_{\check\nu} $.
			
			Recall that the paths $\nu$ and $\check \nu$ have the same starting point $(0,0)$ and the same ending point $(m,n)$, and that every $\nu$-tree and every $\check \nu$-tree has exactly $m+n+1$ nodes (equal to the number of lattice points in $\nu$ and $\check \nu$). Furthermore, the shape $F_{\delta,\nu}$ fits in the $m\times n$ box with the top corners being $(0,n)$ and $(m,n)$. In our example in~\Cref{fig:BijectionLeftIntervals}, $m=11$ and $n=7$. The $\nu$-tree and the $(\delta,\nu)$-tree shown in this figure both have $m+n+1=19$ nodes. We want to show that every $(\delta,\nu)$-tree has exactly~$m+n+1$ elements. 
			
			Let  $ T $ be a $(\delta,\nu)$-tree. Label its elements $p_0,p_1,\dots ,p_r$ from bottom to top, from right to left. We will show that $r=m+n$, which implies that $T$ has $m+n+1$ elements as desired.
			
			Let us reconstruct $T$ recursively, by adding the elements $p_0,p_1,\dots,p_r$ one at a time in order. Note that if $p_i$ is not the left most element in its row, then all the lattice points above $p_i$ are forbidden in the next steps, because they are incompatible with an element $p_j\in T$ that is to the left of $p_i$ in the same row. 
			
			Now, when we add an element $p_j$ in the process of reconstructing $T$, then $p_j$ is necessarily located at the right most position of its row that is not forbidden by any element before. Otherwise, let $p_j\in T$ be the node with smallest label that does not satisfy that property, and let $q$ be the right most lattice point in the same row that is not forbidden by any element $p_i$ with $i<j$. In particular, $q$ is on the right of~$p_j$ by assumption, and $q$ is compatible with every $p_i$ with $i<j$. Moreover, $q$ is also compatible with $p_k\in T$ with $k>j$, otherwise $p_k,p_j$ would be incompatible. So, we can add the element $q$ to $T$, creating a new compatible set, contradicting the maximality of $T$.   
			
			Furthermore, the following relation holds,
			\[
			j = \operatorname{forb}(p_j) + \operatorname{height}(p_j)
			\]
			where $\operatorname{height}(p_j)$ is the height of $p_j\in T$ and $\operatorname{forb}(p_j)$ is the number of $p_i\in T$ with $i<j$ such that $p_i$ is not the left most node of $T$ in its row. 
			That is, $\operatorname{forb}(p_j)$ is the number of nodes before $p_j$ that forbid the positions above them. This formula is clear because the $j$ nodes $p_0,\dots,p_{j-1}$ appearing before $p_j$ either forbid positions above them (not the left most node of their row) or increase the height by one (the left most node of their row).
			
			If we apply this formula to the last node $p_r$ and assume that $r<m+n$, then 
			\[
			\operatorname{forb}(p_r) + \operatorname{height}(p_r) < m+n.
			\]
			We can assume that $\operatorname{height}(p_r)=n$ (maximum possible height), otherwise we could add the top left corner of $F_{\delta,\nu}$ to $T$, creating a bigger compatible set and contradicting the maximality $T$. 
			This implies that $\operatorname{forb}(p_r)<m$, which means that on the top row there are still some lattice points that are not forbidden. Adding one of these points contradicts the maximality of $T$. 
			As a consequence, we have proven that $r=m+n$ as desired.
		\end{proof}

		We define the \emph{$(\delta,\nu)$-right flushing} $\flush_{\delta,\nu}$ as the restriction of the right flushing bijection $\flush_{\check \nu}$ (with respect to $\check \nu$) to set of $\nu$-paths (thought as the subset of $\check \nu$-paths that are above $\nu$).  
		
		\begin{proposition}\label{prop_delta_nu_flushing}
			The map $\flush_{\delta,\nu}$ is a bijection between the set of $\nu$-paths and the set of $(\delta,\nu)$-trees. Moreover, 
			two $\nu$-paths are related by a $\delta$-rotation 
			$ \mu \drot{\delta} \mu' $ 
			if and only if the corresponding trees 
			% $T=\flush_{\delta,\nu}(\mu)$ and $T'=\flush_{\delta,\nu}(\mu')$
			are related by a ${\check \nu}$-rotation
			$T \drot{\check \nu} T'$.  
			% if $\flush_{\delta,\nu}(\mu)=T$ and $\flush_{\delta,\nu}(\mu')=T'$,
			% then $ \mu \drot{\delta} \mu' $ if and only if $T'$ is obtained from $T$ by applying a $\check \nu$-rotation.  
		\end{proposition}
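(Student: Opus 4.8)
The plan is to deduce everything from the already-established bijection $\flush_{\check\nu}$ between $\check\nu$-paths and $\check\nu$-trees, together with \Cref{lem_check_nu_tree}, which identifies the $(\delta,\nu)$-trees with the $\check\nu$-trees contained in $L_{\delta,\nu}$. Since $\flush_{\delta,\nu}$ is by definition the restriction of $\flush_{\check\nu}$ to the $\nu$-paths, injectivity is automatic, and the first statement reduces to the claim: for a $\check\nu$-path $\mu=(\mu_0,\dots,\mu_n)$, the tree $\flush_{\check\nu}(\mu)$ lies in $L_{\delta,\nu}$ if and only if $\mu$ is weakly above $\nu$. I would first record the bookkeeping that drives the argument. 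At height $i$ the points of $L_{\check\nu}$ are those with abscissa between $0$ and $C_i:=\sum_{k=0}^{i}\check\nu_k$, and passing to $L_{\delta,\nu}$ only removes the points with abscissa strictly less than $x_i:=\sum_{k=i+1}^{n}(\nu_k-\delta_k)$ (the left boundary cut out by $\hat\nu$); in particular both shapes share the same right edge $C_i$. A direct computation using $\check\nu_0=\sum_{k=0}^n\nu_k-\sum_{k=1}^n\delta_k$ gives the width identity
\[
C_i-x_i=\sum_{k=0}^{i}\nu_k=:N_i .
\]

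The combinatorial heart is a statement about the right flushing: in $T=\flush_{\check\nu}(\mu)$ the nodes that are not leftmost in their row occupy pairwise distinct columns. Indeed, a non-leftmost node forbids its whole column above it, hence it is the topmost node of that column, so no column contains two non-leftmost nodes. Consequently, reconstructing $T$ row by row from bottom to top, the set $B_i$ of columns forbidden before reaching height $i$ has size exactly $M_{i-1}:=\sum_{k<i}\mu_k$, one forbidden column per non-leftmost node below. Since the $\mu_i+1$ nodes of row $i$ are placed in the rightmost available positions of $[0,C_i]$ and $[x_i,C_i]$ is anchored at $C_i$, the leftmost node of row $i$ satisfies $\mathrm{leftmost}_i\ge x_i$ precisely when $[x_i,C_i]$ contains at least $\mu_i+1$ available positions, that is, when
\[
\bigl|B_i\cap[x_i,C_i]\bigr|\le (C_i-x_i)-\mu_i=N_i-\mu_i .
\]

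Both implications then fall out of this inequality. If $\mu$ is weakly above $\nu$, then $M_i\le N_i$ for all $i$, so $M_{i-1}\le N_i-\mu_i$; combining with $|B_i\cap[x_i,C_i]|\le|B_i|=M_{i-1}$ shows the condition holds at every height, whence $T\subseteq L_{\delta,\nu}$. For the converse I argue by contraposition: if $\mu$ is not weakly above $\nu$, pick the smallest $j$ with $M_j>N_j$. Either some forbidden column below height $j$ lies strictly left of $x_j$ — and then, since $x_k$ is weakly decreasing in $k$, the non-leftmost node producing it already sits strictly left of $\hat\nu$ at its own height, so $T\not\subseteq L_{\delta,\nu}$ — or every forbidden column lies in $[x_j,C_j]$, whence $|B_j\cap[x_j,C_j]|=M_{j-1}>N_j-\mu_j$, the displayed condition fails at height $j$, and a node of row $j$ is forced strictly left of $x_j$. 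In all cases $\flush_{\check\nu}(\mu)\not\subseteq L_{\delta,\nu}$, which proves the claim and hence the bijection onto the $(\delta,\nu)$-trees.

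Finally, the rotation statement is essentially free. By \Cref{prop_delta_poset_nuprime}~\eqref{prop_delta_poset_nuprime_item1} a $\delta$-rotation of a $\nu$-path coincides with a $\check\nu$-rotation, and by the $\nu$-tree theorem $\flush_{\check\nu}$ is a poset isomorphism $\Tam{\check\nu}\cong\TamTrees{\check\nu}$ carrying $\check\nu$-rotations of paths to $\check\nu$-rotations of trees. Restricting this correspondence to the $\nu$-paths and their images, the $(\delta,\nu)$-trees identified in the first part, yields $\mu\drot{\delta}\mu'$ if and only if $T\drot{\check\nu}T'$. The step I expect to require the most care is the second paragraph: establishing the distinct-columns observation that pins down $|B_i|=M_{i-1}$, and carrying out the case analysis in the converse that compares the forbidden columns against the staircase boundary $\hat\nu$. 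Everything else is bookkeeping organized by the width identity $C_i-x_i=N_i$.
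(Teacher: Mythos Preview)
Your argument is correct and complete, but it proceeds quite differently from the paper's proof. The paper handles the forward direction structurally: it first checks by hand that the bottom tree $T_\nu=\flush_{\check\nu}(\nu)$ lies in $L_{\delta,\nu}$, and then observes that every $\nu$-path is reached from $\nu$ by $\check\nu$-rotations, which visibly keep the tree inside $L_{\delta,\nu}$. For the backward direction the paper uses a clean counting bound: the restriction of any compatible set to heights $\le k$ in $F_{\delta,\nu}$ has at most $\nu_0+\dots+\nu_k+k+1$ elements, which immediately forces $\flush_{\check\nu}^{-1}(T)$ to be weakly above $\nu$. Your approach instead analyzes the right-flushing algorithm directly: the width identity $C_i-x_i=N_i$ together with the observation that non-leftmost nodes occupy distinct columns pins down $|B_i|=M_{i-1}$, and both directions become the single row-by-row inequality $|B_i\cap[x_i,C_i]|\le N_i-\mu_i$. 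This is more hands-on but entirely self-contained, while the paper's route is shorter because it leverages the already-known poset structure. The rotation statement is handled identically in both.

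One remark: your Case~1 in the contrapositive is in fact vacuous. Since $j$ is chosen minimal with $M_j>N_j$, the forward inequality applied at each height $i<j$ already gives $\mathrm{leftmost}_i\ge x_i$, so every non-leftmost node below height $j$ sits in a column $\ge x_i\ge x_j$; hence $B_j\subseteq[x_j,C_j]$ automatically and you are always in Case~2. This does not affect correctness, but you can streamline the argument by dropping the case split.
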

		
		\begin{proof} 
			By~\Cref{prop_delta_poset_nuprime}, $\delta$-rotations of a~$\nu$-path $ \mu $ coincide with $\check\nu$-rotations of $ \mu $, and
			the right flushing bijection $\flush_{\check\nu}$ transforms $\check \nu$-rotations on paths to $\check\nu$-rotations on the corresponding trees. 
			Therefore,
			the second part of the proposition is clear. 
			It remains to show that $\mu$ is a $\nu$-path if and only if $\flush_{\check\nu}(\mu)$ is a $(\delta,\nu)$-tree, or equivalently a $\check \nu$-tree that is contained in~$L_{\delta,\nu}$. 
			
			We start by proving the forward direction. 
			First, note that the image of the bottom path $T_\nu=\flush_{\check\nu}(\nu)$ is contained in~$L_{\delta,\nu}$. 
			More precisely, the shape $F_{\delta,\nu}$ has $\nu_k$ east steps on its boundary at height $k$. For $k>0$, some of these east steps (exactly $\nu_k-\delta_k$) are on the left boundary, and some (exactly~$\delta_k$) are on the right boundary. The $\nu_k+1$ nodes of $T_\nu$ at height $k$ consist of the $\nu_k-\delta_k+1$ points on the left boundary, and the $\delta_k$ end points of the east steps of the right boundary. At height~$k=0$, the~$\nu_0+1$ nodes of $T$ are all the lattice points at the bottom of $F_{\delta,\nu}$.  
			This shows that $T_\nu$ is contained in~$L_{\delta,\nu}$.  
			
			Now, every $\nu$-path $\mu$ can be obtained by applying a sequence of $\check \nu$-rotations to the bottom path~$\nu$. Its image $\flush_{\check\nu}(\mu)$ is a $\check \nu$-tree that can be obtained by applying the corresponding sequence of $\check \nu$-rotations to the tree $T_\nu$. Since $T_\nu$ is contained in~$L_{\delta,\nu}$ and such rotations preserve this property, then $\flush_{\check\nu}(\mu)$ is also contained in~$L_{\delta,\nu}$. 
			This finishes the proof of the forward direction. 
			
			The backward direction is equivalent to the following statement: if $T$ is a $\check \nu$-tree contained in~$L_{\delta,\nu}$ then $\mu=\flush_{\check\nu}^{-1}(T)$ is weakly above $\nu$. 
			This is equivalent to show that the number of nodes in $T$ at heights less than or equal to $k$ is at most $\nu_0+\dots +\nu_k+k+1$.
			
			For $0\leq k \leq n$, let $L_k$ (resp. $F_k$) be the the restriction of $L_{\delta,\nu}$ (resp. $F_{\delta,\nu}$) to the points with height less than or equal to $k$. The width of $F_k$ is equal to $\nu_0+\dots+\nu_k$. The maximal number of compatible lattice points inside $L_k$ is equal to $\nu_0+\dots+\nu_k+k+1$. 
			The restriction of $T$ to $L_k$ is a compatible set (not necessarily maximal). The result follows. 
		\end{proof}

		The $(\delta,\nu)$-right flushing bijection from~\mbox{$\nu$-paths} to $(\delta,\nu)$-trees is described in exactly the same way as in Section~\ref{sec_nu_trees}: 
		we recursively add $\mu_i+1$ nodes to the tree inside the shape~$F_{\delta,\nu}$ from right to left, from bottom to top, while avoiding the forbidden positions above a node which is not the left most node in a row.  
		Figure~\ref{fig:BijectionLeftIntervals} shows the image of the path~\mbox{$\mu=(1,0,1,1,3,2,1,2)$} for $\delta^{max}=(1,0,2,2,0,3,0)$ (left) and for $\delta=(0,0,1,2,0,1,0)$ (right), where the base path is $\nu=(3,1,0,2,2,0,3,0)$.
		
		\begin{definition}
			The \emph{rotation poset of $(\delta,\nu)$-trees} $\altTamTrees{\nu}{\delta}$ is the reflexive transitive closure of \mbox{$\check\nu$-rotations} on~$(\delta,\nu)$-trees.   
		\end{definition}
		
		The three examples of the rotation poset of $(\delta,\nu)$-trees for $\nu=ENEEN=(1,2,0)$ are illustrated on~\Cref{fig_altnu_lattices_ENEEN_trees}. 
		
		\begin{theorem}
			The alt $\nu$-Tamari lattice is isomorphic to the rotation poset of $(\delta,\nu)$-trees: $$\altTam{\nu}{\delta}\cong \altTamTrees{\nu}{\delta}.$$  
			In particular, the rotation poset of $(\delta,\nu)$-trees is a lattice.  
		\end{theorem}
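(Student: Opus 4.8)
The plan is to promote the bijection $\flush_{\delta,\nu}$ of Proposition~\ref{prop_delta_nu_flushing} to a full order isomorphism, exploiting the fact that both posets are defined as the reflexive transitive closures of their respective generating relations. Since $\altTam{\nu}{\delta}$ is the reflexive transitive closure of $\delta$-rotations on $\nu$-paths and $\altTamTrees{\nu}{\delta}$ is the reflexive transitive closure of $\check\nu$-rotations on $(\delta,\nu)$-trees, it suffices to exhibit a bijection between the two ground sets that carries the generating relations of one exactly onto the generating relations of the other, in both directions.

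First I would recall that Proposition~\ref{prop_delta_nu_flushing} already supplies precisely this: the map $\flush_{\delta,\nu}$ is a bijection between $\nu$-paths and $(\delta,\nu)$-trees, and $\mu \drot{\delta} \mu'$ holds if and only if $\flush_{\delta,\nu}(\mu) \drot{\check\nu} \flush_{\delta,\nu}(\mu')$. In particular, the image of a $\delta$-rotation is a $\check\nu$-rotation between two $(\delta,\nu)$-trees, and conversely every $\check\nu$-rotation between $(\delta,\nu)$-trees arises in this way. Applying the bijection term by term to a chain of $\delta$-rotations from $P$ to $Q$ therefore yields a chain of $\check\nu$-rotations between the corresponding $(\delta,\nu)$-trees, and symmetrically in the reverse direction. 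Hence $P \leq Q$ in $\altTam{\nu}{\delta}$ if and only if $\flush_{\delta,\nu}(P) \leq \flush_{\delta,\nu}(Q)$ in $\altTamTrees{\nu}{\delta}$, which is exactly the assertion that $\flush_{\delta,\nu}$ is an isomorphism of posets.

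The lattice claim is then immediate: by Corollary~\ref{cor_alt_Tamari_is_lattice} the poset $\altTam{\nu}{\delta}$ is a lattice, and being a lattice is an order-isomorphism invariant, so the rotation poset $\altTamTrees{\nu}{\delta}$ is a lattice as well.

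I do not expect a serious obstacle here, since the substantive work has already been carried out: Lemma~\ref{lem_check_nu_tree} identifies the $(\delta,\nu)$-trees with the $\check\nu$-trees contained in $L_{\delta,\nu}$, and Proposition~\ref{prop_delta_nu_flushing} establishes both that $\flush_{\delta,\nu}$ lands in this set and that it intertwines $\delta$-rotations with $\check\nu$-rotations. The only point requiring a moment's care is the reverse direction of the rotation correspondence, namely confirming that a $\check\nu$-rotation between two $(\delta,\nu)$-trees never leaves $L_{\delta,\nu}$ and hence pulls back to a genuine $\delta$-rotation of $\nu$-paths; but this is exactly the content of the biconditional in Proposition~\ref{prop_delta_nu_flushing} combined with Lemma~\ref{lem_check_nu_tree}, so no new argument is needed.
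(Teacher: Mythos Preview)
Your proposal is correct and follows essentially the same approach as the paper: invoke Proposition~\ref{prop_delta_nu_flushing} to see that $\flush_{\delta,\nu}$ is a bijection intertwining the generating relations of the two posets, hence an order isomorphism, and then quote Corollary~\ref{cor_alt_Tamari_is_lattice} for the lattice property. Your write-up is more explicit than the paper's about why matching the generating relations suffices (both posets being defined as reflexive transitive closures), but the underlying argument is the same.
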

		
		\begin{proof}
			%    By~\Cref{prop_interval_in_nu-Tamari}, 
			%    the alt $\nu$-Tamari lattice $ \altTam{\nu}{\delta} $ is isomorphic to the interval $[\nu,1^\nu]$ in $ \Tam{\check\nu} $.
			% The image of a $\nu$-path by the right flushing bijection with respect to $ \check{\nu} $ is a $(\delta,\nu)$-tree and vice-versa.
			The alt $\nu$-Tamari lattice is the poset on $\nu$-paths whose covering relations are given by \mbox{$\delta$-rotations}.
			The rotation poset of $(\delta,\nu)$-trees is poset on $(\delta,\nu)$-trees whose covering relations are \mbox{$\check\nu$-rotations}.
			The result is then a consequence of~\Cref{prop_delta_nu_flushing}.
			The lattice property was proven for the alt $\nu$-Tamari lattice in~\Cref{cor_alt_Tamari_is_lattice}.
		\end{proof}

		\section{Left and right intervals in the alt $\nu$-Tamari lattice}
		Since $ \altTam{\nu}{\delta} $ is an interval in $\Tam{\check\nu}$, its linear intervals are linear intervals in $\Tam{\check\nu}$. 
		In terms of trees, this gives the following simple characterization.
		
		\begin{definition}
			An interval $[T, T']$ in $\altTamTrees{\nu}{\delta}$ is a \emph{left interval} (resp. \emph{right interval}) if $[T, T']$ is a left interval (resp. right interval) in $\TamTrees{\check \nu}$. 
		\end{definition}

		\begin{proposition} \label{prop_linear_intervals_altnu}
			The non trivial linear intervals in $\altTamTrees{\nu}{\delta}$ are 
			% the left and right intervals~$[T,T']$ in the rotation lattice of~$(\delta,\nu)$-trees.
			either left or right intervals.
		\end{proposition}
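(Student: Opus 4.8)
The plan is to reduce the statement directly to \Cref{prop_left_right_vTam}, using the fact---recorded at the start of this section and established in \Cref{prop_delta_poset_nuprime}~\eqref{prop_delta_poset_nuprime_item4} together with the isomorphism $\altTam{\nu}{\delta}\cong\altTamTrees{\nu}{\delta}$---that $\altTamTrees{\nu}{\delta}$ is (isomorphic to) the interval $[\nu,1^\nu]$ inside $\Tam{\check\nu}\cong\TamTrees{\check\nu}$. The whole content of the proof is the observation that an interval of a lattice is order-convex, so that linear intervals of the sub-lattice are literally the same objects as linear intervals of the ambient lattice $\TamTrees{\check\nu}$; the classification is then already known.

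First I would record the elementary poset fact: if $S=[a,b]$ in a poset $P$ and $x,y\in S$ with $x\le y$, then every $z\in P$ with $x\le z\le y$ satisfies $a\le x\le z\le y\le b$, hence $z\in S$. Thus the interval $[x,y]$ computed inside $S$ coincides as a set with the interval $[x,y]$ computed in $P$, and the induced orders agree. The same convexity shows that the covering relations of $S$ are exactly the covering relations of $P$ between elements of $S$, consistently with \Cref{prop_delta_poset_nuprime}~\eqref{prop_delta_poset_nuprime_item3}. Applying this with $P=\TamTrees{\check\nu}$ and $S=\altTamTrees{\nu}{\delta}=[\nu,1^\nu]$, any non trivial linear interval $[T,T']$ of $\altTamTrees{\nu}{\delta}$ is the same chain of trees as the corresponding interval of $\TamTrees{\check\nu}$, and in particular it is a non trivial linear interval of $\TamTrees{\check\nu}$.

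Next I would invoke \Cref{prop_left_right_vTam}, which asserts that every non trivial linear interval of $\TamTrees{\check\nu}$ is either a left interval or a right interval in $\TamTrees{\check\nu}$. By the very definition of left and right intervals in $\altTamTrees{\nu}{\delta}$---namely, those intervals which are left, resp.\ right, intervals in $\TamTrees{\check\nu}$---this immediately gives the desired dichotomy. For completeness I would also note that the entire left or right interval remains inside $\altTamTrees{\nu}{\delta}$: its unique maximal chain from $T$ to $T'$ consists of trees lying between $T$ and $T'$, so order-convexity of $[\nu,1^\nu]$ forces each intermediate tree to be contained in $L_{\delta,\nu}$, i.e.\ to be a $(\delta,\nu)$-tree by \Cref{lem_check_nu_tree}.

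There is essentially no hard step here; the substance is entirely the reduction via order-convexity, and no fresh combinatorial analysis of $(\delta,\nu)$-trees is required beyond what was carried out for $\TamTrees{\check\nu}$ in \Cref{prop_left_right_vTam}. The only point deserving care is to read the notions of left and right interval in the ambient lattice $\TamTrees{\check\nu}$ and to check that, once both endpoints lie in the sub-lattice, all intermediate trees remain $(\delta,\nu)$-trees---both of which are immediate consequences of the interval being order-convex.
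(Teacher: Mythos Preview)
Your proposal is correct and follows essentially the same approach as the paper, which simply states that the result is a direct consequence of \Cref{prop_left_right_vTam}. You have merely made explicit the order-convexity argument that justifies why linear intervals of the subinterval $\altTamTrees{\nu}{\delta}=[\nu,1^\nu]$ coincide with linear intervals of the ambient lattice $\TamTrees{\check\nu}$, which the paper leaves implicit.
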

		
		\begin{proof}
			This is a direct consequence of~\Cref{prop_left_right_vTam}. 
		\end{proof}
		
		In this section, we aim to characterize the left and right intervals in terms of certain row and (reduced) column vectors associated to the~$(\delta,\nu)$-trees. This will be used in~\Cref{sec_bijections_linearintervals}, to show that the number of linear intervals in the alt $\nu$-Tamari lattice~$\altTam{\nu}{\delta}$ is independent of the choice of $\delta$.   
		
		\subsection{Row vectors and left intervals}
		The \emph{row vector} of a $(\delta,\nu)$-tree $T$ is the vector 
		$$r(T)=(r_0,\dots,r_n),$$ 
		where $r_i+1$ is the number of nodes of $T$ at height $i$. 
		
		\begin{proposition}\label{prop_row_vectors_characterization}
			A $(\delta,\nu)$-tree $T$ is completely characterized by its row vector. Moreover, $(r_0,\dots,r_n)$ is the row vector of some $(\delta,\nu)$-tree if and only if
			
			\begin{enumerate}
				\item $r_i\geq 0$ for all $i$,  \label{prop_row_vectors_characterization_item1}
				\item $\sum_{i=0}^j r_i \leq \sum_{i=0}^j \nu_i 
				$ for all $j$, and \label{prop_row_vectors_characterization_item2}
				\item $\sum_{i=0}^n r_i =  \sum_{i=0}^n \nu_i$.  \label{prop_row_vectors_characterization_item3}
			\end{enumerate}
		\end{proposition}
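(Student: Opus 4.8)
The plan is to deduce the whole statement from the right flushing bijection of \Cref{prop_delta_nu_flushing}, after one elementary observation: the row vector of a $(\delta,\nu)$-tree coincides with the $\nu$-path it comes from. Recall that $\flush_{\check\nu}$ sends a $\check\nu$-path $(\mu_0,\dots,\mu_n)$ to the unique $\check\nu$-tree having $\mu_i+1$ nodes at height $i$, and that $\flush_{\delta,\nu}$ is the restriction of $\flush_{\check\nu}$ to $\nu$-paths. Since the row vector $r(T)=(r_0,\dots,r_n)$ is defined by requiring $r_i+1$ to be the number of nodes of $T$ at height $i$, we obtain immediately that $r(\flush_{\delta,\nu}(\mu))=(\mu_0,\dots,\mu_n)=\mu$. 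In other words, under the identification of a $\nu$-path with its coordinate vector, the assignment $T\mapsto r(T)$ is exactly the inverse flushing $\flush_{\delta,\nu}^{-1}$.

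With this identity in hand, the first assertion (that $T$ is completely determined by $r(T)$) is immediate. By \Cref{prop_delta_nu_flushing} the map $\flush_{\delta,\nu}$ is a bijection between $\nu$-paths and $(\delta,\nu)$-trees; hence if two $(\delta,\nu)$-trees $T,T'$ satisfy $r(T)=r(T')$, then the $\nu$-paths $\flush_{\delta,\nu}^{-1}(T)=r(T)$ and $\flush_{\delta,\nu}^{-1}(T')=r(T')$ agree, and applying $\flush_{\delta,\nu}$ gives $T=T'$. Thus $T$ is recovered from its row vector as $T=\flush_{\delta,\nu}(r(T))$.

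For the characterization of which vectors occur, I would argue that the set of row vectors of $(\delta,\nu)$-trees is exactly the set of $\nu$-paths. Indeed, $\flush_{\delta,\nu}$ is onto the $(\delta,\nu)$-trees, and by the observation above the composite $\mu\mapsto r(\flush_{\delta,\nu}(\mu))$ is the identity; so $(r_0,\dots,r_n)$ is a row vector of some $(\delta,\nu)$-tree if and only if it is the coordinate vector of a $\nu$-path. Finally, I would invoke the defining inequalities of a $\nu$-path recalled at the start of Section~2, namely that $(\mu_0,\dots,\mu_n)$ is a $\nu$-path precisely when $\mu_i\ge 0$ for all $i$, $\sum_{i=0}^{j}\mu_i\le \sum_{i=0}^{j}\nu_i$ for all $j$, and $\sum_{i=0}^{n}\mu_i=\sum_{i=0}^{n}\nu_i$. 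These are verbatim conditions~\eqref{prop_row_vectors_characterization_item1}, \eqref{prop_row_vectors_characterization_item2} and \eqref{prop_row_vectors_characterization_item3}, which finishes the proof.

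The only step requiring genuine care, and really the crux of the argument, is the identification $r(\flush_{\delta,\nu}(\mu))=\mu$; once this is granted, everything else is a formal consequence of \Cref{prop_delta_nu_flushing} and the definition of a $\nu$-path. If one instead wanted a self-contained proof avoiding the flushing bijection, the main obstacle would be to establish directly that every $(\delta,\nu)$-tree has exactly a prescribed number of nodes at each height and that these counts can be chosen freely subject to \eqref{prop_row_vectors_characterization_item1}--\eqref{prop_row_vectors_characterization_item3}; this is essentially the node-counting bookkeeping already carried out in the proof of \Cref{lem_check_nu_tree}.
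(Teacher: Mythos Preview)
Your proof is correct and follows essentially the same approach as the paper: both arguments invoke \Cref{prop_delta_nu_flushing} to identify $(\delta,\nu)$-trees with $\nu$-paths via the right flushing bijection, observe that this bijection preserves the number of nodes at each height (so that $r(T)$ recovers the underlying $\nu$-path), and then read off conditions \eqref{prop_row_vectors_characterization_item1}--\eqref{prop_row_vectors_characterization_item3} from the defining inequalities of a $\nu$-path. Your write-up is a bit more explicit about the identity $r(\flush_{\delta,\nu}(\mu))=\mu$, but the substance is identical.
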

		\begin{proof}
			By~\Cref{prop_delta_nu_flushing}, the map $\flush_{\delta,\nu}$ is a bijection between the set of $\nu$-paths and the set of~$(\delta,\nu)$-trees. 
			Moreover, this map preserves the number of points at each height, and therefore the row vector. 
			Since $\nu$-paths are characterized by their row vectors, then $(\delta,\nu)$-trees are characterized by their row vectors as well. 
			
			Furthermore, via the map $\flush_{\delta,\nu}$, characterizing the row vectors of $(\delta,\nu)$-trees is equivalent to characterizing the row vectors of $\nu$-paths. 
			Condition~\eqref{prop_row_vectors_characterization_item1} just says that every $\nu$-path $\mu$ has at least one lattice point at each height.
			Condition~\eqref{prop_row_vectors_characterization_item2} says that $\mu$ is weakly above $\nu$, and Condition~\eqref{prop_row_vectors_characterization_item3} says that~$\mu$ and~$\nu$ have the same ending points. 
		\end{proof}
		
		Given a $(\delta,\nu)$-tree $T$, we say that an \emph{ordered} set 
		$L=\{p,q_0,q_1,\dots,q_\ell \}\subseteq T$ is a \emph{horizontal L} of $T$ if $L$ is the restriction of $T$ to a rectangle $R$ of the grid, such that $p$ is the top-left corner of $R$, and $q_0,q_1,\dots , q_\ell$ appear in this order on the bottom side of $R$ with $q_0$ being its left-bottom corner and  $q_\ell$ its right-bottom corner. 
		Note that no other elements of $T$ belong to $R$. We say that the length of $L$ is equal to $\ell$. 
		We denote by $T+L$ the $(\delta,\nu)$-tree obtained from $T$ by rotating the nodes $q_0,q_1,\dots ,q_{\ell-1}$ in $T$ in this order. An example of these concepts is illustrated in~\Cref{fig_horizontal_L}. 
		
		\begin{figure}[htb]
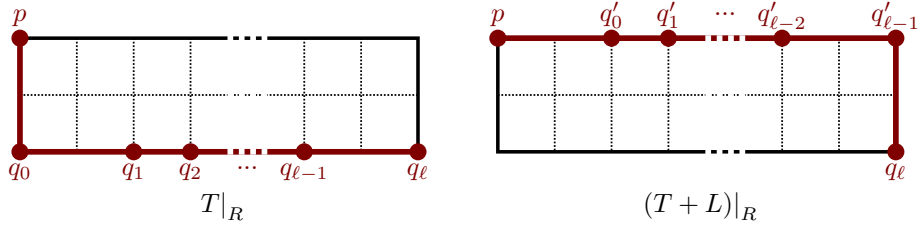

			\centering
			\begin{center}
				\centering
				\begin{minipage}[c]{.4\linewidth}
					\centering
					\def\svgwidth{.9\linewidth}
					\import{Figures/}{horizontal-L-bot.pdf_tex}
					
					$\left. T \right|_R$
				\end{minipage}
				\begin{minipage}[c]{.4\linewidth}
					\centering
					\def\svgwidth{.9\linewidth}
					\import{Figures/}{horizontal-L-top.pdf_tex}
					
					$\left. (T+L) \right|_R$
				\end{minipage}
			\end{center}
			\caption{Schematic illustration of a horizontal $L$ and the tree $T+L$.}
			\label{fig_horizontal_L}
		\end{figure}
		
		\begin{lemma}\label{lem_horizontal_L}
			Let $L$ be a horizontal L of length $\ell$ of a $(\delta,\nu)$-tree $T$. Then, $[T,T+L]$ is a left interval of length $\ell$ in~$\altTamTrees{\nu}{\delta}$. 
			Moreover, every left interval of $\altTamTrees{\nu}{\delta}$ with bottom element $T$ is of this form.  
		\end{lemma}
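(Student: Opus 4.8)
The plan is to analyze both directions by tracking the positions of the nodes $q_0,\dots,q_\ell$ as the rotations of a left interval are performed one at a time inside the rectangle $R$ spanned by the horizontal $L$. Write $q_i=(x_i,y_b)$ for the bottom nodes (so $x_0<x_1<\dots<x_\ell$) and $p=(x_0,y_t)$ for the top-left corner, with $y_b<y_t$; thus $R=p\square q_\ell$. Throughout I use two facts established earlier: a left interval of $\TamTrees{\check\nu}$ is automatically a linear interval whose length equals the number of rotations (\Cref{prop_left_right_vTam}), and $\altTamTrees{\nu}{\delta}$ is the interval $[T_\nu,T_{1^\nu}]$ of $\TamTrees{\check\nu}$ (\Cref{prop_delta_poset_nuprime} together with \Cref{prop_delta_nu_flushing}), where $T_{1^\nu}$ is the top tree. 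In particular this interval is closed under upward $\check\nu$-rotations, since any up-rotation of a tree that is $\ge T_\nu$ yields a tree still $\ge T_\nu$ and automatically $\le T_{1^\nu}$.

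For the forward direction, suppose $L=\{p,q_0,\dots,q_\ell\}$ is a horizontal $L$, so $T\cap R=L$. I would perform the rotations at $q_0,q_1,\dots,q_{\ell-1}$ in this order and check inductively that each is a valid $\check\nu$-rotation. The rotation of $q_i$ uses as upper-left partner the node directly above it in column $x_i$ --- namely $p$ when $i=0$, and otherwise the image $q_{i-1}'=(x_i,y_t)$ of the previously rotated node --- and as lower-right partner the node $q_{i+1}$; its defining rectangle has $(x_i,y_t)$ and $q_{i+1}$ as opposite corners and lies inside $R$, so the emptiness of $R$ guarantees it contains exactly the three required nodes. The rotation sends $q_i$ to the empty corner $(x_{i+1},y_t)$, which is precisely the configuration needed for the next step. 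The resulting chain $T=T_0\lessdot\cdots\lessdot T_\ell=T+L$ rotates the first $\ell$ nodes of the consecutive row sequence $q_0,\dots,q_\ell$ from left to right, i.e.\ it is a left interval; every intermediate $T_i$ is a $(\delta,\nu)$-tree because the interval is closed under up-rotations. By \Cref{prop_left_right_vTam} it has length $\ell$.

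For the backward direction, let $[T,T']$ be a left interval of $\altTamTrees{\nu}{\delta}$, realized by rotating the first $\ell$ nodes of a consecutive row sequence $q_0,\dots,q_\ell$ at height $y_b$. Let $p$ be the upper-left partner used in the first rotation (the node of $T$ in column $x_0$ above $q_0$), put $R=p\square q_\ell$, and set $L=T\cap R$. Since $q_0,\dots,q_\ell$ are consecutive in their row, the lower-right partner of the $i$-th rotation is forced to be $q_{i+1}$, and the whole chain then matches the forward construction, so that $T+L=T'$ once I know $L=\{p,q_0,\dots,q_\ell\}$. The substantive point is exactly this identity $T\cap R=\{p,q_0,\dots,q_\ell\}$. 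To prove it I would argue strip by strip: the validity of the $i$-th rotation says that, in the intermediate tree $T_i$, the rectangle spanned by $(x_i,y_t)$ and $q_{i+1}$ contains only those two nodes together with $q_i$; since $T_{i+1}=(T_i\setminus\{q_i\})\cup\{(x_{i+1},y_t)\}$ and the nodes $q_{i+1},\dots,q_\ell$ are untouched until their turn, this translates into the statement that in the fixed tree $T$ the vertical strip $x_i\le x\le x_{i+1}$, $y_b\le y\le y_t$ meets $T$ only in $q_i$ and $q_{i+1}$. Taking the union of these strips over $0\le i\le\ell-1$ shows that $R$ meets $T$ only in $p$ and $q_0,\dots,q_\ell$, as desired.

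The main obstacle is precisely the rectangle-emptiness $T\cap R=L$ in the backward direction: the cleanness hypotheses supplied by the rotation definition refer to the successively modified trees $T_i$, whereas the horizontal-$L$ condition is a statement about the single fixed tree $T$. Converting one into the other hinges on the bookkeeping observation that each rotation only removes $q_i$ from its bottom position and deposits the new node at the hitherto-empty top corner $(x_{i+1},y_t)$, so that no node outside the sequence is ever created or destroyed inside $R$; this is what lets the per-step cleanness in $T_i$ be pulled back to emptiness of the corresponding strip in $T$. Once this is in place, the matching of lengths and the fact that all intermediate trees remain $(\delta,\nu)$-trees are immediate from \Cref{prop_left_right_vTam} and the closure of the interval under up-rotations.
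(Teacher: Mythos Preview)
Your proof is correct and essentially follows the same approach as the paper, namely matching the definition of a horizontal $L$ against the definition of a left interval. The paper's own proof is the single line ``This follows by the definition of left intervals,'' treating the correspondence as immediate; you have simply supplied the verification that the paper leaves implicit, including the careful strip-by-strip argument that the rectangle-emptiness condition in the horizontal-$L$ definition is exactly what the successive rotation conditions in the left-interval definition amount to.
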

		
		\begin{proof}
			This follows by the definition of left intervals.
		\end{proof}
		
		\begin{proposition}\label{prop_left_interval_counting}
			Let $T$ be a $(\delta,\nu)$-tree with row vector $r(T)=(r_0,\dots,r_n)$.
			The number of left intervals of length $\ell$
			% in the rotation lattice of $(\delta,\nu)$-trees 
			with bottom element~$T$ in $\altTamTrees{\nu}{\delta}$ is equal to  
			\[
			|\{0\leq i \leq n-1:\ r_i\geq \ell\}|.
			\]
		\end{proposition}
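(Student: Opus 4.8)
The plan is to count, via \Cref{lem_horizontal_L}, the horizontal $L$'s of length $\ell$ in $T$, since these are exactly the left intervals of length $\ell$ with bottom element $T$. I will show that there is precisely one such horizontal $L$ for each height $i$ with $0\le i\le n-1$ and $r_i\ge \ell$, and none otherwise; this immediately yields the stated count.

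First I would pin down the bottom-left node of the $L$. Recall from the description of the right flushing bijection in \Cref{sec_nu_trees} that the forbidden positions are exactly those lying strictly above a node that is not the leftmost node of its row; consequently, in any $(\delta,\nu)$-tree no node sits strictly above a non-leftmost node of a row. In a horizontal $L=\{p,q_0,\dots,q_\ell\}$ the node $p$ lies strictly above $q_0$ in the same column, so this observation forces $q_0$ to be the leftmost node of its row, say the row at height $i$. The remaining nodes $q_1,\dots,q_\ell$ are then the next $\ell$ nodes of that row read from left to right, so they exist precisely when the row contains at least $\ell+1$ nodes, i.e.\ when $r_i\ge\ell$ (the row at height $i$ has $r_i+1$ nodes by definition of the row vector). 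This already shows that, once the height $i$ is fixed, the bottom side of the enclosing rectangle is uniquely determined, and it rules out any horizontal $L$ with $r_i<\ell$.

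It then remains to produce the top node $p$ and to check that the rectangle $R$ is clean. For $i\le n-1$ the leftmost node $q_0$ of row $i$ is not the root, and I will use the binary-tree structure of the $\check\nu$-trees rooted at the top-left corner, in which the parent of a non-root node is its nearest left-neighbour in the same row or, when that node is leftmost in its row, its nearest neighbour above in the same column; I take $p$ to be this parent, so $p$ lies directly above $q_0$. To see that $T|_R=\{p,q_0,\dots,q_\ell\}$, note that $F_{\check\nu}$ is the region weakly above the monotone path $\check\nu$, hence closed under moving up and to the left; since the opposite corners $p$ and $q_\ell$ of $R$ lie in $F_{\check\nu}$, the whole rectangle $R$ is contained in $F_{\check\nu}$. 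Now any node of $T$ lying strictly inside $R$, on its top side, or on its right side (other than the listed nodes) would be strictly northeast of $q_0$ with $q_0\square x\subseteq R\subseteq F_{\check\nu}$, contradicting the $\check\nu$-compatibility of $T$; the left side contributes nothing because $p$ is the nearest node above $q_0$, and the bottom side contributes exactly $q_0,\dots,q_\ell$ because they are consecutive. Hence $R$ is clean, and $[T,T+L]$ is a genuine left interval of length $\ell$.

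Putting these together, for each $i$ with $0\le i\le n-1$ there is exactly one horizontal $L$ of length $\ell$ with bottom row $i$ when $r_i\ge\ell$ and none otherwise, while the top row $i=n$ never hosts one since its leftmost node is the root and has nothing above it. Summing over $i$ gives the count $|\{0\le i\le n-1:\ r_i\ge \ell\}|$. I expect the main obstacle to be the existence of the top node $p$: one must argue that the leftmost node of every non-top row really does have a node above it (its vertical parent), after which the staircase-convexity of $F_{\check\nu}$ together with $\check\nu$-compatibility is exactly what guarantees that the surrounding rectangle is clean.
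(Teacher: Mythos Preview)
Your proposal is correct and follows exactly the paper's approach: reduce via \Cref{lem_horizontal_L} to counting horizontal $L$'s, then observe that for each height $0\le i\le n-1$ with $r_i\ge\ell$ there is a unique such $L$, whose bottom nodes are the $\ell+1$ leftmost nodes of row $i$ and whose top node $p$ is the parent of $q_0$. The paper's proof states this in two sentences without justification; you have simply supplied the details (why $q_0$ must be leftmost, why $p$ exists, and why the rectangle is clean), all of which are sound.
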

		\begin{proof}
			By~\Cref{lem_horizontal_L}, the left intervals of length $\ell$ with bottom element $T$ are of the form $[T,T+L]$ where $L$ is a horizontal L of length $\ell$ of $T$.
			There is one such $L$ for each $r_i\geq \ell$ with $0\leq i\leq n-1$, where $q_0,\dots,q_\ell$ are the $\ell+1$ left most nodes of $T$ at height $i$ and $p$ is the parent of $q_0$ in $T$.  
		\end{proof}

		% The \emph{horizontal $L$-partition} a $(\delta,\nu)$-tree $T$, also abbreviated as \emph{$hL$-partition}, is defined as the partition~$\hlambda(T)=(\lambda_0,\dots,\lambda_{n-1})$ obtained from the vector $(r_0,\dots,r_{n-1})$ by reordering its entries from biggest to smallest. Note that we skip the last entry $r_n$ of the row vector, and subtract $1$ to each of the other entries. 
		
		% \begin{proposition}
			%     Let $T$ be a $(\delta,\nu)$-tree and~$\hlambda(T)=(\lambda_0,\dots,\lambda_{n-1})$ be its corresponding $hL$-partition. 
			%     The number of left intervals of length $\ell$
			%     % in the rotation lattice of $(\delta,\nu)$-trees 
			%     with bottom element $T$ is equal to  
			%     \[
			%     |\{i:\ \lambda_i\geq \ell\}|.
			%     \]
			% \end{proposition}
		
		The previous two results,~\Cref{lem_horizontal_L} and~\Cref{prop_left_interval_counting}, characterize the left intervals in~$\altTamTrees{\nu}{\delta}$ with respect to the row vectors of $(\delta,\nu)$-trees. 
		Our next goal is to have a similar characterization for the right intervals with respect to certain column vectors. As we will see, column vectors are not enough for such a characterization, and we will need to consider a notion of reduced column vectors. Before going into that, we first introduce column vectors and present some of their properties.

		\subsection{Column vectors}
		Given a path $\nu$ from~$(0,0)$ to~$(m,n)$, the \emph{reversed path} $\nurev$ is the path from~$(0,0)$ to $(n,m)$ obtained by reading $\nu$ from right to left and replacing east steps by north steps and vice versa. Equivalently, $\nurev= (\nurev_0,\dots, \nurev_m)$ where~$\nurev_i$ is the number of north steps of the path $\nu$ in column $m-i$.
		For instance, if $\nu=ENEEN=(1,2,0)$ then $\nurev=ENNEN=(1,0,1,0)$.
		This notion is convenient to characterize column vectors. 
		
		In order to define the column vector of a $(\delta,\nu)$-tree, it is convenient to assign an order $j_0 \prec_\delta \dots \prec_\delta j_m$ to the columns of $L_{\delta,\nu}$, obtained by reading the columns from shortest to longest, from right to left, as illustrated in~\Cref{fig_columns_and_reduced_columns} (left). 
		See also the three examples in~\Cref{fig_column_order}. 
		
		\begin{figure}[htb]
			\centering
			\begin{center}
				\centering
				\def\svgwidth{.25\linewidth}
				\import{Figures/}{columns-delta.pdf_tex}
				\qquad\qquad
				\def\svgwidth{.25\linewidth}
				%% Creator: Inkscape 1.2.2 (b0a8486541, 2022-12-01), www.inkscape.org
%% PDF/EPS/PS + LaTeX output extension by Johan Engelen, 2010
%% Accompanies image file 'columns-delta-reduced.pdf' (pdf, eps, ps)
%%
%% To include the image in your LaTeX document, write
%%   \input{<filename>.pdf_tex}
%%  instead of
%%   \includegraphics{<filename>.pdf}
%% To scale the image, write
%%   \def\svgwidth{<desired width>}
%%   \input{<filename>.pdf_tex}
%%  instead of
%%   \includegraphics[width=<desired width>]{<filename>.pdf}
%%
%% Images with a different path to the parent latex file can
%% be accessed with the `import' package (which may need to be
%% installed) using
%%   \usepackage{import}
%% in the preamble, and then including the image with
%%   \import{<path to file>}{<filename>.pdf_tex}
%% Alternatively, one can specify
%%   \graphicspath{{<path to file>/}}
%% 
%% For more information, please see info/svg-inkscape on CTAN:
%%   http://tug.ctan.org/tex-archive/info/svg-inkscape
%%
\begingroup%
  \makeatletter%
  \providecommand\color[2][]{%
    \errmessage{(Inkscape) Color is used for the text in Inkscape, but the package 'color.sty' is not loaded}%
    \renewcommand\color[2][]{}%
  }%
  \providecommand\transparent[1]{%
    \errmessage{(Inkscape) Transparency is used (non-zero) for the text in Inkscape, but the package 'transparent.sty' is not loaded}%
    \renewcommand\transparent[1]{}%
  }%
  \providecommand\rotatebox[2]{#2}%
  \newcommand*\fsize{\dimexpr\f@size pt\relax}%
  \newcommand*\lineheight[1]{\fontsize{\fsize}{#1\fsize}\selectfont}%
  \ifx\svgwidth\undefined%
    \setlength{\unitlength}{24.75894477bp}%
    \ifx\svgscale\undefined%
      \relax%
    \else%
      \setlength{\unitlength}{\unitlength * \real{\svgscale}}%
    \fi%
  \else%
    \setlength{\unitlength}{\svgwidth}%
  \fi%
  \global\let\svgwidth\undefined%
  \global\let\svgscale\undefined%
  \makeatother%
  \begin{picture}(1,1.0150491)%
    \lineheight{1}%
    \setlength\tabcolsep{0pt}%
    \put(0,0){\includegraphics[width=\unitlength,page=1]{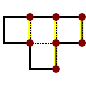}}%
    \put(0.95456104,0.90876204){\color[rgb]{0.50196078,0,0}\makebox(0,0)[t]{\lineheight{1.25}\smash{\begin{tabular}[t]{c}$\bar j_0$\end{tabular}}}}%
    \put(0.34871793,0.90876204){\color[rgb]{0.50196078,0,0}\makebox(0,0)[t]{\lineheight{1.25}\smash{\begin{tabular}[t]{c}$\bar j_1$\end{tabular}}}}%
    \put(0.65164079,0.90876204){\color[rgb]{0.50196078,0,0}\makebox(0,0)[t]{\lineheight{1.25}\smash{\begin{tabular}[t]{c}$\bar j_2$\end{tabular}}}}%
    \put(0.95456104,-0.00000044){\color[rgb]{0,0,0}\makebox(0,0)[t]{\lineheight{1.25}\smash{\begin{tabular}[t]{c}1\end{tabular}}}}%
    \put(0.65163993,-0.00000044){\color[rgb]{0,0,0}\makebox(0,0)[t]{\lineheight{1.25}\smash{\begin{tabular}[t]{c}2\end{tabular}}}}%
    \put(0.34871968,-0.00000044){\color[rgb]{0,0,0}\makebox(0,0)[t]{\lineheight{1.25}\smash{\begin{tabular}[t]{c}1\end{tabular}}}}%
    \put(0,0){\includegraphics[width=\unitlength,page=2]{columns-delta-reduced.pdf}}%
  \end{picture}%
\endgroup%

			\end{center}
			\caption{
				Left: the columns $j_0,j_1,j_2,j_3$ of $L_{\delta_\nu}$, their lengths are 1,1,2,2.
				Right: the reduced columns $\bar j_0, \bar j_1, \bar j_2$  of $L_{\delta_\nu}$ with lengths 1,1,2, where the relevant points are filled brown and the non-relevant points are unfilled green. 
				In both cases, the columns are read from shortest to longest, from rigth to left.
			}
			\label{fig_columns_and_reduced_columns}
		\end{figure}
		
		The \emph{column vector} of a $(\delta,\nu)$-tree $T$ is the vector 
		$$c_\delta(T)=(c_0,\dots,c_m),$$ 
		where $c_i+1$ is the number of nodes of $T$ in column $j_i$.  
		For instance, the three $(\delta,\nu)$-trees (for the three choices of $\delta$)  in~\Cref{fig_column_order}, all have column vector $(0,1,0,1)$. This means, in each of the cases, there are $0+1$ nodes of the tree in column $j_0$, $1+1$ nodes in column $j_1$, $0+1$ nodes in column $j_2$, and~$1+1$ nodes in column $j_3$.
		Equivalently, $c_i$ counts the number of edges of $T$ in column $j_i$. 
		
		\begin{figure}[htb]
			\centering
			\begin{center}
				\centering
				\def\svgwidth{.25\linewidth}
				%% Creator: Inkscape 1.2.2 (b0a8486541, 2022-12-01), www.inkscape.org
%% PDF/EPS/PS + LaTeX output extension by Johan Engelen, 2010
%% Accompanies image file 'tree-delta0.pdf' (pdf, eps, ps)
%%
%% To include the image in your LaTeX document, write
%%   \input{<filename>.pdf_tex}
%%  instead of
%%   \includegraphics{<filename>.pdf}
%% To scale the image, write
%%   \def\svgwidth{<desired width>}
%%   \input{<filename>.pdf_tex}
%%  instead of
%%   \includegraphics[width=<desired width>]{<filename>.pdf}
%%
%% Images with a different path to the parent latex file can
%% be accessed with the `import' package (which may need to be
%% installed) using
%%   \usepackage{import}
%% in the preamble, and then including the image with
%%   \import{<path to file>}{<filename>.pdf_tex}
%% Alternatively, one can specify
%%   \graphicspath{{<path to file>/}}
%% 
%% For more information, please see info/svg-inkscape on CTAN:
%%   http://tug.ctan.org/tex-archive/info/svg-inkscape
%%
\begingroup%
  \makeatletter%
  \providecommand\color[2][]{%
    \errmessage{(Inkscape) Color is used for the text in Inkscape, but the package 'color.sty' is not loaded}%
    \renewcommand\color[2][]{}%
  }%
  \providecommand\transparent[1]{%
    \errmessage{(Inkscape) Transparency is used (non-zero) for the text in Inkscape, but the package 'transparent.sty' is not loaded}%
    \renewcommand\transparent[1]{}%
  }%
  \providecommand\rotatebox[2]{#2}%
  \newcommand*\fsize{\dimexpr\f@size pt\relax}%
  \newcommand*\lineheight[1]{\fontsize{\fsize}{#1\fsize}\selectfont}%
  \ifx\svgwidth\undefined%
    \setlength{\unitlength}{24.75002109bp}%
    \ifx\svgscale\undefined%
      \relax%
    \else%
      \setlength{\unitlength}{\unitlength * \real{\svgscale}}%
    \fi%
  \else%
    \setlength{\unitlength}{\svgwidth}%
  \fi%
  \global\let\svgwidth\undefined%
  \global\let\svgscale\undefined%
  \makeatother%
  \begin{picture}(1,0.83175681)%
    \lineheight{1}%
    \setlength\tabcolsep{0pt}%
    \put(0,0){\includegraphics[width=\unitlength,page=1]{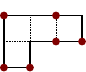}}%
    \put(0.95454467,0.7424244){\color[rgb]{0.50196078,0,0}\makebox(0,0)[t]{\lineheight{1.25}\smash{\begin{tabular}[t]{c}$j_0$\end{tabular}}}}%
    \put(0.34848313,0.7424244){\color[rgb]{0.50196078,0,0}\makebox(0,0)[t]{\lineheight{1.25}\smash{\begin{tabular}[t]{c}$j_2$\end{tabular}}}}%
    \put(0.04545454,0.7424244){\color[rgb]{0.50196078,0,0}\makebox(0,0)[t]{\lineheight{1.25}\smash{\begin{tabular}[t]{c}$j_3$\end{tabular}}}}%
    \put(0.65151521,0.7424244){\color[rgb]{0.50196078,0,0}\makebox(0,0)[t]{\lineheight{1.25}\smash{\begin{tabular}[t]{c}$j_1$\end{tabular}}}}%
    \put(0,0){\includegraphics[width=\unitlength,page=2]{tree-delta0.pdf}}%
  \end{picture}%
\endgroup%

				\qquad
				\def\svgwidth{.25\linewidth}
				%% Creator: Inkscape 1.2.2 (b0a8486541, 2022-12-01), www.inkscape.org
%% PDF/EPS/PS + LaTeX output extension by Johan Engelen, 2010
%% Accompanies image file 'tree-delta1.pdf' (pdf, eps, ps)
%%
%% To include the image in your LaTeX document, write
%%   \input{<filename>.pdf_tex}
%%  instead of
%%   \includegraphics{<filename>.pdf}
%% To scale the image, write
%%   \def\svgwidth{<desired width>}
%%   \input{<filename>.pdf_tex}
%%  instead of
%%   \includegraphics[width=<desired width>]{<filename>.pdf}
%%
%% Images with a different path to the parent latex file can
%% be accessed with the `import' package (which may need to be
%% installed) using
%%   \usepackage{import}
%% in the preamble, and then including the image with
%%   \import{<path to file>}{<filename>.pdf_tex}
%% Alternatively, one can specify
%%   \graphicspath{{<path to file>/}}
%% 
%% For more information, please see info/svg-inkscape on CTAN:
%%   http://tug.ctan.org/tex-archive/info/svg-inkscape
%%
\begingroup%
  \makeatletter%
  \providecommand\color[2][]{%
    \errmessage{(Inkscape) Color is used for the text in Inkscape, but the package 'color.sty' is not loaded}%
    \renewcommand\color[2][]{}%
  }%
  \providecommand\transparent[1]{%
    \errmessage{(Inkscape) Transparency is used (non-zero) for the text in Inkscape, but the package 'transparent.sty' is not loaded}%
    \renewcommand\transparent[1]{}%
  }%
  \providecommand\rotatebox[2]{#2}%
  \newcommand*\fsize{\dimexpr\f@size pt\relax}%
  \newcommand*\lineheight[1]{\fontsize{\fsize}{#1\fsize}\selectfont}%
  \ifx\svgwidth\undefined%
    \setlength{\unitlength}{24.75002109bp}%
    \ifx\svgscale\undefined%
      \relax%
    \else%
      \setlength{\unitlength}{\unitlength * \real{\svgscale}}%
    \fi%
  \else%
    \setlength{\unitlength}{\svgwidth}%
  \fi%
  \global\let\svgwidth\undefined%
  \global\let\svgscale\undefined%
  \makeatother%
  \begin{picture}(1,0.83175681)%
    \lineheight{1}%
    \setlength\tabcolsep{0pt}%
    \put(0,0){\includegraphics[width=\unitlength,page=1]{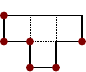}}%
    \put(0.95454467,0.7424244){\color[rgb]{0.50196078,0,0}\makebox(0,0)[t]{\lineheight{1.25}\smash{\begin{tabular}[t]{c}$j_0$\end{tabular}}}}%
    \put(0.34848313,0.7424244){\color[rgb]{0.50196078,0,0}\makebox(0,0)[t]{\lineheight{1.25}\smash{\begin{tabular}[t]{c}$j_3$\end{tabular}}}}%
    \put(0.04545454,0.7424244){\color[rgb]{0.50196078,0,0}\makebox(0,0)[t]{\lineheight{1.25}\smash{\begin{tabular}[t]{c}$j_1$\end{tabular}}}}%
    \put(0.65151521,0.7424244){\color[rgb]{0.50196078,0,0}\makebox(0,0)[t]{\lineheight{1.25}\smash{\begin{tabular}[t]{c}$j_2$\end{tabular}}}}%
    \put(0,0){\includegraphics[width=\unitlength,page=2]{tree-delta1.pdf}}%
  \end{picture}%
\endgroup%

				\qquad
				\def\svgwidth{.25\linewidth}
				%% Creator: Inkscape 1.2.2 (b0a8486541, 2022-12-01), www.inkscape.org
%% PDF/EPS/PS + LaTeX output extension by Johan Engelen, 2010
%% Accompanies image file 'tree-delta2.pdf' (pdf, eps, ps)
%%
%% To include the image in your LaTeX document, write
%%   \input{<filename>.pdf_tex}
%%  instead of
%%   \includegraphics{<filename>.pdf}
%% To scale the image, write
%%   \def\svgwidth{<desired width>}
%%   \input{<filename>.pdf_tex}
%%  instead of
%%   \includegraphics[width=<desired width>]{<filename>.pdf}
%%
%% Images with a different path to the parent latex file can
%% be accessed with the `import' package (which may need to be
%% installed) using
%%   \usepackage{import}
%% in the preamble, and then including the image with
%%   \import{<path to file>}{<filename>.pdf_tex}
%% Alternatively, one can specify
%%   \graphicspath{{<path to file>/}}
%% 
%% For more information, please see info/svg-inkscape on CTAN:
%%   http://tug.ctan.org/tex-archive/info/svg-inkscape
%%
\begingroup%
  \makeatletter%
  \providecommand\color[2][]{%
    \errmessage{(Inkscape) Color is used for the text in Inkscape, but the package 'color.sty' is not loaded}%
    \renewcommand\color[2][]{}%
  }%
  \providecommand\transparent[1]{%
    \errmessage{(Inkscape) Transparency is used (non-zero) for the text in Inkscape, but the package 'transparent.sty' is not loaded}%
    \renewcommand\transparent[1]{}%
  }%
  \providecommand\rotatebox[2]{#2}%
  \newcommand*\fsize{\dimexpr\f@size pt\relax}%
  \newcommand*\lineheight[1]{\fontsize{\fsize}{#1\fsize}\selectfont}%
  \ifx\svgwidth\undefined%
    \setlength{\unitlength}{24.75002109bp}%
    \ifx\svgscale\undefined%
      \relax%
    \else%
      \setlength{\unitlength}{\unitlength * \real{\svgscale}}%
    \fi%
  \else%
    \setlength{\unitlength}{\svgwidth}%
  \fi%
  \global\let\svgwidth\undefined%
  \global\let\svgscale\undefined%
  \makeatother%
  \begin{picture}(1,0.83175681)%
    \lineheight{1}%
    \setlength\tabcolsep{0pt}%
    \put(0,0){\includegraphics[width=\unitlength,page=1]{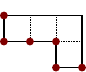}}%
    \put(0.95454467,0.7424244){\color[rgb]{0.50196078,0,0}\makebox(0,0)[t]{\lineheight{1.25}\smash{\begin{tabular}[t]{c}$j_2$\end{tabular}}}}%
    \put(0.34848313,0.7424244){\color[rgb]{0.50196078,0,0}\makebox(0,0)[t]{\lineheight{1.25}\smash{\begin{tabular}[t]{c}$j_0$\end{tabular}}}}%
    \put(0.04545454,0.7424244){\color[rgb]{0.50196078,0,0}\makebox(0,0)[t]{\lineheight{1.25}\smash{\begin{tabular}[t]{c}$j_1$\end{tabular}}}}%
    \put(0.65151521,0.7424244){\color[rgb]{0.50196078,0,0}\makebox(0,0)[t]{\lineheight{1.25}\smash{\begin{tabular}[t]{c}$j_3$\end{tabular}}}}%
    \put(0,0){\includegraphics[width=\unitlength,page=2]{tree-delta2.pdf}}%
  \end{picture}%
\endgroup%

			\end{center}
			\caption{The columns $j_0,j_1,j_2,j_3$ of $L_{\delta,\nu}$ for $\nu=ENEEN$ and the three possible choices of $\delta=(2,0),(1,0)$ and $(0,0)$. The columns are read from shortest to longest, from right to left.
				The column vector of the shown trees is $c_\delta(T)=(0,1,0,1)$ in all three cases.}
			\label{fig_column_order}
		\end{figure}

		\begin{proposition}\label{prop_column_vectors_characterization}
			A $(\delta,\nu)$-tree $T$ is completely characterized by its column vector. Moreover, $(c_0,\dots,c_m)$ is the column vector of some $(\delta,\nu)$-tree if and only if
			
			\begin{enumerate}
				\item $c_i\geq 0$ for all $i$, 
				\item $\sum_{i=0}^j c_i \leq \sum_{i=0}^j \nurev_i 
				$ for all $j$, and
				\item $\sum_{i=0}^m c_i =  \sum_{i=0}^m \nurev_i$.
			\end{enumerate}
		\end{proposition}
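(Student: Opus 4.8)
The plan is to prove this exactly as its row counterpart \Cref{prop_row_vectors_characterization}, but with the roles of rows and columns interchanged. The reversed path $\nurev$ enters because reflecting the whole picture across the anti-diagonal of the bounding box swaps columns with rows and carries $\nu$ to $\nurev$; this reflection is a useful guide, but I would \emph{not} phrase the argument through it, since it does not send $(\delta,\nu)$-trees to a family of the form $(\bar\delta,\nurev)$-trees. Indeed, on $\nu=ENEEN$ with $\delta=(1,0)$ the reflected shape is $F_{\reverse{\check\nu}}$ with a corner removed, and the induced increment vector violates $\bar\delta_i\le\nurev_i$. So I would instead argue directly with the columns of $L_{\delta,\nu}$, read in the order $\prec_\delta$.

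The first ingredient I would establish is purely about the shape: reading the columns of $L_{\delta,\nu}$ from shortest to longest and, within one length, from right to left, the length of the $j$-th column equals $\nurev_0+\dots+\nurev_j$. Equivalently, the multiset of column lengths of $L_{\delta,\nu}$ depends only on $\nurev$, not on $\delta$. This is the column analogue of the observation, used in \Cref{prop_delta_nu_flushing}, that the restriction of $F_{\delta,\nu}$ to heights at most $k$ has width $\nu_0+\dots+\nu_k$; here it records instead how many north steps of $\nu$ sit in each column, which is exactly what $\nurev$ encodes.

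With this in hand, the forward direction becomes the counting bound: in any $(\delta,\nu)$-tree $T$, the number of nodes contained in the $j+1$ shortest columns is at most $(\nurev_0+\dots+\nurev_j)+(j+1)$. This mirrors the bound ``$T$ has at most $(\nu_0+\dots+\nu_k)+(k+1)$ nodes at heights $\le k$'' from \Cref{prop_delta_nu_flushing}, and combined with the length identity it gives the inequality $\sum_{i=0}^{j}c_i\le\sum_{i=0}^{j}\nurev_i$. The remaining two conditions are immediate: every column of $L_{\delta,\nu}$ meets $T$ (giving $c_i\ge0$), and $T$ has $m+n+1$ nodes distributed among the $m+1$ columns, so $\sum_i c_i=n=\sum_i\nurev_i$. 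For the converse, and for the assertion that $T$ is determined by its column vector, I would reconstruct $T$ from a prescribed admissible vector by filling the columns in the order $\prec_\delta$, placing in each the forced number of nodes at the lowest available positions, precisely mirroring the row-by-row reconstruction of \Cref{lem_check_nu_tree}; well-definedness gives surjectivity and uniqueness gives injectivity, hence $T$ is characterized by $c_\delta(T)$.

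The main obstacle will be the counting bound on the shortest columns, that is, showing that a maximal $\check\nu$-compatible subset of the union of the $j+1$ shortest columns has exactly $(\nurev_0+\dots+\nurev_j)+(j+1)$ elements. In the row case the region ``heights $\le k$'' is a down-closed staircase to which the standard semiperimeter count applies directly; here one must first observe that sorting the columns by length straightens their union into a Ferrers-type subshape, and that the tie-breaking ``right to left'' in $\prec_\delta$ is exactly what makes this sorting respect $\check\nu$-compatibility. The subtlety is genuine: for $\nu=ENEEN$, $\delta=(1,0)$, the tree $\{(0,2),(1,2),(2,0),(2,1),(2,2),(3,1)\}$ reads as $(0,2,0,0)$ in the naive right-to-left column order, violating $\sum_{i=0}^{1}c_i\le\sum_{i=0}^{1}\nurev_i$, whereas in the order $\prec_\delta$ it reads as the admissible vector $(0,0,2,0)$. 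Getting this ordering argument right is the crux; once it is settled, the rest follows by transporting the argument of \Cref{prop_row_vectors_characterization}.
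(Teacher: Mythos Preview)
Your reconstruction argument---filling the columns of $L_{\delta,\nu}$ in the order $\prec_\delta$, bottom up, avoiding forbidden positions---is exactly the paper's \emph{down flushing algorithm} (\Cref{lem_down_flushing}), so the claim that $T$ is determined by $c_\delta(T)$ is handled the same way in both.

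Where you diverge from the paper is in the characterization of admissible column vectors. You propose to prove the inequalities $\sum_{i\le j}c_i\le\sum_{i\le j}\nurev_i$ directly for arbitrary $\delta$, via a counting bound on the $j+1$ shortest columns; you correctly flag that these columns need not be contiguous and that a ``sorting respects $\check\nu$-compatibility'' lemma is required. The paper does \emph{not} attempt this. Instead, it first proves (\Cref{lem_vertical_small_change,lem_vertical_flushing}) that for any two increment vectors $\delta,\delta'$ there is a bijection between $(\delta,\nu)$-trees and $(\delta',\nu)$-trees preserving the column vector; this is done by an explicit local transformation when $\delta$ and $\delta'$ differ by $\pm1$ in a single coordinate. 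Once the set of column vectors is known to be independent of $\delta$, the paper specializes to $\delta=\delta^{\max}$, where $(\delta,\nu)$-trees are ordinary $\nu$-trees and the anti-diagonal reflection you rejected \emph{does} work cleanly, reducing to \Cref{prop_row_vectors_characterization} for $\nurev$.

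So your plan is a genuinely different route. Its potential payoff is avoiding the somewhat delicate bijection of \Cref{lem_vertical_small_change}; its cost is that the ``sorting'' lemma you isolate is itself nontrivial, and you have not proved it. Note in particular that $\check\nu$-compatibility of two points $(x_1,y_1),(x_2,y_2)$ with $x_1<x_2$ and $y_1<y_2$ depends on whether $(x_2,y_1)\in F_{\check\nu}$, which is governed by the bottom profile of $F_{\check\nu}$, not of $L_{\delta,\nu}$; permuting the columns of $L_{\delta,\nu}$ to sort them by length does not obviously respect this. If you want to push your approach through, this is the statement that needs a careful argument; alternatively, the paper's reduction to $\delta^{\max}$ sidesteps it entirely.
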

		
		We prove this proposition in several steps. 
		
		\begin{lemma}\label{lem_down_flushing}
			A $(\delta,\nu)$-tree $T$ can be reconstructed from its column vector. 
		\end{lemma}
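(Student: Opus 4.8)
The plan is to reconstruct $T$ column by column, mirroring the right flushing reconstruction that establishes \Cref{prop_row_vectors_characterization} for rows. First I would define a \emph{down flushing} procedure: given a sequence $(c_0,\dots,c_m)$, process the columns of $L_{\delta,\nu}$ in the order $j_0\prec_\delta\dots\prec_\delta j_m$ (shortest to longest, from right to left), and in column $j_i$ insert $c_i+1$ nodes, flushed downward, at the lowest lattice points of that column not already forbidden by a previously inserted node. Here a point is declared forbidden exactly when placing a node there would be $\check\nu$-incompatible with an earlier node; this is the anti-diagonal reflection of the rule ``forbid the points above a node that is not leftmost in its row'' used for rows, and under this reflection $\check\nu$-compatibility is preserved while $\nu$ is exchanged with $\nurev$, which is precisely why the relevant column order is $\prec_\delta$.

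The core step is to show that, fed the column vector $c_\delta(T)$ of an actual $(\delta,\nu)$-tree $T$, the down flushing returns $T$. I would prove this by the same forced-position argument as in the proof of \Cref{lem_check_nu_tree}. Reconstruct $T$ by inserting its own nodes in the column order $\prec_\delta$; I claim each node of $T$ in the current column must sit at the lowest position of that column that is not forbidden by the nodes inserted before it. Indeed, if some node of $T$ were strictly above the lowest free position $q$, then $q$ would be $\check\nu$-compatible with every already-placed node (by the choice of $q$) and with every not-yet-placed node of $T$ (otherwise two nodes of $T$ would themselves be incompatible), so $T\cup\{q\}$ would be a strictly larger pairwise $\check\nu$-compatible set inside $L_{\delta,\nu}$, contradicting the maximality of $T$. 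Thus the heights of the nodes of $T$ in each column are completely determined by the numbers $c_0,\dots,c_m$, which both shows that down flushing recovers $T$ and that $c_\delta(T)$ determines $T$.

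The main obstacle I anticipate is the bookkeeping of forbidden positions in the column direction. In the row case the forbidden set has the clean description ``above a non-leftmost node'', and I must check that its reflected counterpart, together with processing the columns in the order $\prec_\delta$, makes the ``lowest free position'' well defined and unambiguous at every step inside the possibly skew shape $F_{\delta,\nu}$. The cleanest way to discharge this would be to make the anti-diagonal reflection fully precise: verify that it carries $L_{\delta,\nu}$ bijectively onto $L_{\bar\delta,\nurev}$ for a suitable increment vector $\bar\delta$ with respect to $\nurev$, so that down flushing for $(\delta,\nu)$ is literally the right flushing for $(\bar\delta,\nurev)$. The reconstruction statement would then follow from \Cref{prop_delta_nu_flushing} and \Cref{prop_row_vectors_characterization} applied to $\nurev$, with no need to redo the compatibility analysis; I would use this reflection as the conceptual backbone and include the forced-position argument above only to pin down the one step that is not purely formal.
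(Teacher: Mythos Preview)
Your forced-position argument breaks because you process the columns in the order $j_0\prec_\delta\dots\prec_\delta j_m$ rather than in the \emph{geometric} right-to-left order the paper uses. When $\delta\neq\delta^{\max}$ the path $\hat\nu$ cuts the left side of $F_{\delta,\nu}$, so short columns of $L_{\delta,\nu}$ can sit to the \emph{left} of long ones; then a not-yet-placed node of $T$ may lie strictly northeast of your candidate point $q$, and your clause ``otherwise two nodes of $T$ would themselves be incompatible'' is false. Concretely, take $\nu=NEN$ and $\delta=(0,0)$. Then $\check\nu=ENN$, $L_{\delta,\nu}=\{(1,0),(1,1),(1,2),(0,1),(0,2)\}$, and the column at $x=0$ (length $2$) precedes the column at $x=1$ (length $3$) in your order. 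For the $(\delta,\nu)$-tree $T=\{(1,0),(1,1),(1,2),(0,2)\}$ with column vector $(0,2)$, the lowest free point in column $x=0$ is $q=(0,1)$, but $q$ is $\check\nu$-incompatible with the not-yet-placed $T$-node $(1,2)$ while every pair of nodes of $T$ is compatible; so $T\cup\{q\}$ is not compatible and no contradiction to maximality arises. Your down-flushing procedure would place a node at $(0,1)$ and then be unable to fit three nodes in column $x=1$: it does not reconstruct $T$.

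The paper's proof runs exactly your maximality argument but inserts nodes column by column from right to left geometrically; the $\prec_\delta$ labelling is used only to index the entries of the column vector, not to order the insertion. With that order, any not-yet-placed node lies weakly to the left of the current column, so if it were $\check\nu$-incompatible with $q$ it would be strictly southwest of $q$, hence strictly southwest of the $T$-node $p_j$ sitting above $q$; since $F_{\check\nu}$ is a genuine Ferrers shape the larger rectangle is still contained in it, giving the needed contradiction. Your reflection fallback does not repair the gap either: for general $\delta$ an anti-diagonal reflection sends the top row of $F_{\delta,\nu}$ to a side column and $F_{\check\nu}$ to a region bounded on the side rather than below, so the image is not of the form $L_{\bar\delta,\nurev}$ and $\check\nu$-compatibility is not carried to $\check{\nurev}$-compatibility. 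The one-line fix is simply to replace $\prec_\delta$ by the geometric right-to-left order in your argument.
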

		
		\begin{proof}
			Let  $ T $ be a $(\delta,\nu)$-tree. Label the elements of the tree $p_0,p_1,...,p_r$ from right to left, from bottom to top, as illustrated in~\Cref{fig_down_flushing}.
			
			We reconstruct $T$ recursively, by adding the elements $p_0,p_1,...,p_r$ one at a time in order. Note that if $p_i$ is not the top most element in its column, then all the lattice points on the left of $p_i$ are forbidden in the next steps, because they are incompatible with an element $p_j\in T$ that is above $p_i$ in the same column. 
			
			Now, when we add an element $p_j$ in the process of reconstructing $T$, then $p_j$ is necessarily located at the bottom most position of its column that is not forbidden by any element before. Otherwise, let $p_j\in T$ be the node with smallest label that does not satisfy that property, and let $q$ be the bottom most lattice point in the same column that is not forbidden by any element $p_i$ with $i<j$. In particular, $q$ is below~$p_j$ by assumption, and $q$ is compatible with every $p_i$ with $i<j$. Moreover, $q$ is also compatible with $p_k\in T$ with $k>j$, otherwise $p_k,p_j$ would be incompatible. So, we can add the element $q$ to $T$, creating a new compatible set, contradicting the maximality of $T$.  
			
			The tree $T$ can therefore be constructed by adding nodes from right to left, from bottom to top, avoiding forbidden positions. The forbidden positions are those to the left of a node that is not the top most node in a column. 
			The number of points in each column is determined by the column vector.
		\end{proof}
		
		\begin{figure}[htb]
			\centering
			\def\svgwidth{0.55\textwidth}
			\import{Figures/}{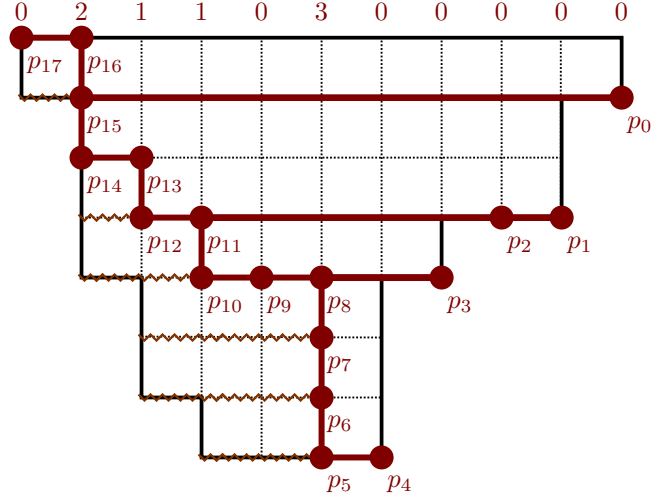}
			\caption{The down flushing algorithm.}
			\label{fig_down_flushing}
		\end{figure}
		
		We call the algorithm described in the previous proof the \emph{down flushing algorithm}. Its input is a valid column vector $(c_0,\dots,c_m)$ (or the number of nodes in each column), and its output is the unique $(\delta,\nu)$-tree such that~$c_\delta(T)=(c_0,\dots,c_m)$. 
		Figure~\ref{fig_down_flushing} illustrate an example, where the labels on top represent the number of nodes, minus 1, in each column, and the forbidden positions are the ones that belong to the wiggly lines.

		\begin{lemma}\label{lem_vertical_small_change}
			Let $\delta,\delta'$ be two increment vectors with respect to $\nu$, such that $\delta'$ is obtained by either adding or subtracting 1 to one of the entries of~$\delta$. 
			For every $(\delta,\nu)$-tree $T$, there is a unique~$(\delta',\nu)$-tree~$T'$ such that $$c_\delta(T)=c_{\delta'}(T').$$   
		\end{lemma}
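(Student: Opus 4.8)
The plan is to run the down flushing algorithm of~\Cref{lem_down_flushing} as a black box and reduce everything to one structural fact about the shapes $L_{\delta,\nu}$ and $L_{\delta',\nu}$. First, uniqueness is immediate: by~\Cref{lem_down_flushing} a $(\delta',\nu)$-tree is reconstructed from its column vector, so at most one $T'$ can satisfy $c_{\delta'}(T')=c_\delta(T)$. Hence the entire content is \emph{existence}, namely that $v:=c_\delta(T)$ occurs as the column vector of some $(\delta',\nu)$-tree. Concretely, I would let $T'$ be the output of the down flushing algorithm fed with $v$ inside the shape $L_{\delta',\nu}$, and the task is to show that this run never gets stuck and returns a genuine $(\delta',\nu)$-tree. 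Since interchanging $\delta$ and $\delta'$ is symmetric, I may assume $\delta'=\delta+e_k$; running the same construction with the roles of $\delta,\delta'$ reversed then produces the inverse map.

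Second, I would pin down exactly how the shape changes. Using the explicit lower boundary $\hat\nu=W^{\nu_0}NW^{\gamma_1}\cdots NW^{\gamma_n}$ with $\gamma_i=\nu_i-\delta_i$ and $\check\nu_0=\nu_0+\sum_i\gamma_i$, passing from $\delta$ to $\delta'=\delta+e_k$ lowers $\gamma_k$ and $\check\nu_0$ each by one, which shifts a few runs of $\hat\nu$ (and correspondingly of the right boundary $\check\nu$) by one unit. Every column of $L_{\delta,\nu}$ is an interval of heights $\{b(x),\dots,n\}$ topping out at $n$, so a column is determined by its length. The key claim~$(\ast)$ is that this local modification of the boundary only \emph{permutes} which columns carry which length: the multiset of column lengths is unchanged, and therefore the sequence of lengths read in the order $\prec_\delta$ (shortest to longest, right to left) is literally the same for $\delta$ and $\delta'$. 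I would verify~$(\ast)$ by a direct bookkeeping of the affected columns from the formulas for $\hat\nu$ and $\check\nu$.

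Third --- and this is the crucial point --- I would show that, granting~$(\ast)$, the down flushing algorithm succeeds on $v$ in $L_{\delta',\nu}$ exactly when it succeeds in $L_{\delta,\nu}$ (which it does, since $T$ exists). The clean way to phrase this is to prove that whether the greedy sweep ever runs out of non-forbidden positions in a column depends only on $v$ together with the $\prec$-ordered column-length sequence --- both $\delta$-independent by~$(\ast)$ --- and not on the physical left-to-right profile of bottoms. I expect this to be the main obstacle, precisely because the algorithm is defined by a physically right-to-left sweep whose forbidden-height dynamics a priori see the physical profile, and the profiles of $L_{\delta,\nu}$ and $L_{\delta',\nu}$ genuinely differ even though their $\prec$-ordered length sequences agree. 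I would isolate this as an auxiliary lemma giving a ballot-type criterion: down flushing succeeds on $(c_0,\dots,c_m)$ iff every partial sum $\sum_{i=0}^{j}c_i$ stays below the bound imposed by the sorted column lengths, a criterion manifestly invariant under~$(\ast)$. Granting it, $T'$ is a well-defined $(\delta',\nu)$-tree with $c_{\delta'}(T')=v=c_\delta(T)$, and the symmetric construction provides the inverse.

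A cleaner but less self-contained alternative would bypass the success analysis using~\Cref{prop_row_vectors_characterization}: reflecting $L_{\delta,\nu}$ across the main diagonal turns $\check\nu$-compatibility into the compatibility for the reversed path and sends columns to rows, so a $(\delta,\nu)$-tree maps to a tree over the base path $\nurev$ whose \emph{row} vector equals $c_\delta(T)$. If one checks that the reflected shape is again of the form $L_{\tilde\delta,\nurev}$ for an increment vector $\tilde\delta$ with respect to $\nurev$, then~\Cref{prop_row_vectors_characterization} (whose conditions depend only on $\nurev$) shows directly that the set of possible column vectors is independent of $\delta$, which subsumes both~$(\ast)$ and the existence claim at once.
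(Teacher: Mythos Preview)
Your plan has a genuine gap at the step you yourself flag as the ``main obstacle.'' The ballot-type criterion you want --- that down flushing succeeds on $(c_0,\dots,c_m)$ iff the partial sums $\sum_{i\le j}c_i$ are bounded by quantities depending only on $\nurev$ --- is precisely the content of \Cref{prop_column_vectors_characterization}. In the paper's logical order that proposition is \emph{derived from} \Cref{lem_vertical_small_change} (via \Cref{lem_vertical_flushing}), so invoking it here is circular. Proving such a criterion independently for general $\delta$ is not routine: the down flushing algorithm processes columns in \emph{physical} right-to-left order, while your partial sums are taken in the $\prec_\delta$ order (shortest to longest, then right to left), and for $\delta\neq\delta^{\max}$ these orders genuinely differ (see for instance the $\delta=(0,0)$ case in \Cref{fig_column_order}). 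You do not explain how to bridge this mismatch, and I do not see an easy way to do so without essentially reproducing the paper's argument.

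Your alternative via reflection also does not go through as stated. Under the anti-diagonal reflection $(x,y)\mapsto(n-y,m-x)$, the full top row of $L_{\delta,\nu}$ becomes the full leftmost column of the image, but the top row of the reflected shape is the image of column $x=0$ of $L_{\delta,\nu}$, which in general contains only the points at heights where $\hat\nu$ has already reached $x=0$ --- typically a short segment near $y=n$, not the full row. Hence the reflected shape is not of the form $L_{\tilde\delta,\nurev}$ (those always have a full top row), so \Cref{prop_row_vectors_characterization} does not apply directly. By contrast, the paper avoids any global criterion: it exhibits $T'$ by an explicit local surgery on $T$ (moving certain nodes along a distinguished path from the affected column toward the root), and checks directly that the result is a $(\delta',\nu)$-tree with the same column vector.
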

		\begin{proof}
			Uniqueness follows by~\Cref{lem_down_flushing}, so we just need to prove existence. 
			
			Let $T$ be a $(\delta,\nu)$-tree with column vector $c_\delta(T)=(c_0,\dots,c_m)$, and assume that $\delta'$ is obtained by subtracting 1 to a non-zero entry $\delta_a$ of~$\delta$. This operation produces a small transformation to the columns of $L_{\delta,\nu}$. All the columns of length larger than $n-a$ are moved one step to the right, while the subsequent column (of length $n-a$) is moved one step to their left. All other columns stay the same. The result is the new set $L_{\delta',\nu}$. An example is illustrated in~\Cref{fig_preserve_column_two}.
			
			Consider the labeling $j_0,\dots, j_m$ of the columns of $L_{\delta,\nu}$ (and also of the columns of $L_{\delta',\nu}$) obtained by reading the columns from shortest to longest, from right to left, as before. Assume that $j_{i_1}$ is the label of the column that was moved to the left under the small transformation that changes $\delta$ to $\delta'$. We also consider the columns $j_{i_2},\dots,j_{i_k}$, consisting of the columns of $L_{\delta,\nu}$, from right to left, of length bigger than $n-a$ that contain at least one node of $T$ at height bigger than or equal to $a$. The restriction of the tree $T$ to the nodes at height bigger than or equal to $a$ in columns $j_{i_1},\dots,j_{i_k}$ is marked as a bold red path on the left of~\Cref{fig_preserve_column_one,fig_preserve_column_two}. 
			It is a subpath of the unique path of the tree from column~$j_{i_1}$ to the root of the tree.  
			We will describe a small transformation to $T$ that produces a $(\delta',\nu)$-tree $T'$ with the same column vector as $T$. The result of this is illustrated on the right of~\Cref{fig_preserve_column_one,fig_preserve_column_two}, and affects the tree at the red marked nodes.
            The brown points in the columns between $\bar j_{i_k}$ and $\bar j_{i_1}$ are also moved one step to the right, together with their column. 
			
			Note that the columns $j_{i_2},\dots,j_{i_k}$ of $L_{\delta',\nu}$ are positioned one step to the right of columns $j_{i_2},\dots,j_{i_k}$ of $L_{\delta,\nu}$, while column $j_1$ was moved to some position to the left, see~\Cref{fig_preserve_column_one,fig_preserve_column_two}.
			
			Let $A$ be the set of rows that contain at least one node of the marked bold red path of $T$. We apply the following transformation to $T$. For each node $T$ in a column $j_{i_b}$, for $2\leq b\leq k$, that belongs to $A$, we draw a node in $T'$ in column $j_{i_b}$ but shifted down $c_{j_{i_1}}$ positions withing $A$. The $c_{j_{i_1}}+1$ nodes in column $j_{i_1}$ are moved to the top rows of $A$. 
			All other nodes of $T$ remain intact in their columns.  
			A schematic illustration of this transformation is shown in~\Cref{fig_preserve_column_one}, and an explicit example in~\Cref{fig_preserve_column_two}.
			
			The result is a $(\delta',\nu)$-tree $T'$ with the same column vector as $T$: $c_\delta(T)=c_{\delta'}(T')$.
			The reason why this procedure works is guarantied by a direct analysis of the down flushing algorithm. Moreover, we can also recover $T$ from $T'$ by a similar transformation in the reverse direction.
		\end{proof}

		\begin{figure}[htb]
			\centering
			\begin{center}
				\centering
				\def\svgwidth{0.45\textwidth}
				%% Creator: Inkscape 1.2.2 (b0a8486541, 2022-12-01), www.inkscape.org
%% PDF/EPS/PS + LaTeX output extension by Johan Engelen, 2010
%% Accompanies image file 'small-transfo-columns-before.pdf' (pdf, eps, ps)
%%
%% To include the image in your LaTeX document, write
%%   \input{<filename>.pdf_tex}
%%  instead of
%%   \includegraphics{<filename>.pdf}
%% To scale the image, write
%%   \def\svgwidth{<desired width>}
%%   \input{<filename>.pdf_tex}
%%  instead of
%%   \includegraphics[width=<desired width>]{<filename>.pdf}
%%
%% Images with a different path to the parent latex file can
%% be accessed with the `import' package (which may need to be
%% installed) using
%%   \usepackage{import}
%% in the preamble, and then including the image with
%%   \import{<path to file>}{<filename>.pdf_tex}
%% Alternatively, one can specify
%%   \graphicspath{{<path to file>/}}
%% 
%% For more information, please see info/svg-inkscape on CTAN:
%%   http://tug.ctan.org/tex-archive/info/svg-inkscape
%%
\begingroup%
  \makeatletter%
  \providecommand\color[2][]{%
    \errmessage{(Inkscape) Color is used for the text in Inkscape, but the package 'color.sty' is not loaded}%
    \renewcommand\color[2][]{}%
  }%
  \providecommand\transparent[1]{%
    \errmessage{(Inkscape) Transparency is used (non-zero) for the text in Inkscape, but the package 'transparent.sty' is not loaded}%
    \renewcommand\transparent[1]{}%
  }%
  \providecommand\rotatebox[2]{#2}%
  \newcommand*\fsize{\dimexpr\f@size pt\relax}%
  \newcommand*\lineheight[1]{\fontsize{\fsize}{#1\fsize}\selectfont}%
  \ifx\svgwidth\undefined%
    \setlength{\unitlength}{127.5000036bp}%
    \ifx\svgscale\undefined%
      \relax%
    \else%
      \setlength{\unitlength}{\unitlength * \real{\svgscale}}%
    \fi%
  \else%
    \setlength{\unitlength}{\svgwidth}%
  \fi%
  \global\let\svgwidth\undefined%
  \global\let\svgscale\undefined%
  \makeatother%
  \begin{picture}(1,0.58762385)%
    \lineheight{1}%
    \setlength\tabcolsep{0pt}%
    \put(0,0){\includegraphics[width=\unitlength,page=1]{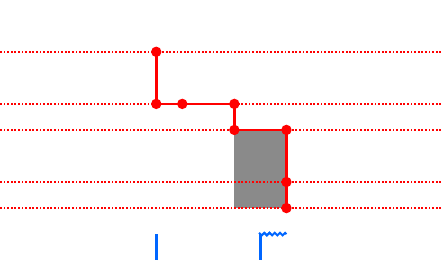}}%
    \put(0.71472955,0.0465883){\color[rgb]{0,0.4,1}\makebox(0,0)[lt]{\lineheight{1.25}\smash{\begin{tabular}[t]{l}$\delta_a \geq 1$\end{tabular}}}}%
    \put(0,0){\includegraphics[width=\unitlength,page=2]{small-transfo-columns-before.pdf}}%
    \put(0.64705889,0.55294149){\color[rgb]{1,0,0}\makebox(0,0)[t]{\lineheight{1.25}\smash{\begin{tabular}[t]{c}$j_{i_1}$\end{tabular}}}}%
    \put(0.52941189,0.55294149){\color[rgb]{1,0,0}\makebox(0,0)[t]{\lineheight{1.25}\smash{\begin{tabular}[t]{c}$j_{i_2}$\end{tabular}}}}%
    \put(0.41176472,0.55294149){\color[rgb]{1,0,0}\makebox(0,0)[t]{\lineheight{1.25}\smash{\begin{tabular}[t]{c}$j_{i_3}$\end{tabular}}}}%
    \put(0.35294132,0.55294149){\color[rgb]{1,0,0}\makebox(0,0)[t]{\lineheight{1.25}\smash{\begin{tabular}[t]{c}$j_{i_k}$\end{tabular}}}}%
    \put(0,0){\includegraphics[width=\unitlength,page=3]{small-transfo-columns-before.pdf}}%
  \end{picture}%
\endgroup%

				\qquad
				\def\svgwidth{0.45\textwidth}
				%% Creator: Inkscape 1.2.2 (b0a8486541, 2022-12-01), www.inkscape.org
%% PDF/EPS/PS + LaTeX output extension by Johan Engelen, 2010
%% Accompanies image file 'small-transfo-columns-after.pdf' (pdf, eps, ps)
%%
%% To include the image in your LaTeX document, write
%%   \input{<filename>.pdf_tex}
%%  instead of
%%   \includegraphics{<filename>.pdf}
%% To scale the image, write
%%   \def\svgwidth{<desired width>}
%%   \input{<filename>.pdf_tex}
%%  instead of
%%   \includegraphics[width=<desired width>]{<filename>.pdf}
%%
%% Images with a different path to the parent latex file can
%% be accessed with the `import' package (which may need to be
%% installed) using
%%   \usepackage{import}
%% in the preamble, and then including the image with
%%   \import{<path to file>}{<filename>.pdf_tex}
%% Alternatively, one can specify
%%   \graphicspath{{<path to file>/}}
%% 
%% For more information, please see info/svg-inkscape on CTAN:
%%   http://tug.ctan.org/tex-archive/info/svg-inkscape
%%
\begingroup%
  \makeatletter%
  \providecommand\color[2][]{%
    \errmessage{(Inkscape) Color is used for the text in Inkscape, but the package 'color.sty' is not loaded}%
    \renewcommand\color[2][]{}%
  }%
  \providecommand\transparent[1]{%
    \errmessage{(Inkscape) Transparency is used (non-zero) for the text in Inkscape, but the package 'transparent.sty' is not loaded}%
    \renewcommand\transparent[1]{}%
  }%
  \providecommand\rotatebox[2]{#2}%
  \newcommand*\fsize{\dimexpr\f@size pt\relax}%
  \newcommand*\lineheight[1]{\fontsize{\fsize}{#1\fsize}\selectfont}%
  \ifx\svgwidth\undefined%
    \setlength{\unitlength}{127.5000036bp}%
    \ifx\svgscale\undefined%
      \relax%
    \else%
      \setlength{\unitlength}{\unitlength * \real{\svgscale}}%
    \fi%
  \else%
    \setlength{\unitlength}{\svgwidth}%
  \fi%
  \global\let\svgwidth\undefined%
  \global\let\svgscale\undefined%
  \makeatother%
  \begin{picture}(1,0.58762385)%
    \lineheight{1}%
    \setlength\tabcolsep{0pt}%
    \put(0,0){\includegraphics[width=\unitlength,page=1]{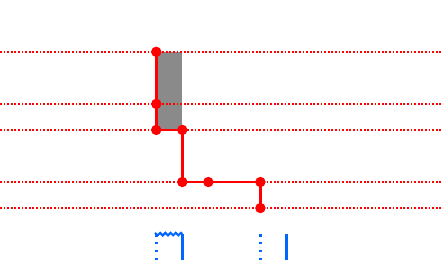}}%
    \put(0.71472921,0.04456473){\color[rgb]{0,0.4,1}\makebox(0,0)[lt]{\lineheight{1.25}\smash{\begin{tabular}[t]{l}$\delta_a-1$\end{tabular}}}}%
    \put(0,0){\includegraphics[width=\unitlength,page=2]{small-transfo-columns-after.pdf}}%
    \put(0.35294114,0.55294149){\color[rgb]{1,0,0}\makebox(0,0)[t]{\lineheight{1.25}\smash{\begin{tabular}[t]{c}$j_{i_1}$\end{tabular}}}}%
    \put(0.58823548,0.55294149){\color[rgb]{1,0,0}\makebox(0,0)[t]{\lineheight{1.25}\smash{\begin{tabular}[t]{c}$j_{i_2}$\end{tabular}}}}%
    \put(0.47058831,0.55294149){\color[rgb]{1,0,0}\makebox(0,0)[t]{\lineheight{1.25}\smash{\begin{tabular}[t]{c}$j_{i_3}$\end{tabular}}}}%
    \put(0.4117649,0.55294149){\color[rgb]{1,0,0}\makebox(0,0)[t]{\lineheight{1.25}\smash{\begin{tabular}[t]{c}$j_{i_k}$\end{tabular}}}}%
    \put(0,0){\includegraphics[width=\unitlength,page=3]{small-transfo-columns-after.pdf}}%
  \end{picture}%
\endgroup%

			\end{center}
			\caption{Schematic illustration of the transformation in the proof of~\Cref{lem_vertical_small_change}.}
			\label{fig_preserve_column_one}
		\end{figure}
		
		\begin{figure}[htb]
			\centering
			\begin{center}
				\centering
				\def\svgwidth{0.45\textwidth}
				%% Creator: Inkscape 1.2.2 (b0a8486541, 2022-12-01), www.inkscape.org
%% PDF/EPS/PS + LaTeX output extension by Johan Engelen, 2010
%% Accompanies image file 'transfo-columns-before.pdf' (pdf, eps, ps)
%%
%% To include the image in your LaTeX document, write
%%   \input{<filename>.pdf_tex}
%%  instead of
%%   \includegraphics{<filename>.pdf}
%% To scale the image, write
%%   \def\svgwidth{<desired width>}
%%   \input{<filename>.pdf_tex}
%%  instead of
%%   \includegraphics[width=<desired width>]{<filename>.pdf}
%%
%% Images with a different path to the parent latex file can
%% be accessed with the `import' package (which may need to be
%% installed) using
%%   \usepackage{import}
%% in the preamble, and then including the image with
%%   \import{<path to file>}{<filename>.pdf_tex}
%% Alternatively, one can specify
%%   \graphicspath{{<path to file>/}}
%% 
%% For more information, please see info/svg-inkscape on CTAN:
%%   http://tug.ctan.org/tex-archive/info/svg-inkscape
%%
\begingroup%
  \makeatletter%
  \providecommand\color[2][]{%
    \errmessage{(Inkscape) Color is used for the text in Inkscape, but the package 'color.sty' is not loaded}%
    \renewcommand\color[2][]{}%
  }%
  \providecommand\transparent[1]{%
    \errmessage{(Inkscape) Transparency is used (non-zero) for the text in Inkscape, but the package 'transparent.sty' is not loaded}%
    \renewcommand\transparent[1]{}%
  }%
  \providecommand\rotatebox[2]{#2}%
  \newcommand*\fsize{\dimexpr\f@size pt\relax}%
  \newcommand*\lineheight[1]{\fontsize{\fsize}{#1\fsize}\selectfont}%
  \ifx\svgwidth\undefined%
    \setlength{\unitlength}{127.5000036bp}%
    \ifx\svgscale\undefined%
      \relax%
    \else%
      \setlength{\unitlength}{\unitlength * \real{\svgscale}}%
    \fi%
  \else%
    \setlength{\unitlength}{\svgwidth}%
  \fi%
  \global\let\svgwidth\undefined%
  \global\let\svgscale\undefined%
  \makeatother%
  \begin{picture}(1,0.77585924)%
    \lineheight{1}%
    \setlength\tabcolsep{0pt}%
    \put(0,0){\includegraphics[width=\unitlength,page=1]{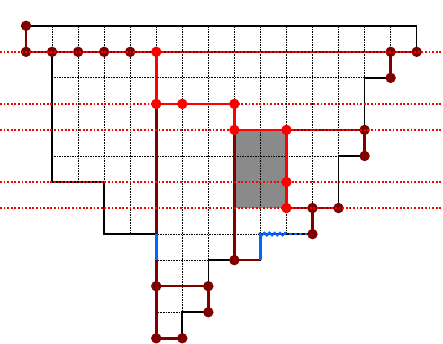}}%
    \put(0.80296448,0.23280012){\color[rgb]{0,0.4,1}\makebox(0,0)[lt]{\lineheight{1.25}\smash{\begin{tabular}[t]{l}$\delta_a=2$\end{tabular}}}}%
    \put(0,0){\includegraphics[width=\unitlength,page=2]{transfo-columns-before.pdf}}%
    \put(0.64705887,0.74117687){\color[rgb]{1,0,0}\makebox(0,0)[t]{\lineheight{1.25}\smash{\begin{tabular}[t]{c}$j_{i_1}$\end{tabular}}}}%
    \put(0.52941187,0.74117687){\color[rgb]{1,0,0}\makebox(0,0)[t]{\lineheight{1.25}\smash{\begin{tabular}[t]{c}$j_{i_2}$\end{tabular}}}}%
    \put(0.4117647,0.74117687){\color[rgb]{1,0,0}\makebox(0,0)[t]{\lineheight{1.25}\smash{\begin{tabular}[t]{c}$j_{i_3}$\end{tabular}}}}%
    \put(0.35294129,0.74117687){\color[rgb]{1,0,0}\makebox(0,0)[t]{\lineheight{1.25}\smash{\begin{tabular}[t]{c}$j_{i_4}$\end{tabular}}}}%
    \put(0,0){\includegraphics[width=\unitlength,page=3]{transfo-columns-before.pdf}}%
  \end{picture}%
\endgroup%

				\qquad
				\def\svgwidth{0.45\textwidth}
				%% Creator: Inkscape 1.2.2 (b0a8486541, 2022-12-01), www.inkscape.org
%% PDF/EPS/PS + LaTeX output extension by Johan Engelen, 2010
%% Accompanies image file 'transfo-columns-after.pdf' (pdf, eps, ps)
%%
%% To include the image in your LaTeX document, write
%%   \input{<filename>.pdf_tex}
%%  instead of
%%   \includegraphics{<filename>.pdf}
%% To scale the image, write
%%   \def\svgwidth{<desired width>}
%%   \input{<filename>.pdf_tex}
%%  instead of
%%   \includegraphics[width=<desired width>]{<filename>.pdf}
%%
%% Images with a different path to the parent latex file can
%% be accessed with the `import' package (which may need to be
%% installed) using
%%   \usepackage{import}
%% in the preamble, and then including the image with
%%   \import{<path to file>}{<filename>.pdf_tex}
%% Alternatively, one can specify
%%   \graphicspath{{<path to file>/}}
%% 
%% For more information, please see info/svg-inkscape on CTAN:
%%   http://tug.ctan.org/tex-archive/info/svg-inkscape
%%
\begingroup%
  \makeatletter%
  \providecommand\color[2][]{%
    \errmessage{(Inkscape) Color is used for the text in Inkscape, but the package 'color.sty' is not loaded}%
    \renewcommand\color[2][]{}%
  }%
  \providecommand\transparent[1]{%
    \errmessage{(Inkscape) Transparency is used (non-zero) for the text in Inkscape, but the package 'transparent.sty' is not loaded}%
    \renewcommand\transparent[1]{}%
  }%
  \providecommand\rotatebox[2]{#2}%
  \newcommand*\fsize{\dimexpr\f@size pt\relax}%
  \newcommand*\lineheight[1]{\fontsize{\fsize}{#1\fsize}\selectfont}%
  \ifx\svgwidth\undefined%
    \setlength{\unitlength}{127.5000036bp}%
    \ifx\svgscale\undefined%
      \relax%
    \else%
      \setlength{\unitlength}{\unitlength * \real{\svgscale}}%
    \fi%
  \else%
    \setlength{\unitlength}{\svgwidth}%
  \fi%
  \global\let\svgwidth\undefined%
  \global\let\svgscale\undefined%
  \makeatother%
  \begin{picture}(1,0.77585924)%
    \lineheight{1}%
    \setlength\tabcolsep{0pt}%
    \put(0,0){\includegraphics[width=\unitlength,page=1]{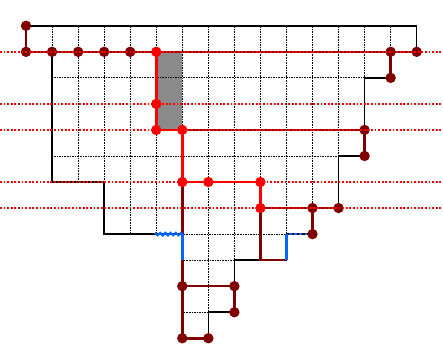}}%
    \put(0.80296447,0.23280012){\color[rgb]{0,0.4,1}\makebox(0,0)[lt]{\lineheight{1.25}\smash{\begin{tabular}[t]{l}$\delta'_a=2-1=1$\end{tabular}}}}%
    \put(0,0){\includegraphics[width=\unitlength,page=2]{transfo-columns-after.pdf}}%
    \put(0.35294112,0.74117789){\color[rgb]{1,0,0}\makebox(0,0)[t]{\lineheight{1.25}\smash{\begin{tabular}[t]{c}$j_{i_1}$\end{tabular}}}}%
    \put(0.58823545,0.74117789){\color[rgb]{1,0,0}\makebox(0,0)[t]{\lineheight{1.25}\smash{\begin{tabular}[t]{c}$j_{i_2}$\end{tabular}}}}%
    \put(0.47058829,0.74117789){\color[rgb]{1,0,0}\makebox(0,0)[t]{\lineheight{1.25}\smash{\begin{tabular}[t]{c}$j_{i_3}$\end{tabular}}}}%
    \put(0.41176487,0.74117789){\color[rgb]{1,0,0}\makebox(0,0)[t]{\lineheight{1.25}\smash{\begin{tabular}[t]{c}$j_{i_4}$\end{tabular}}}}%
  \end{picture}%
\endgroup%

			\end{center}
			\caption{Example of the transformation in the proof of~\Cref{lem_vertical_small_change}.}
			\label{fig_preserve_column_two}
		\end{figure}

		\begin{lemma}\label{lem_vertical_flushing}
			Let $\delta,\delta'$ be two increment vectors with respect to $\nu$.
			For every $(\delta,\nu)$-tree $T$, there is a unique~$(\delta',\nu)$-tree $T'$ such that $$c_\delta(T)=c_{\delta'}(T').$$   
		\end{lemma}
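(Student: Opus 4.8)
The plan is to reduce the general statement to the single-step case already established in \Cref{lem_vertical_small_change}. Uniqueness is immediate: by \Cref{lem_down_flushing}, a $(\delta',\nu)$-tree is completely determined by its column vector, so at most one $T'$ can satisfy $c_\delta(T)=c_{\delta'}(T')$. It therefore remains only to prove existence.

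For existence, I would observe that both $\delta$ and $\delta'$ lie in the box $\prod_{i=1}^n \{0,1,\dots,\nu_i\}$ of valid increment vectors with respect to $\nu$. Any two integer points in such a box can be joined by a lattice path that changes a single coordinate by $\pm 1$ at each step and remains inside the box; concretely, one adjusts the coordinates one at a time, bringing $\delta_1$ to $\delta'_1$ in unit steps, then $\delta_2$ to $\delta'_2$, and so on. Since each intermediate value of the $i$-th coordinate lies between $\delta_i$ and $\delta'_i$, and both endpoints satisfy $0 \leq \delta_i, \delta'_i \leq \nu_i$, every intermediate vector is again a valid increment vector with respect to $\nu$. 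This yields a sequence
$$\delta = \delta^{(0)}, \delta^{(1)}, \dots, \delta^{(N)} = \delta'$$
in which consecutive vectors differ by $\pm 1$ in exactly one entry.

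Then I would apply \Cref{lem_vertical_small_change} repeatedly along this chain. Starting from $T^{(0)}=T$, the lemma produces for each $t$ a unique $(\delta^{(t+1)},\nu)$-tree $T^{(t+1)}$ with $c_{\delta^{(t)}}(T^{(t)})=c_{\delta^{(t+1)}}(T^{(t+1)})$. Composing these equalities along the whole chain gives $c_\delta(T)=c_{\delta'}(T^{(N)})$, so $T'=T^{(N)}$ is the desired tree, and uniqueness of $T'$ has already been noted.

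The essential content lies entirely in \Cref{lem_vertical_small_change}; the only thing genuinely to verify here is that the connecting path between $\delta$ and $\delta'$ can be chosen to stay within the valid range, which the coordinate-by-coordinate construction guarantees. I expect no further obstacle, since the column vector is transported unchanged across every single step of the chain, so the composition is transparent.
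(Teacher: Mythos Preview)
Your proposal is correct and follows essentially the same approach as the paper: connect $\delta$ and $\delta'$ by a chain of increment vectors each differing from the previous by $\pm 1$ in a single entry, and apply \Cref{lem_vertical_small_change} along the chain. You add slightly more detail (the explicit coordinate-by-coordinate path and the separate invocation of \Cref{lem_down_flushing} for uniqueness), but the argument is the same.
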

		\begin{proof}
			Any two increment vectors with respect to $\nu$ can be connected by a sequence of increment vectors, such that each vector is obtained from the previous one by either adding or subtracting 1 to one of its entries. The result then follows by~\Cref{lem_vertical_small_change}.  
		\end{proof}
		
		\begin{proof}[Proof of \Cref{prop_column_vectors_characterization}]
			A $(\delta,\nu)$-tree $T$ is completely characterized by its column vector by~\Cref{lem_down_flushing}. 
			Furthermore, the characterization of column vectors of $(\delta,\nu)$-trees is independent of the choice of increment vector $\delta$, by~\Cref{lem_vertical_flushing}. 
			So, we just need to prove the three conditions of the proposition for one particular choice of $\delta$. We choose the extreme case $\delta=\delta^{\max}$, where $\delta_i=\nu_i$. In this case $(\delta,\nu)$-trees are just the classical $\nu$-trees.  
			
			Classifying the column vectors of $\nu$-trees is the same as classifying the row vectors of $\nurev$-trees, because reversing the path transforms column vectors to row vectors, and vice versa. The three conditions of the proposition are then equivalent to the three conditions of~\Cref{prop_row_vectors_characterization} (for the extreme maximal case $\delta$).
		\end{proof}

		\subsection{Reduced column vectors}
		% We say that a lattice point $p\in L_{\delta,\nu}$ is \emph{relevant} if it is neither the top left corner of $F_{\delta,\nu}$ nor the initial point of a north step of the left boundary path $\hat \nu$ of $F_{\delta,\nu}$ defined in~\eqref{eq_nu_hat}. \clementm{reformulate: non-relevant if it is the leftmost point of a row of $F_{\delta,\nu}$?}
		% All other points in $L_{\delta,\nu}$ are called \emph{non-relevant}. 
		We say that a lattice point $p\in L_{\delta,\nu}$ is \emph{non-relevant} if it is the leftmost point of a row of $L_{\delta,\nu}$. 
		All other points in $L_{\delta,\nu}$ are called \emph{relevant}. 
		Figure~\ref{fig_columns_and_reduced_columns} (right) illustrates an example where the relevant points are filled brown, and the non-relevant points are unfilled green. 
		
		The \emph{reduced columns} are the columns of relevant points in $L_{\delta,\nu}$. These are shown in yellow in~\Cref{fig_columns_and_reduced_columns} (right).
		The three examples for $\nu=ENEEN$ and all possible choices of $\delta$ are shown in~\Cref{fig_column_order_reduced}. The reduced columns are colored yellow here as well for easier visualization.  
		
		In order to define the reduced column vector of a $(\delta,\nu)$-tree, it is convenient to assign an order $\overline j_0 \prec_\delta \dots \prec_\delta \overline j_{m-1}$ to the reduced columns of $F_{\delta,\nu}$, obtained by reading the reduced columns from shortest to longest, from right to left, as illustrated in~\Cref{fig_columns_and_reduced_columns} (right). See also the three examples in~\Cref{fig_column_order_reduced}.  
		
		The \emph{reduced column vector} of a $(\delta,\nu)$-tree $T$ is the vector 
		$$\overline c_\delta(T)=(\overline c_0,\dots,\overline c_{m-1}),$$ 
		where $\overline c_i+1$ is the number of nodes of $T$ in reduced column $\overline j_i$.  
		For instance, the three $(\delta,\nu)$-trees (for the three choices of $\delta$)  in~\Cref{fig_column_order_reduced}, all have reduced column vector $(0,1,0)$. This means, in each of the cases, there are $0+1$ nodes of the tree in reduced column $\overline j_0$, $1+1$ nodes in reduced column $\overline j_1$,  and $0+1$ nodes in reduced column $\overline j_2$. Note that the green nodes are non-relevant and do not belong to the reduced columns, and so are not counted here.
		
		\begin{figure}[htb]
			\centering
			\begin{center}
				\centering
				\def\svgwidth{.25\linewidth}
				%% Creator: Inkscape 1.2.2 (b0a8486541, 2022-12-01), www.inkscape.org
%% PDF/EPS/PS + LaTeX output extension by Johan Engelen, 2010
%% Accompanies image file 'tree-delta0-reduced.pdf' (pdf, eps, ps)
%%
%% To include the image in your LaTeX document, write
%%   \input{<filename>.pdf_tex}
%%  instead of
%%   \includegraphics{<filename>.pdf}
%% To scale the image, write
%%   \def\svgwidth{<desired width>}
%%   \input{<filename>.pdf_tex}
%%  instead of
%%   \includegraphics[width=<desired width>]{<filename>.pdf}
%%
%% Images with a different path to the parent latex file can
%% be accessed with the `import' package (which may need to be
%% installed) using
%%   \usepackage{import}
%% in the preamble, and then including the image with
%%   \import{<path to file>}{<filename>.pdf_tex}
%% Alternatively, one can specify
%%   \graphicspath{{<path to file>/}}
%% 
%% For more information, please see info/svg-inkscape on CTAN:
%%   http://tug.ctan.org/tex-archive/info/svg-inkscape
%%
\begingroup%
  \makeatletter%
  \providecommand\color[2][]{%
    \errmessage{(Inkscape) Color is used for the text in Inkscape, but the package 'color.sty' is not loaded}%
    \renewcommand\color[2][]{}%
  }%
  \providecommand\transparent[1]{%
    \errmessage{(Inkscape) Transparency is used (non-zero) for the text in Inkscape, but the package 'transparent.sty' is not loaded}%
    \renewcommand\transparent[1]{}%
  }%
  \providecommand\rotatebox[2]{#2}%
  \newcommand*\fsize{\dimexpr\f@size pt\relax}%
  \newcommand*\lineheight[1]{\fontsize{\fsize}{#1\fsize}\selectfont}%
  \ifx\svgwidth\undefined%
    \setlength{\unitlength}{24.75008597bp}%
    \ifx\svgscale\undefined%
      \relax%
    \else%
      \setlength{\unitlength}{\unitlength * \real{\svgscale}}%
    \fi%
  \else%
    \setlength{\unitlength}{\svgwidth}%
  \fi%
  \global\let\svgwidth\undefined%
  \global\let\svgscale\undefined%
  \makeatother%
  \begin{picture}(1,0.8487477)%
    \lineheight{1}%
    \setlength\tabcolsep{0pt}%
    \put(0,0){\includegraphics[width=\unitlength,page=1]{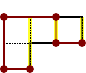}}%
    \put(0.65151351,0.74242145){\color[rgb]{0.50196078,0,0}\makebox(0,0)[t]{\lineheight{1.25}\smash{\begin{tabular}[t]{c}$\bar j_1$\end{tabular}}}}%
    \put(0.95454217,0.74242145){\color[rgb]{0.50196078,0,0}\makebox(0,0)[t]{\lineheight{1.25}\smash{\begin{tabular}[t]{c}$\bar j_0$\end{tabular}}}}%
    \put(0.34848396,0.74242145){\color[rgb]{0.50196078,0,0}\makebox(0,0)[t]{\lineheight{1.25}\smash{\begin{tabular}[t]{c}$\bar j_2$\end{tabular}}}}%
    \put(0,0){\includegraphics[width=\unitlength,page=2]{tree-delta0-reduced.pdf}}%
  \end{picture}%
\endgroup%

				\qquad
				\def\svgwidth{.25\linewidth}
				%% Creator: Inkscape 1.2.2 (b0a8486541, 2022-12-01), www.inkscape.org
%% PDF/EPS/PS + LaTeX output extension by Johan Engelen, 2010
%% Accompanies image file 'tree-delta1-reduced.pdf' (pdf, eps, ps)
%%
%% To include the image in your LaTeX document, write
%%   \input{<filename>.pdf_tex}
%%  instead of
%%   \includegraphics{<filename>.pdf}
%% To scale the image, write
%%   \def\svgwidth{<desired width>}
%%   \input{<filename>.pdf_tex}
%%  instead of
%%   \includegraphics[width=<desired width>]{<filename>.pdf}
%%
%% Images with a different path to the parent latex file can
%% be accessed with the `import' package (which may need to be
%% installed) using
%%   \usepackage{import}
%% in the preamble, and then including the image with
%%   \import{<path to file>}{<filename>.pdf_tex}
%% Alternatively, one can specify
%%   \graphicspath{{<path to file>/}}
%% 
%% For more information, please see info/svg-inkscape on CTAN:
%%   http://tug.ctan.org/tex-archive/info/svg-inkscape
%%
\begingroup%
  \makeatletter%
  \providecommand\color[2][]{%
    \errmessage{(Inkscape) Color is used for the text in Inkscape, but the package 'color.sty' is not loaded}%
    \renewcommand\color[2][]{}%
  }%
  \providecommand\transparent[1]{%
    \errmessage{(Inkscape) Transparency is used (non-zero) for the text in Inkscape, but the package 'transparent.sty' is not loaded}%
    \renewcommand\transparent[1]{}%
  }%
  \providecommand\rotatebox[2]{#2}%
  \newcommand*\fsize{\dimexpr\f@size pt\relax}%
  \newcommand*\lineheight[1]{\fontsize{\fsize}{#1\fsize}\selectfont}%
  \ifx\svgwidth\undefined%
    \setlength{\unitlength}{24.75008597bp}%
    \ifx\svgscale\undefined%
      \relax%
    \else%
      \setlength{\unitlength}{\unitlength * \real{\svgscale}}%
    \fi%
  \else%
    \setlength{\unitlength}{\svgwidth}%
  \fi%
  \global\let\svgwidth\undefined%
  \global\let\svgscale\undefined%
  \makeatother%
  \begin{picture}(1,0.8487477)%
    \lineheight{1}%
    \setlength\tabcolsep{0pt}%
    \put(0,0){\includegraphics[width=\unitlength,page=1]{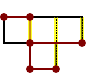}}%
    \put(0.65151351,0.74242145){\color[rgb]{0.50196078,0,0}\makebox(0,0)[t]{\lineheight{1.25}\smash{\begin{tabular}[t]{c}$\bar j_2$\end{tabular}}}}%
    \put(0.95454217,0.74242145){\color[rgb]{0.50196078,0,0}\makebox(0,0)[t]{\lineheight{1.25}\smash{\begin{tabular}[t]{c}$\bar j_0$\end{tabular}}}}%
    \put(0.34848396,0.74242145){\color[rgb]{0.50196078,0,0}\makebox(0,0)[t]{\lineheight{1.25}\smash{\begin{tabular}[t]{c}$\bar j_1$\end{tabular}}}}%
    \put(0,0){\includegraphics[width=\unitlength,page=2]{tree-delta1-reduced.pdf}}%
  \end{picture}%
\endgroup%

				\qquad
				\def\svgwidth{.25\linewidth}
				%% Creator: Inkscape 1.2.2 (b0a8486541, 2022-12-01), www.inkscape.org
%% PDF/EPS/PS + LaTeX output extension by Johan Engelen, 2010
%% Accompanies image file 'tree-delta2-reduced.pdf' (pdf, eps, ps)
%%
%% To include the image in your LaTeX document, write
%%   \input{<filename>.pdf_tex}
%%  instead of
%%   \includegraphics{<filename>.pdf}
%% To scale the image, write
%%   \def\svgwidth{<desired width>}
%%   \input{<filename>.pdf_tex}
%%  instead of
%%   \includegraphics[width=<desired width>]{<filename>.pdf}
%%
%% Images with a different path to the parent latex file can
%% be accessed with the `import' package (which may need to be
%% installed) using
%%   \usepackage{import}
%% in the preamble, and then including the image with
%%   \import{<path to file>}{<filename>.pdf_tex}
%% Alternatively, one can specify
%%   \graphicspath{{<path to file>/}}
%% 
%% For more information, please see info/svg-inkscape on CTAN:
%%   http://tug.ctan.org/tex-archive/info/svg-inkscape
%%
\begingroup%
  \makeatletter%
  \providecommand\color[2][]{%
    \errmessage{(Inkscape) Color is used for the text in Inkscape, but the package 'color.sty' is not loaded}%
    \renewcommand\color[2][]{}%
  }%
  \providecommand\transparent[1]{%
    \errmessage{(Inkscape) Transparency is used (non-zero) for the text in Inkscape, but the package 'transparent.sty' is not loaded}%
    \renewcommand\transparent[1]{}%
  }%
  \providecommand\rotatebox[2]{#2}%
  \newcommand*\fsize{\dimexpr\f@size pt\relax}%
  \newcommand*\lineheight[1]{\fontsize{\fsize}{#1\fsize}\selectfont}%
  \ifx\svgwidth\undefined%
    \setlength{\unitlength}{24.75008597bp}%
    \ifx\svgscale\undefined%
      \relax%
    \else%
      \setlength{\unitlength}{\unitlength * \real{\svgscale}}%
    \fi%
  \else%
    \setlength{\unitlength}{\svgwidth}%
  \fi%
  \global\let\svgwidth\undefined%
  \global\let\svgscale\undefined%
  \makeatother%
  \begin{picture}(1,0.8487477)%
    \lineheight{1}%
    \setlength\tabcolsep{0pt}%
    \put(0,0){\includegraphics[width=\unitlength,page=1]{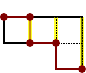}}%
    \put(0.65151351,0.74242145){\color[rgb]{0.50196078,0,0}\makebox(0,0)[t]{\lineheight{1.25}\smash{\begin{tabular}[t]{c}$\bar j_0$\end{tabular}}}}%
    \put(0.95454217,0.74242145){\color[rgb]{0.50196078,0,0}\makebox(0,0)[t]{\lineheight{1.25}\smash{\begin{tabular}[t]{c}$\bar j_2$\end{tabular}}}}%
    \put(0.34848396,0.74242145){\color[rgb]{0.50196078,0,0}\makebox(0,0)[t]{\lineheight{1.25}\smash{\begin{tabular}[t]{c}$\bar j_1$\end{tabular}}}}%
    \put(0,0){\includegraphics[width=\unitlength,page=2]{tree-delta2-reduced.pdf}}%
  \end{picture}%
\endgroup%

			\end{center}
			\caption{The ordering $\overline j_0\prec_\delta \dots\prec_\delta \overline j_2$ of the reduced columns of $L_{\delta,\nu}$ for \mbox{$\nu=ENEEN$} and the three possible choices of $\delta=(2,0),(1,0)$ and $(0,0)$. The reduced columns (colored yellow) are read from shortest to longest, from right to left.
				The reduced column vector of the shown trees is $\overline c_\delta(T)=(0,1,0)$ in all three cases.}
			\label{fig_column_order_reduced}
		\end{figure}

		\begin{proposition}\label{prop_reduced_column_vectors_characterization}
			A $(\delta,\nu)$-tree $T$ is completely characterized by its reduced column vector. Moreover, $(\overline c_0,\dots,\overline c_{m-1})$ is the reduced column vector of some $(\delta,\nu)$-tree if and only if
			
			\begin{enumerate}
				\item $\overline c_i\geq 0$ for all $i$, 
				\item $\sum_{i=0}^j \overline c_i \leq \sum_{i=0}^j \nurev_i 
				$ for all $j$.
			\end{enumerate}
		\end{proposition}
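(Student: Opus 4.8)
The plan is to imitate, step for step, the proof of the full column vector statement in \Cref{prop_column_vectors_characterization}: first reduce to the single extreme increment vector $\delta=\delta^{\max}$ by a $\delta$-independence argument for reduced column vectors, and then verify both claims directly in that base case, where the reduced column vector is essentially the full column vector. Throughout I would use that, by \Cref{lem_down_flushing}, a $(\delta,\nu)$-tree is already determined by its full column vector $c_\delta(T)$, so everything can be phrased in terms of column vectors.

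I would settle the base case $\delta=\delta^{\max}$ first, since it is clean. There $\gamma_i=\nu_i-\delta_i=0$ for all $i\geq 1$, so the leftmost lattice point of every row of $L_{\delta,\nu}$ lies in the column $x=0$; hence the whole column $x=0$ is non-relevant and every other point of $L_{\delta,\nu}$ is relevant. Since $x=0$ is a longest column and is the leftmost among the longest, it is the last column $j_m$ in the order $\prec_\delta$, and the remaining columns keep their lengths after reduction, so $\overline j_i=j_i$ for $i<m$. Therefore $\overline c_{\delta^{\max}}(T)=(c_0,\dots,c_{m-1})$ is obtained from $c_{\delta^{\max}}(T)=(c_0,\dots,c_m)$ simply by deleting the last entry. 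Determination is immediate: by \Cref{prop_column_vectors_characterization} the tree is determined by $(c_0,\dots,c_m)$, and the missing entry is recovered from the equality $\sum_{i=0}^m c_i=\sum_{i=0}^m\nurev_i$. For the characterization, given $(\overline c_0,\dots,\overline c_{m-1})$ satisfying conditions~(1) and~(2), I would set $c_m:=\sum_{i=0}^m\nurev_i-\sum_{i=0}^{m-1}\overline c_i$; condition~(2) at $j=m-1$ yields $c_m\geq \nurev_m\geq 0$, so $(\overline c_0,\dots,\overline c_{m-1},c_m)$ satisfies all three conditions of \Cref{prop_column_vectors_characterization} and is a genuine column vector. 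Conversely, a reduced column vector coming from a tree inherits conditions~(1) and~(2) from the corresponding conditions on its full column vector. This proves \Cref{prop_reduced_column_vectors_characterization} for $\delta^{\max}$.

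The heart of the argument, and the step I expect to be the main obstacle, is to show that the set of reduced column vectors, together with the uniqueness of the tree realizing each of them, is independent of $\delta$; this then transfers both claims from $\delta^{\max}$ to arbitrary $\delta$. As in \Cref{lem_vertical_flushing}, it suffices to treat the one-step case where $\delta'$ is obtained from $\delta$ by adding or subtracting $1$ in a single entry, and to produce a bijection between $(\delta,\nu)$-trees and $(\delta',\nu)$-trees that preserves the reduced column vector. The tempting shortcut of reusing the transformation of \Cref{lem_vertical_small_change} fails: that bijection preserves the full column vector, but the passage from the full to the reduced column vector is itself $\delta$-dependent (the relevant/non-relevant partition changes with the shape $L_{\delta,\nu}$), so preserving $c_\delta$ does not preserve $\overline c_\delta$. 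I would therefore build a transformation tailored to relevant points: track how the leftmost-in-row (non-relevant) points move when the single entry $\delta_a$ is changed, and shift the tree nodes so that the number of \emph{relevant} nodes in each column is preserved, rather than the number of all nodes. The delicate point is that a node may switch between relevant and non-relevant status under the change of shape, so the case analysis must guarantee that these switches are compensated column by column; this is exactly where I expect the real work to lie.

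With the one-step reduced-vector-preserving bijection in hand, chaining it along a path of increment vectors gives a bijection between $(\delta,\nu)$-trees and $(\delta^{\max},\nu)$-trees preserving $\overline c$. Injectivity plus the $\delta^{\max}$ determination then yields that every $(\delta,\nu)$-tree is determined by its reduced column vector, and surjectivity onto the common image, identified in the base case, gives that the reduced column vectors are exactly the vectors satisfying conditions~(1) and~(2). As an alternative to the first half, one could instead prove determination directly by a \emph{reduced down-flushing} algorithm, the relevant-point analogue of \Cref{lem_down_flushing}, reconstructing the relevant nodes column by column from $\overline c_\delta(T)$ and checking that the non-relevant nodes are then forced; but the $\delta$-independence step above still seems to be the unavoidable core of the proof.
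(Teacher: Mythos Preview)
Your proposal is correct and follows essentially the same strategy as the paper: prove a one-step reduced-column-vector-preserving bijection (the paper's \Cref{lem_vertical_small_change_reduced}), chain it to get $\delta$-independence (\Cref{lem_reduced_vertical_flushing}), and reduce to the base case $\delta=\delta^{\max}$. Two minor differences: the paper promotes your ``alternative'' reduced down-flushing algorithm to a standalone lemma (\Cref{lem_reduced_down_flushing}) and uses it as the primary determination tool rather than deducing determination from the bijection plus the base case; and for the base case the paper phrases the identification via the reversed path $\nurev$ and row vectors of $\nurev$-trees rather than via \Cref{prop_column_vectors_characterization}, but this is equivalent to your observation that for $\delta^{\max}$ the reduced column vector is the full column vector with its last entry deleted.
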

		
		The proof of this proposition follows the same steps as the proof of~\Cref{prop_column_vectors_characterization} for column vectors. 
		We write all the (somewhat repeated) details for self containment. 
		
		\begin{lemma}\label{lem_reduced_down_flushing}
			A $(\delta,\nu)$-tree $T$ can be reconstructed from its reduced column vector. 
			%$c_\delta(T)=(c_0,\dots,c_m)$ via a down flushing algorithm.    
		\end{lemma}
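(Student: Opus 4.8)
The plan is to prove this by adapting the down flushing algorithm of \Cref{lem_down_flushing}, producing what one may call the \emph{reduced down flushing algorithm}. As in that proof, I would label the nodes of $T$ as $p_0,p_1,\dots,p_r$ from right to left and from bottom to top, and reconstruct $T$ by re-inserting these nodes one at a time in this order, each at the bottom-most position of its column that is not forbidden by the nodes already placed, a position being forbidden exactly when it lies to the left, in the same row, of an already-placed node that is not the top-most node of its column.

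The key new observation is that a \emph{non-relevant} node---one that is leftmost in its row of $L_{\delta,\nu}$---never forbids any position, simply because there is no lattice point of $L_{\delta,\nu}$ to its left in its row. Consequently, the set of forbidden positions arising at each stage of the reconstruction depends only on the relevant nodes placed so far, and the placement of the relevant nodes of $T$ is governed entirely by how many relevant nodes occupy each reduced column, that is, by the reduced column vector $\overline c_\delta(T)$. I would first run the reconstruction on the relevant nodes alone, inserting them from right to left and from bottom to top at the bottom-most non-forbidden relevant position of their reduced column, the number available in each $\overline j_i$ being dictated by $\overline c_\delta(T)$; the maximality argument of \Cref{lem_down_flushing} transfers essentially verbatim to show that this places each relevant node correctly and uniquely.

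It then remains to recover the non-relevant nodes. Each such node is the leftmost point of its row, and I would argue, again by the maximality contradiction used in \Cref{lem_down_flushing}, that once the relevant nodes are fixed, the leftmost point of a row must be adjoined to $T$ precisely when it is compatible with all the chosen nodes and its omission would contradict the maximality of $T$; in this way its presence and position are forced. Since this determines the remaining nodes, $T$ is reconstructed uniquely from $\overline c_\delta(T)$, as claimed. This also sets up the pattern to be repeated, in analogy with \Cref{prop_column_vectors_characterization}, towards \Cref{prop_reduced_column_vectors_characterization}.

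I expect the genuine difficulty to lie in this last step. Unlike the full column situation, the non-relevant nodes are not part of the input data and are not all present in every $(\delta,\nu)$-tree, so one must check carefully that the relevant nodes together with the tree (maximality) axioms pin them down unambiguously. Making precise that the relevant-node placement is well defined from the reduced counts---that is, that at every stage the relevant column indeed still has a bottom-most non-forbidden relevant slot---requires re-running the invariant of \Cref{lem_down_flushing} while systematically ignoring non-relevant positions, and this is the technical heart of the argument.
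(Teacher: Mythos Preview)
Your proposal is correct and follows essentially the same route as the paper. The paper likewise adapts the down-flushing algorithm of \Cref{lem_down_flushing}, but organizes it as a single right-to-left pass: when processing column $\bar j_i$, it first inserts every non-relevant position of that column not forbidden by previously placed nodes---arguing that such a point is automatically compatible with \emph{all} nodes of $T$, past and future, since no point of $L_{\delta,\nu}$ lies strictly southwest of a leftmost-in-row point and every later node lies weakly northwest---and only then inserts the $\bar c_i+1$ relevant nodes at the bottom-most non-forbidden positions. Your two-pass variant (place all relevant nodes first, then recover the non-relevant ones by maximality) is equivalent, precisely because of your key observation that non-relevant nodes never forbid anything; the difficulty you flag in the last step is therefore dissolved by the same compatibility observation, and no additional invariant beyond that of \Cref{lem_down_flushing} is required.
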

		
		\begin{proof}
			We proceed in a similar way as in the proof of~\Cref{lem_down_flushing}, with the small difference that we need to be careful what to do with the non-relevant positions, which are not counted by the reduced column vector. 
			
			Let $\bar c_{\delta}(T)=(\bar c_0, \dots,\bar c_{m-1})$ be the reduced column vector of $T$. Similarly as before, the tree $T$ can be reconstructed by adding nodes from right to left, from bottom to top, avoiding the forbidden positions that are to the left of a node that is not the top most node of its column. 
			Here comes the tricky part. When we want to add the nodes in column $\bar j_i$, there are two possible scenarios: 
			
			(1) If there are non-relevant positions in column $\bar j_i$ that are not forbidden by any of the nodes added before in the process, then these non-relevant positions are automatically compatible with all the nodes of the tree $T$ (the ones that were already added, and all the future ones). Therefore, all the non-relevant nodes in column $\bar j_i$ that are not forbidden by any previously added node should be added to $T$. After this we proceed adding $\bar c_i+1$ nodes from bottom to top in the positions that are not forbidden in column $\bar j_i$.  
			
			(2) If all the non-relevant positions in column $\bar j_i$ are forbidden, then we just proceed adding $\bar c_i+1$ nodes from bottom to top in the positions that are not forbidden in that column. 
			
			This procedure reconstructs the tree $T$ and only depends on the reduced column vector. 
			
			An example is illustrated in~\Cref{fig_reduced_down_flushing}. Note that the unfilled green point $p_{14}$ is non-relevant, and was forced to be added to $T$ because it is not forbidden by any of the previously added nodes $p_1,\dots, p_{13}$. At this step of the process, one proceeds adding the 1+1 relevant points $p_{15},p_{16}$ in that column, which are counted by the corresponding entry plus one of the reduced column vector.   
		\end{proof}
		
		\begin{figure}[htb]
			\centering
			\def\svgwidth{0.55\textwidth}
			\import{Figures/}{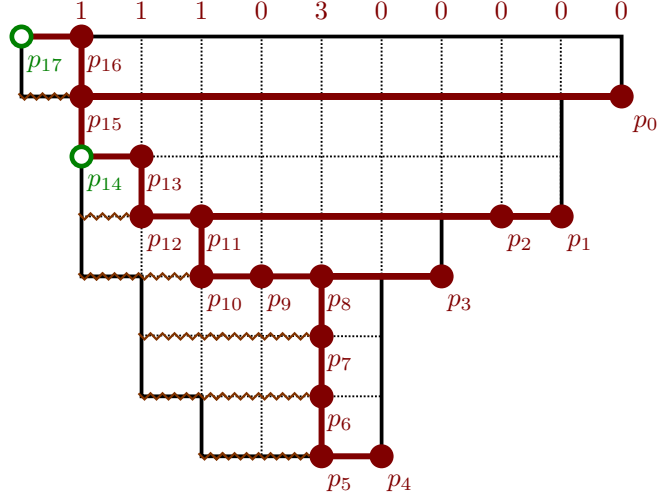}
			\caption{The reduced down flushing algorithm.}
			\label{fig_reduced_down_flushing}
		\end{figure}

		We call the algorithm described in the previous proof the \emph{reduced down flushing algorithm}. Its input is a valid reduced column vector $(\overline c_0,\dots,\overline c_{m-1})$ (or the number of relevant nodes in each column), and its output is the unique $(\delta,\nu)$-tree such that~$\overline c_\delta(T)=(\overline c_0,\dots,\overline c_{m-1})$.

		\begin{lemma}\label{lem_vertical_small_change_reduced}
			Let $\delta,\delta'$ be two increment vectors with respect to $\nu$, such that $\delta'$ is obtained by either adding or subtracting 1 to one of the entries of~$\delta$. 
			For every $(\delta,\nu)$-tree $T$, there is a unique $(\delta',\nu)$-tree~$T'$ such that $$\overline c_\delta(T)=\overline c_{\delta'}(T').$$   
			Moreover, the heights of the non-relevant nodes of $T$ and $T'$ coincide. 
		\end{lemma}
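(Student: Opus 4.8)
The argument follows the same template as the proof of \Cref{lem_vertical_small_change}, the only new feature being that we must now keep track of the relevant versus non-relevant positions and of the heights of the non-relevant nodes. Uniqueness is immediate: by \Cref{lem_reduced_down_flushing} a $(\delta',\nu)$-tree is reconstructed from its reduced column vector, so at most one $T'$ can satisfy $\overline c_\delta(T)=\overline c_{\delta'}(T')$. It therefore remains to construct such a $T'$ and to verify the additional statement on non-relevant heights. As before, it suffices to treat the case in which $\delta'$ is obtained from $\delta$ by subtracting $1$ from a nonzero entry $\delta_a$, the additive case being handled by the inverse construction.

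First I would record how the grid surgery from $L_{\delta,\nu}$ to $L_{\delta',\nu}$ changes the left boundary, and hence which positions are non-relevant. Since $\gamma_a=\nu_a-\delta_a$ increases by one, the boundary path $\hat\nu$ of \eqref{eq_nu_hat} gains one west step in the band of rows affected by the surgery, so the leftmost point of each row is displaced in a controlled way. I would make explicit the resulting height-preserving correspondence between the non-relevant positions of $L_{\delta,\nu}$ and those of $L_{\delta',\nu}$; this is the bookkeeping that replaces the purely column-based description used for \Cref{lem_vertical_small_change}.

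Then I would apply the very same node-moving transformation used in that proof (illustrated in \Cref{fig_preserve_column_one} and \Cref{fig_preserve_column_two}): along the band $A$ of rows met by the bold red path at heights $\ge a$, shift the nodes of columns $j_{i_2},\dots,j_{i_k}$ down by $c_{j_{i_1}}$ positions within $A$ and lift the nodes of column $j_{i_1}$ to the top rows of $A$, leaving all other nodes fixed. That this yields a genuine $(\delta',\nu)$-tree with $c_\delta(T)=c_{\delta'}(T')$ is exactly \Cref{lem_vertical_small_change}. What is new is to check, using the reduced down-flushing description of \Cref{lem_reduced_down_flushing}, that on the band $A$ this reshuffling carries non-relevant nodes to non-relevant nodes of the \emph{same} height and relevant nodes to relevant nodes, redistributing the latter among the reduced columns exactly as the full column vector is redistributed. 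Subtracting the (unchanged) non-relevant counts from the (preserved) full column counts then gives $\overline c_\delta(T)=\overline c_{\delta'}(T')$, and the height-preservation on non-relevant nodes is precisely the first assertion.

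The main obstacle is this last local analysis on $A$: the moved nodes genuinely change height, so one must verify that the boundary shift of the first step exactly compensates for these height changes at the level of the non-relevant skeleton, and that the relevance status of every node in $A$ is tracked correctly across the two distinct shapes $L_{\delta,\nu}$ and $L_{\delta',\nu}$. Once the row-by-row matching on $A$ is established — namely that each affected row retains a non-relevant node at its height while its relevant contents are merely permuted among the reduced columns according to the preserved column vector — both conclusions follow, and the reversibility of the construction noted in \Cref{lem_vertical_small_change} disposes of the additive case.
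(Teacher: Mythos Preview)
Your plan has a genuine gap: the transformation from \Cref{lem_vertical_small_change} does \emph{not} in general preserve the reduced column vector, so the tree it produces is not the $T'$ you are after, and the verification you describe cannot succeed. The paper's proof does not reuse that transformation; it builds a \emph{different} one, working with the reduced columns $\bar j_{i_1},\dots,\bar j_{i_k}$, shifting by the reduced count $\bar c_{j_{i_1}}$ rather than $c_{j_{i_1}}$, and --- crucially --- leaving all non-relevant nodes fixed in their (non-reduced) columns. The paper even flags this explicitly: the set of columns participating in the red path is different (its example has a column $j_{i_4}$ present in the full-column picture but absent from the reduced one), so the band $A$ and the shift amount both change.

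Here is a small counterexample to your proposed route. Take $\nu=ENEN=(1,1,0)$, $\delta=\delta^{\max}=(1,0)$, $\delta'=(0,0)$, and let $T$ be the $\nu$-tree corresponding to the path $NNEE$, namely $T=\{(1,0),(2,1),(1,2),(2,2),(0,2)\}$. One computes $c_\delta(T)=(1,1,0)$ and $\bar c_\delta(T)=(1,1)$, with a single non-relevant node at height $2$. The $(\delta',\nu)$-tree with full column vector $(1,1,0)$ --- the output of the \Cref{lem_vertical_small_change} transformation --- is $T'=\{(2,0),(2,1),(1,1),(0,1),(0,2)\}$, which has $\bar c_{\delta'}(T')=(0,1)\neq(1,1)$ and non-relevant nodes at heights $1$ and $2$. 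By contrast, the $(\delta',\nu)$-tree with reduced column vector $(1,1)$ is $T''=\{(2,0),(2,1),(1,1),(1,2),(0,2)\}$, whose unique non-relevant node is at height $2$, matching $T$. Thus $T'\neq T''$: reusing the full-column transformation gives the wrong tree, and your ``subtract the unchanged non-relevant counts'' step fails because the non-relevant contribution per column is \emph{not} unchanged. You need to redo the construction with reduced columns from the start, as the paper does.
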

		\begin{proof}
			Uniqueness follows by~\Cref{lem_reduced_down_flushing}, so we just need to prove existence. 
			
			Let $T$ be a $(\delta,\nu)$-tree with reduced column vector $\bar c_\delta(T)=(\bar c_0,\dots,\bar c_{m-1})$, and assume that $\delta'$ is obtained by subtracting 1 to a non-zero entry $\delta_a$ of~$\delta$. This operation produces a small transformation to the reduced columns of $L_{\delta,\nu}$ (which is sligthly different to the transformation in the proof on~\Cref{lem_vertical_small_change}). All the reduced columns of length larger than $n-a$ are moved one step to the right, while the subsequent reduced column (of length $n-a$) is moved one step to their left. All other columns stay the same.   
			An example is illustrated in~\Cref{fig_preserve_reduced_column_two}.
			Here, we have chosen the same example as in~\Cref{fig_preserve_column_two}, to highlight the differences with the transformation described in the proof of~\Cref{lem_vertical_small_change}.
			
			Consider the labeling $\bar j_0,\dots, \bar j_{m-1}$ of the reduced columns of $L_{\delta,\nu}$ (and also of the reduced columns of $L_{\delta',\nu}$) obtained by reading the reduced columns from shortest to longest, from right to left, as before. Assume that $\bar j_{i_1}$ is the label of the reduced column that was moved to the left under the small transformation that changes $\delta$ to $\delta'$. We also consider the columns $\bar j_{i_2},\dots,\bar j_{i_k}$, consisting of the reduced columns of $L_{\delta,\nu}$, from right to left, of length bigger than $n-a$ that contain at least one node of $T$ at height bigger than or equal to $a$. The restriction of the tree $T$ to the nodes at height bigger than or equal to $a$ in the reduced columns $\bar j_{i_1},\dots,\bar j_{i_k}$ is marked as a bold red path on the left of~\Cref{fig_preserve_reduced_column_one,fig_preserve_reduced_column_two}. 
			It is a subpath of the unique path of the tree from column $\bar j_{i_1}$ to the root of the tree.  
			
			Note that column $j_{i_4}$, of length bigger than $n-a$ in~\Cref{fig_preserve_column_two}, is now a reduced column of length~$n-a$. That is why there is no $\bar j_{i_4}$ in our example in~\Cref{fig_preserve_reduced_column_two}.    
			
			We will describe a small transformation to $T$ that produces a $(\delta',\nu)$-tree $T'$ with the same reduced column vector as $T$. The result of this is illustrated on the right of~\Cref{fig_preserve_reduced_column_one,fig_preserve_reduced_column_two}, and affects the red marked nodes of the tree.
            {The brown and green points between the columns $\bar j_{i_k}$ and $\bar j_{i_1}$ are also moved one step to the right.}. 
			
			Note that the columns $\bar j_{i_2},\dots,\bar j_{i_k}$ of $L_{\delta',\nu}$ are positioned one step to the right of columns $\bar j_{i_2},\dots,\bar j_{i_k}$ of $L_{\delta,\nu}$, while column $\bar j_1$ was moved to some position to the left, see~\Cref{fig_preserve_reduced_column_one,fig_preserve_reduced_column_two}.
			
			Let $A$ be the set of rows that contain at least one node of the marked bold red path of $T$. We apply the following transformation to $T$. For each node $T$ in a reduced column $\bar j_{i_b}$, for $2\leq b\leq k$, that belongs to $A$, we draw a node in $T'$ in the reduced column $\bar j_{i_b}$ but shifted down $\bar c_{j_{i_1}}$ positions withing~$A$. The $\bar c_{j_{i_1}}+1$ nodes in reduced column $\bar j_{i_1}$ are moved to the top rows of $A$. 
			All other relevant nodes of $T$ remain intact in their reduced columns, 
			and all non-relevant nodes remain intact in their ``not reduced'' columns. 
			A schematic illustration of this transformation is shown in~\Cref{fig_preserve_reduced_column_one}, and an explicit example in~\Cref{fig_preserve_reduced_column_two}.
			
			The result is a $(\delta',\nu)$-tree $T'$ with the same reduced column vector as $T$: $\bar c_\delta(T)=\bar c_{\delta'}(T')$, and such that the heights of the non-relevant nodes are preserved.
			The reason why this procedure works is guarantied by a direct analysis of the reduced down flushing algorithm. Moreover, we can also recover~$T$ from~$T'$ by a similar transformation in the reverse direction.
		\end{proof}
		
		\begin{figure}[htb]
			\centering
			\begin{center}
				\centering
				\def\svgwidth{.45\linewidth}
				%% Creator: Inkscape 1.2.2 (b0a8486541, 2022-12-01), www.inkscape.org
%% PDF/EPS/PS + LaTeX output extension by Johan Engelen, 2010
%% Accompanies image file 'small-transfo-reduced-columns-before.pdf' (pdf, eps, ps)
%%
%% To include the image in your LaTeX document, write
%%   \input{<filename>.pdf_tex}
%%  instead of
%%   \includegraphics{<filename>.pdf}
%% To scale the image, write
%%   \def\svgwidth{<desired width>}
%%   \input{<filename>.pdf_tex}
%%  instead of
%%   \includegraphics[width=<desired width>]{<filename>.pdf}
%%
%% Images with a different path to the parent latex file can
%% be accessed with the `import' package (which may need to be
%% installed) using
%%   \usepackage{import}
%% in the preamble, and then including the image with
%%   \import{<path to file>}{<filename>.pdf_tex}
%% Alternatively, one can specify
%%   \graphicspath{{<path to file>/}}
%% 
%% For more information, please see info/svg-inkscape on CTAN:
%%   http://tug.ctan.org/tex-archive/info/svg-inkscape
%%
\begingroup%
  \makeatletter%
  \providecommand\color[2][]{%
    \errmessage{(Inkscape) Color is used for the text in Inkscape, but the package 'color.sty' is not loaded}%
    \renewcommand\color[2][]{}%
  }%
  \providecommand\transparent[1]{%
    \errmessage{(Inkscape) Transparency is used (non-zero) for the text in Inkscape, but the package 'transparent.sty' is not loaded}%
    \renewcommand\transparent[1]{}%
  }%
  \providecommand\rotatebox[2]{#2}%
  \newcommand*\fsize{\dimexpr\f@size pt\relax}%
  \newcommand*\lineheight[1]{\fontsize{\fsize}{#1\fsize}\selectfont}%
  \ifx\svgwidth\undefined%
    \setlength{\unitlength}{127.5000036bp}%
    \ifx\svgscale\undefined%
      \relax%
    \else%
      \setlength{\unitlength}{\unitlength * \real{\svgscale}}%
    \fi%
  \else%
    \setlength{\unitlength}{\svgwidth}%
  \fi%
  \global\let\svgwidth\undefined%
  \global\let\svgscale\undefined%
  \makeatother%
  \begin{picture}(1,0.66653925)%
    \lineheight{1}%
    \setlength\tabcolsep{0pt}%
    \put(0,0){\includegraphics[width=\unitlength,page=1]{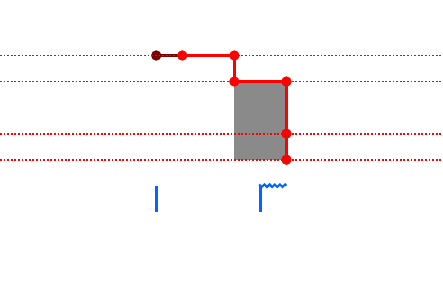}}%
    \put(0.71472989,0.23482331){\color[rgb]{0,0.4,1}\makebox(0,0)[lt]{\lineheight{1.25}\smash{\begin{tabular}[t]{l}$\delta_a \geq 1$\end{tabular}}}}%
    \put(0,0){\includegraphics[width=\unitlength,page=2]{small-transfo-reduced-columns-before.pdf}}%
    \put(0.6470589,0.62353136){\color[rgb]{1,0,0}\makebox(0,0)[t]{\lineheight{1.25}\smash{\begin{tabular}[t]{c}$\bar j_{i_1}$\end{tabular}}}}%
    \put(0.5294119,0.62353136){\color[rgb]{1,0,0}\makebox(0,0)[t]{\lineheight{1.25}\smash{\begin{tabular}[t]{c}$\bar j_{i_2}$\end{tabular}}}}%
    \put(0.41176473,0.62353136){\color[rgb]{1,0,0}\makebox(0,0)[t]{\lineheight{1.25}\smash{\begin{tabular}[t]{c}$\bar j_{i_k}$\end{tabular}}}}%
  \end{picture}%
\endgroup%

				\qquad
				\def\svgwidth{.45\linewidth}
				%% Creator: Inkscape 1.2.2 (b0a8486541, 2022-12-01), www.inkscape.org
%% PDF/EPS/PS + LaTeX output extension by Johan Engelen, 2010
%% Accompanies image file 'small-transfo-reduced-columns-after.pdf' (pdf, eps, ps)
%%
%% To include the image in your LaTeX document, write
%%   \input{<filename>.pdf_tex}
%%  instead of
%%   \includegraphics{<filename>.pdf}
%% To scale the image, write
%%   \def\svgwidth{<desired width>}
%%   \input{<filename>.pdf_tex}
%%  instead of
%%   \includegraphics[width=<desired width>]{<filename>.pdf}
%%
%% Images with a different path to the parent latex file can
%% be accessed with the `import' package (which may need to be
%% installed) using
%%   \usepackage{import}
%% in the preamble, and then including the image with
%%   \import{<path to file>}{<filename>.pdf_tex}
%% Alternatively, one can specify
%%   \graphicspath{{<path to file>/}}
%% 
%% For more information, please see info/svg-inkscape on CTAN:
%%   http://tug.ctan.org/tex-archive/info/svg-inkscape
%%
\begingroup%
  \makeatletter%
  \providecommand\color[2][]{%
    \errmessage{(Inkscape) Color is used for the text in Inkscape, but the package 'color.sty' is not loaded}%
    \renewcommand\color[2][]{}%
  }%
  \providecommand\transparent[1]{%
    \errmessage{(Inkscape) Transparency is used (non-zero) for the text in Inkscape, but the package 'transparent.sty' is not loaded}%
    \renewcommand\transparent[1]{}%
  }%
  \providecommand\rotatebox[2]{#2}%
  \newcommand*\fsize{\dimexpr\f@size pt\relax}%
  \newcommand*\lineheight[1]{\fontsize{\fsize}{#1\fsize}\selectfont}%
  \ifx\svgwidth\undefined%
    \setlength{\unitlength}{127.5000036bp}%
    \ifx\svgscale\undefined%
      \relax%
    \else%
      \setlength{\unitlength}{\unitlength * \real{\svgscale}}%
    \fi%
  \else%
    \setlength{\unitlength}{\svgwidth}%
  \fi%
  \global\let\svgwidth\undefined%
  \global\let\svgscale\undefined%
  \makeatother%
  \begin{picture}(1,0.66653913)%
    \lineheight{1}%
    \setlength\tabcolsep{0pt}%
    \put(0,0){\includegraphics[width=\unitlength,page=1]{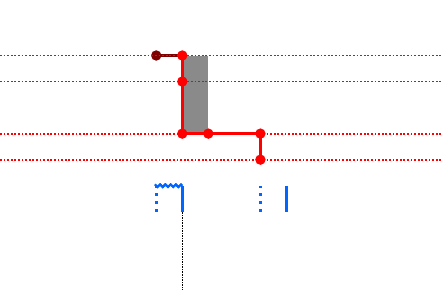}}%
    \put(0.71472972,0.23279972){\color[rgb]{0,0.4,1}\makebox(0,0)[lt]{\lineheight{1.25}\smash{\begin{tabular}[t]{l}$\delta_a-1$\end{tabular}}}}%
    \put(0,0){\includegraphics[width=\unitlength,page=2]{small-transfo-reduced-columns-after.pdf}}%
    \put(0.41176473,0.62353101){\color[rgb]{1,0,0}\makebox(0,0)[t]{\lineheight{1.25}\smash{\begin{tabular}[t]{c}$\bar j_{i_1}$\end{tabular}}}}%
    \put(0.58823549,0.62353101){\color[rgb]{1,0,0}\makebox(0,0)[t]{\lineheight{1.25}\smash{\begin{tabular}[t]{c}$\bar j_{i_2}$\end{tabular}}}}%
    \put(0.47058832,0.62353101){\color[rgb]{1,0,0}\makebox(0,0)[t]{\lineheight{1.25}\smash{\begin{tabular}[t]{c}$\bar j_{i_k}$\end{tabular}}}}%
    \put(0,0){\includegraphics[width=\unitlength,page=3]{small-transfo-reduced-columns-after.pdf}}%
  \end{picture}%
\endgroup%

			\end{center}
			\caption{Schematic illustration of the transformation in the proof of~\Cref{lem_vertical_small_change}.}
			\label{fig_preserve_reduced_column_one}
		\end{figure}
		
		\begin{figure}[htb]
			\centering
			\begin{center}
				\centering
				\def\svgwidth{.45\linewidth}
				%% Creator: Inkscape 1.2.2 (b0a8486541, 2022-12-01), www.inkscape.org
%% PDF/EPS/PS + LaTeX output extension by Johan Engelen, 2010
%% Accompanies image file 'transfo-reduced-columns-before.pdf' (pdf, eps, ps)
%%
%% To include the image in your LaTeX document, write
%%   \input{<filename>.pdf_tex}
%%  instead of
%%   \includegraphics{<filename>.pdf}
%% To scale the image, write
%%   \def\svgwidth{<desired width>}
%%   \input{<filename>.pdf_tex}
%%  instead of
%%   \includegraphics[width=<desired width>]{<filename>.pdf}
%%
%% Images with a different path to the parent latex file can
%% be accessed with the `import' package (which may need to be
%% installed) using
%%   \usepackage{import}
%% in the preamble, and then including the image with
%%   \import{<path to file>}{<filename>.pdf_tex}
%% Alternatively, one can specify
%%   \graphicspath{{<path to file>/}}
%% 
%% For more information, please see info/svg-inkscape on CTAN:
%%   http://tug.ctan.org/tex-archive/info/svg-inkscape
%%
\begingroup%
  \makeatletter%
  \providecommand\color[2][]{%
    \errmessage{(Inkscape) Color is used for the text in Inkscape, but the package 'color.sty' is not loaded}%
    \renewcommand\color[2][]{}%
  }%
  \providecommand\transparent[1]{%
    \errmessage{(Inkscape) Transparency is used (non-zero) for the text in Inkscape, but the package 'transparent.sty' is not loaded}%
    \renewcommand\transparent[1]{}%
  }%
  \providecommand\rotatebox[2]{#2}%
  \newcommand*\fsize{\dimexpr\f@size pt\relax}%
  \newcommand*\lineheight[1]{\fontsize{\fsize}{#1\fsize}\selectfont}%
  \ifx\svgwidth\undefined%
    \setlength{\unitlength}{127.5000036bp}%
    \ifx\svgscale\undefined%
      \relax%
    \else%
      \setlength{\unitlength}{\unitlength * \real{\svgscale}}%
    \fi%
  \else%
    \setlength{\unitlength}{\svgwidth}%
  \fi%
  \global\let\svgwidth\undefined%
  \global\let\svgscale\undefined%
  \makeatother%
  \begin{picture}(1,0.78418727)%
    \lineheight{1}%
    \setlength\tabcolsep{0pt}%
    \put(0,0){\includegraphics[width=\unitlength,page=1]{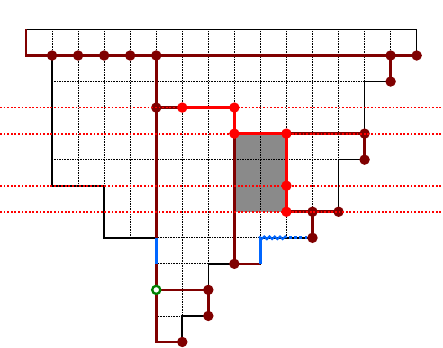}}%
    \put(0.80296484,0.23280007){\color[rgb]{0,0.4,1}\makebox(0,0)[lt]{\lineheight{1.25}\smash{\begin{tabular}[t]{l}$\delta_a=2$\end{tabular}}}}%
    \put(0,0){\includegraphics[width=\unitlength,page=2]{transfo-reduced-columns-before.pdf}}%
    \put(0.6470589,0.74117649){\color[rgb]{1,0,0}\makebox(0,0)[t]{\lineheight{1.25}\smash{\begin{tabular}[t]{c}$\bar j_{i_1}$\end{tabular}}}}%
    \put(0.5294119,0.74117649){\color[rgb]{1,0,0}\makebox(0,0)[t]{\lineheight{1.25}\smash{\begin{tabular}[t]{c}$\bar j_{i_2}$\end{tabular}}}}%
    \put(0.41176473,0.74117649){\color[rgb]{1,0,0}\makebox(0,0)[t]{\lineheight{1.25}\smash{\begin{tabular}[t]{c}$\bar j_{i_3}$\end{tabular}}}}%
    \put(0,0){\includegraphics[width=\unitlength,page=3]{transfo-reduced-columns-before.pdf}}%
  \end{picture}%
\endgroup%

				\qquad
				\def\svgwidth{.45\linewidth}
				%% Creator: Inkscape 1.2.2 (b0a8486541, 2022-12-01), www.inkscape.org
%% PDF/EPS/PS + LaTeX output extension by Johan Engelen, 2010
%% Accompanies image file 'transfo-reduced-columns-after.pdf' (pdf, eps, ps)
%%
%% To include the image in your LaTeX document, write
%%   \input{<filename>.pdf_tex}
%%  instead of
%%   \includegraphics{<filename>.pdf}
%% To scale the image, write
%%   \def\svgwidth{<desired width>}
%%   \input{<filename>.pdf_tex}
%%  instead of
%%   \includegraphics[width=<desired width>]{<filename>.pdf}
%%
%% Images with a different path to the parent latex file can
%% be accessed with the `import' package (which may need to be
%% installed) using
%%   \usepackage{import}
%% in the preamble, and then including the image with
%%   \import{<path to file>}{<filename>.pdf_tex}
%% Alternatively, one can specify
%%   \graphicspath{{<path to file>/}}
%% 
%% For more information, please see info/svg-inkscape on CTAN:
%%   http://tug.ctan.org/tex-archive/info/svg-inkscape
%%
\begingroup%
  \makeatletter%
  \providecommand\color[2][]{%
    \errmessage{(Inkscape) Color is used for the text in Inkscape, but the package 'color.sty' is not loaded}%
    \renewcommand\color[2][]{}%
  }%
  \providecommand\transparent[1]{%
    \errmessage{(Inkscape) Transparency is used (non-zero) for the text in Inkscape, but the package 'transparent.sty' is not loaded}%
    \renewcommand\transparent[1]{}%
  }%
  \providecommand\rotatebox[2]{#2}%
  \newcommand*\fsize{\dimexpr\f@size pt\relax}%
  \newcommand*\lineheight[1]{\fontsize{\fsize}{#1\fsize}\selectfont}%
  \ifx\svgwidth\undefined%
    \setlength{\unitlength}{127.5000036bp}%
    \ifx\svgscale\undefined%
      \relax%
    \else%
      \setlength{\unitlength}{\unitlength * \real{\svgscale}}%
    \fi%
  \else%
    \setlength{\unitlength}{\svgwidth}%
  \fi%
  \global\let\svgwidth\undefined%
  \global\let\svgscale\undefined%
  \makeatother%
  \begin{picture}(1,0.78663286)%
    \lineheight{1}%
    \setlength\tabcolsep{0pt}%
    \put(0,0){\includegraphics[width=\unitlength,page=1]{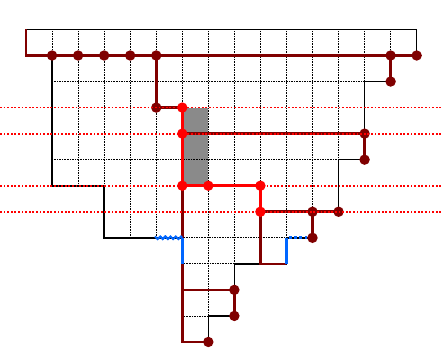}}%
    \put(0.80296484,0.23524566){\color[rgb]{0,0.4,1}\makebox(0,0)[lt]{\lineheight{1.25}\smash{\begin{tabular}[t]{l}$\delta_a=2-1=1$\end{tabular}}}}%
    \put(0,0){\includegraphics[width=\unitlength,page=2]{transfo-reduced-columns-after.pdf}}%
    \put(0.4117649,0.74362242){\color[rgb]{1,0,0}\makebox(0,0)[t]{\lineheight{1.25}\smash{\begin{tabular}[t]{c}$\bar j_{i_1}$\end{tabular}}}}%
    \put(0.58823566,0.74362242){\color[rgb]{1,0,0}\makebox(0,0)[t]{\lineheight{1.25}\smash{\begin{tabular}[t]{c}$\bar j_{i_2}$\end{tabular}}}}%
    \put(0.47058849,0.74362242){\color[rgb]{1,0,0}\makebox(0,0)[t]{\lineheight{1.25}\smash{\begin{tabular}[t]{c}$\bar j_{i_3}$\end{tabular}}}}%
    \put(0,0){\includegraphics[width=\unitlength,page=3]{transfo-reduced-columns-after.pdf}}%
  \end{picture}%
\endgroup%

			\end{center}
			\caption{Example of the transformation in the proof of~\Cref{lem_vertical_small_change}.}
			\label{fig_preserve_reduced_column_two}
		\end{figure}
		
		\begin{lemma}\label{lem_reduced_vertical_flushing}
			Let $\delta,\delta'$ be two increment vectors with respect to $\nu$.
			For every $(\delta,\nu)$-tree $T$, there is a unique $(\delta',\nu)$-tree $T'$ such that $$\overline c_\delta(T)=\overline c_{\delta'}(T').$$   
			Moreover, the heights of the non-relevant nodes of $T$ and $T'$ coincide. 
		\end{lemma}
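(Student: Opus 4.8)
The plan is to reduce the statement to the single-step case treated in \Cref{lem_vertical_small_change_reduced}, exactly mirroring the way \Cref{lem_vertical_flushing} was deduced from \Cref{lem_vertical_small_change}. The idea is to connect $\delta$ and $\delta'$ by a chain of increment vectors in which consecutive terms differ by adding or subtracting $1$ to a single entry, and then to compose the bijections provided by the small-change lemma along this chain.

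First I would construct the chain. Given two increment vectors $\delta=(\delta_1,\dots,\delta_n)$ and $\delta'=(\delta_1',\dots,\delta_n')$ with respect to $\nu$, I would interpolate coordinate by coordinate: process the entries $i=1,\dots,n$ in turn, and for each $i$ move the $i$-th coordinate one unit at a time from $\delta_i$ to $\delta_i'$, leaving the already-processed and not-yet-processed coordinates fixed. This produces a finite sequence $\delta=\delta^{(0)},\delta^{(1)},\dots,\delta^{(N)}=\delta'$ in which each $\delta^{(t+1)}$ is obtained from $\delta^{(t)}$ by adding or subtracting $1$ to a single entry. The one point to check is that every intermediate vector is again a valid increment vector, i.e.\ that each entry stays in the range $[0,\nu_i]$; this holds because the $i$-th entry only ever takes values between $\min(\delta_i,\delta_i')$ and $\max(\delta_i,\delta_i')$, and both endpoints already lie in $[0,\nu_i]$ by hypothesis.

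Next I would apply \Cref{lem_vertical_small_change_reduced} to each consecutive pair $\delta^{(t)},\delta^{(t+1)}$. Starting from $T$, this yields a sequence of trees $T=T^{(0)},T^{(1)},\dots,T^{(N)}=T'$, where $T^{(t)}$ is a $(\delta^{(t)},\nu)$-tree and each $T^{(t+1)}$ is the unique $(\delta^{(t+1)},\nu)$-tree with $\overline c_{\delta^{(t+1)}}(T^{(t+1)})=\overline c_{\delta^{(t)}}(T^{(t)})$. Composing these equalities gives $\overline c_{\delta'}(T')=\overline c_\delta(T)$, so $T'$ has the required reduced column vector. Since \Cref{lem_vertical_small_change_reduced} also guarantees that the heights of the non-relevant nodes are preserved at each single step, the heights of the non-relevant nodes of $T$ and $T'=T^{(N)}$ coincide by transitivity. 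Uniqueness of $T'$ is immediate from \Cref{lem_reduced_down_flushing}, which shows that a $(\delta',\nu)$-tree is determined by its reduced column vector.

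The argument is essentially bookkeeping, so there is no serious obstacle once \Cref{lem_vertical_small_change_reduced} is in place; the only point requiring (easy) care is ensuring that the interpolating chain never leaves the set of valid increment vectors, which is why I would interpolate one coordinate at a time rather than changing several coordinates simultaneously. One should also keep in mind that the reduced column vector $\overline c_{\delta^{(t)}}(T^{(t)})$ is measured against the ordering of reduced columns attached to $\delta^{(t)}$, which shifts slightly at each step; the content of \Cref{lem_vertical_small_change_reduced} is precisely that these differently-indexed vectors agree entrywise, so the telescoping chain of equalities above is meaningful.
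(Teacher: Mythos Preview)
Your proposal is correct and follows essentially the same approach as the paper: connect $\delta$ and $\delta'$ by a chain of increment vectors differing by $\pm 1$ in a single entry, then compose the bijections from \Cref{lem_vertical_small_change_reduced} along the chain. Your version is in fact more detailed than the paper's (which leaves the construction of the chain and the verification that intermediate vectors remain valid increment vectors implicit), and your explicit appeals to \Cref{lem_reduced_down_flushing} for uniqueness and to transitivity for the preservation of non-relevant node heights are appropriate.
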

		\begin{proof}
			Any two increment vectors with respect to $\nu$ can be connected by a sequence of increment vectors, such that each vector is obtained from the previous one by either adding or subtracting 1 to one of its entries. The result then follows by~\Cref{lem_vertical_small_change_reduced}.  
		\end{proof}
		
		\begin{proof}[Proof of \Cref{prop_reduced_column_vectors_characterization}]
			A $(\delta,\nu)$-tree $T$ is completely characterized by its reduced column vector by~\Cref{lem_reduced_down_flushing}. 
			Furthermore, the characterization of reduced column vectors of $(\delta,\nu)$-trees is independent of the choice of increment vector $\delta$, by~\Cref{lem_reduced_vertical_flushing}. 
			So, we just need to prove the two conditions of the proposition for one particular choice of $\delta$. We choose the extreme case $\delta=\delta^{\max}$, where $\delta_i=\nu_i$. In this case $(\delta,\nu)$-trees are just the classical $\nu$-trees.  
			
			The reduced column vector $(\overline c_0,\dots , \overline c_{m-1})$ of a $\nu$-tree $T$ is obtained from the row vector $(r_0,\dots,r_m)$ of the corresponding $\nurev$-tree $\reverse{T}$ by removing its last entry $r_m$. The two conditions of the proposition are then equivalent to the first two conditions of~\Cref{prop_row_vectors_characterization} (for the extreme maximal case $\delta$). %\clement{We can emphasize that the third condition was about the number of points in the top row, which are "non relevant" for the rows, and did not contribute to the number of left intervals either.}
            The third condition was about the number of points in the top row of $\reverse{T}$, which correspond to the non-relevant points in $T$.  
		\end{proof}
		
		\subsection{Reduced column vectors and right intervals}~
		We are finally ready to provide our characterization of right intervals in $\altTamTrees{\nu}{\delta}$ in terms of reduced column vectors. 
		
		Given a $(\delta,\nu)$-tree $T$, we say that an \emph{ordered} set 
		$L=\{p,q_0',q_1',\dots,q_\ell' \}\subseteq T$ is a \emph{vertical L} of $T$ if $L$ is the restriction of $T$ to a rectangle $R\subseteq F_{\delta,\nu}$ of the grid, such that $p$ is the top-left corner of $R$, and $q_0',q_1',\dots , q_\ell'$ appear in this order from top to bottom on the right side of $R$, with $q_0'$ being its top-right corner and  $q_\ell'$ its bottom-right corner. 
		Note that no other elements of $T$ belong to $R$. We say that the length of $L$ is equal to $\ell$. 
		We denote by $T-L$ the $(\delta,\nu)$-tree obtained from $T$ by rotating down the nodes $q_0',q_1',\dots ,q_{\ell-1}'$ in $T$ in this order. 
		
		Note that the condition $R\subseteq F_{\delta,\nu}$ is crucial here, to guaranty that the result after applying these rotations is still contained in the Ferrers diagram $F_{\delta,\nu}$, otherwise $T-L$ would not be a $(\delta,\nu)$-tree. 
		In particular, if $R\subseteq F_{\delta,\nu}$ then $q_0',q_1',\dots ,q_{\ell}'$ are all relevant nodes in $T$, and contribute to the reduced column vector.
        Vice versa, if $q_{\ell}'$ is relevant then $p \llcorner q'_\ell \in F_{\delta,\nu}$ because of the reduced down flushing algorithm, and thus $R\subseteq F_{\delta,\nu}$.
		
		An example of these concepts is illustrated in~\Cref{fig_vertical_L}. 
		
		\begin{figure}[htb]
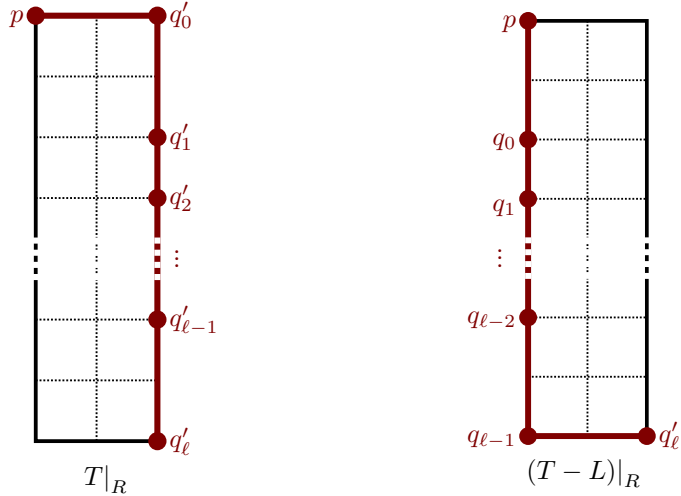

			\centering
			\begin{center}
				\centering
				\qquad	\begin{minipage}[c]{.4\linewidth}
					\centering
					\def\svgwidth{.42\linewidth}
					\import{Figures/}{vertical-L-top.pdf_tex}
					
					$\left. T \right|_R$
				\end{minipage}
				\begin{minipage}[c]{.4\linewidth}
					\centering
					\def\svgwidth{.42\linewidth}
					\import{Figures/}{vertical-L-bot.pdf_tex}
					
					$\left. (T-L) \right|_R$
				\end{minipage}\qquad
			\end{center}
			\caption{Schematic illustration of a vertical $L$ and the tree $T-L$.}
			\label{fig_vertical_L}
		\end{figure}
		
		\begin{lemma}\label{lem_vertical_L}
			Let $L$ be a vertical L of length $\ell$ of a $(\delta,\nu)$-tree $T$. Then, $[T-L,T]$ is a right interval of length $\ell$ in~$\altTamTrees{\nu}{\delta}$. 
			Moreover, every right interval of $\altTamTrees{\nu}{\delta}$ with top element $T$ is of this form.  
		\end{lemma}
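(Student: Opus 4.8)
The plan is to reduce the statement to the definition of a right interval in $\TamTrees{\check\nu}$, just as~\Cref{lem_horizontal_L} reduces the horizontal case to the definition of a left interval; the only genuinely new ingredient is the containment $R\subseteq F_{\delta,\nu}$. Recall that by~\Cref{lem_check_nu_tree} and~\Cref{prop_delta_nu_flushing} the poset $\altTamTrees{\nu}{\delta}$ is the restriction of $\TamTrees{\check\nu}$ to the $\check\nu$-trees lying in $L_{\delta,\nu}$, so a right interval of $\altTamTrees{\nu}{\delta}$ is exactly a right interval of $\TamTrees{\check\nu}$ all of whose trees are $(\delta,\nu)$-trees.

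For the forward direction, I would fix coordinates on $R$ so that $p$ is its top-left corner, $q_0'$ its top-right corner, and $q_1',\dots,q_\ell'$ the remaining right-column nodes read downward, with $q_\ell'$ the bottom-right corner. Rotating down $q_0'$ is the $\check\nu$-rotation sending $q_0'=p\urcorner q_1'$ to the lower-left corner $p\llcorner q_1'$ of $p\square q_1'$; since no node of $T$ besides $p,q_0',q_1'$ lies in that rectangle and $R\subseteq F_{\delta,\nu}$ guarantees $p\llcorner q_1'\in L_{\delta,\nu}$, the result is again a $(\delta,\nu)$-tree. Iterating with $q_1',\dots,q_{\ell-1}'$ produces a chain $T=T_0\gtrdot T_1\gtrdot\dots\gtrdot T_\ell=T-L$ of $(\delta,\nu)$-trees in which the created lower-left corners, together with $p$, occupy a single column of $T-L$. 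Read upward, this chain obtains $T$ from $T-L$ by $\ell$ rotations at the first $\ell$ nodes of that consecutive column (from bottom to top), which is precisely the definition of a right interval of length $\ell$; its linearity then follows from~\Cref{prop_left_right_vTam} (or~\Cref{prop_linear_intervals_altnu}).

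For the converse, given a right interval $[T',T]$ with top element $T$, I would use the defining column sequence $s_0,\dots,s_\ell$ of $T'$ (bottom to top) to reconstruct the rectangle: the lowest rotation at $s_0$ uses the node $s_1$ directly above it and a node $q_\ell'\in T'$ to its right in the same row, so $q_\ell'$ is the bottom-right corner, $p:=s_\ell$ is the top-left corner, and the images of $s_0,\dots,s_{\ell-1}$, together with $q_\ell'$, fill the right column of $R$ in $T$, giving the candidate vertical L. It remains to check $R\subseteq F_{\delta,\nu}$. Here the relevant-node theory enters: the lower-left corner $p\llcorner q_\ell'$ equals $s_0$, a node of the $(\delta,\nu)$-tree $T'$, hence a point of $L_{\delta,\nu}$ lying strictly to the left of $q_\ell'$ in the same row; therefore $q_\ell'$ is not the leftmost point of its row in $L_{\delta,\nu}$, i.e.\ $q_\ell'$ is relevant. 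By the remark preceding the lemma (a consequence of the reduced down flushing algorithm, cf.~\Cref{lem_reduced_down_flushing}), relevance of $q_\ell'$ forces $p\llcorner q_\ell'\in F_{\delta,\nu}$ and hence $R=p\square q_\ell'\subseteq F_{\delta,\nu}$, so $L$ is a genuine vertical L and $[T-L,T]=[T',T]$.

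The main obstacle is precisely this containment $R\subseteq F_{\delta,\nu}$, which has no analogue in~\Cref{lem_horizontal_L}: rotating down moves nodes toward the lower-left, i.e.\ toward the boundary $\hat\nu$, so unlike in the horizontal case the intermediate trees are not automatically contained in $L_{\delta,\nu}$. Making this precise relies on the equivalence between $R\subseteq F_{\delta,\nu}$ and the relevance of the bottom corner $q_\ell'$; once that equivalence is invoked, the rest of the argument is the transpose of the horizontal one.
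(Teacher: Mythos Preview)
Your proof is correct and follows essentially the same route as the paper, which simply records that the lemma ``follows by the definition of right intervals''; the paper has already isolated the only non-trivial point---the containment $R\subseteq F_{\delta,\nu}$---in the paragraph immediately preceding the lemma, and you invoke exactly that remark. One small simplification: once you observe that $s_0=p\llcorner q_\ell'$ is a node of the $(\delta,\nu)$-tree $T'$ and hence lies in $L_{\delta,\nu}$, the containment $R\subseteq F_{\delta,\nu}$ already follows directly from the staircase shapes of $\hat\nu$ and $\check\nu$, so the detour through relevance of $q_\ell'$ is not strictly needed (though it does mirror the paper's phrasing).
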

		
		\begin{proof}
			This follows by the definition of right intervals. 
		\end{proof}

		\begin{proposition}\label{prop_right_interval_counting}
			Let $T$ be a $(\delta,\nu)$-tree with reduced column vector $\overline c_\delta(T)=(\overline c_0,\dots,\overline c_{m-1})$. 
			The number of right intervals of length $\ell$
			with top element~$T$ in $\altTamTrees{\nu}{\delta}$ is equal to  
			\[
			|\{0\leq i \leq m-1:\ \overline c_i\geq \ell\}|.
			\]
		\end{proposition}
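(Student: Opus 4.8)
The plan is to mirror the proof of~\Cref{prop_left_interval_counting} for left intervals, using the column picture in place of the row picture. By~\Cref{lem_vertical_L}, the right intervals of length $\ell$ with top element $T$ are exactly the pairs $[T-L,T]$ where $L$ is a vertical L of length $\ell$ of $T$. Hence it suffices to set up a bijection between the vertical L's of length $\ell$ of $T$ and the reduced columns $\overline j_i$ (with $0\leq i\leq m-1$) satisfying $\overline c_i\geq \ell$, after which the claimed count is immediate.

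First I would construct the map from such reduced columns to vertical L's. Given a reduced column $\overline j_i$ with $\overline c_i\geq \ell$, I take $q_0',q_1',\dots,q_\ell'$ to be the $\ell+1$ topmost nodes of $T$ in $\overline j_i$ (these exist since $\overline j_i$ carries $\overline c_i+1\geq \ell+1$ relevant nodes), and let $p$ be the parent of $q_0'$ in $T$. The point to check is that this data is a genuine vertical L: that $p$ lies to the left of $q_0'$ in the same row, that the rectangle $R$ spanned by $p$ and $q_\ell'$ satisfies $R\subseteq F_{\delta,\nu}$, and that the restriction of $T$ to $R$ is exactly $\{p,q_0',\dots,q_\ell'\}$. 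For the converse map, given a vertical L with corner $p$ and right side $q_0',\dots,q_\ell'$, the nodes $q_0',\dots,q_\ell'$ lie in a common column and are relevant (since $R\subseteq F_{\delta,\nu}$), so they sit in a reduced column $\overline j_i$; I would argue that $q_0'$ is forced to be its topmost node, whence $\overline c_i\geq \ell$ and the two maps are mutually inverse. Finally, the index range is all of $0\leq i\leq m-1$ because the root of $T$ lies in the leftmost (non-reduced) column, so every reduced column has a topmost node admitting a left parent $p$.

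The crux of the argument is the structural analysis of the columns of $F_{\delta,\nu}$, and this is where I expect the real work to lie. Two facts must be pinned down. First, in any column the relevant lattice points form the top portion and the non-relevant ones the bottom portion: this follows from the left boundary of $F_{\delta,\nu}$ being the path $\hat\nu$, whose leftmost column is non-increasing in the height, so that the relevant (non-boundary) points of a fixed column are exactly those lying strictly above the boundary. Consequently the relevant nodes of $T$ in a column are precisely its topmost nodes, which makes the equivalence ``$q_\ell'$ relevant $\iff \overline c_i\geq \ell$'' precise and matches the reduced down flushing algorithm of~\Cref{lem_reduced_down_flushing}. Second, the topmost node of a column has its parent to its left: since the same-column nodes of $T$ are pairwise $\check\nu$-compatible they form a vertical chain in which every non-top node has the node directly above it as parent, so only the topmost node can have a horizontal parent $p$; this both guarantees that the corner $p$ exists in the forward map and forces $q_0'$ to be the topmost node in the converse direction. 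Once these two structural facts are in place, the bijection and the stated formula follow exactly as in the left-interval case.
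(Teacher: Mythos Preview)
Your proposal is correct and follows essentially the same approach as the paper's proof, which also invokes \Cref{lem_vertical_L} and then observes that the vertical~L's of length~$\ell$ correspond one-to-one with reduced columns~$\overline j_i$ satisfying $\overline c_i\geq\ell$ by taking the $\ell+1$ topmost nodes together with the parent of the top one. You simply spell out in more detail the structural facts (relevant points sit atop non-relevant ones in each column, the topmost node of a reduced column has a left parent, and $R\subseteq F_{\delta,\nu}$ when $q_\ell'$ is relevant) that the paper either states just before \Cref{lem_vertical_L} or leaves implicit.
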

		\begin{proof}
			By~\Cref{lem_vertical_L}, the right intervals of length $\ell$ with top element $T$ are of the form $[T-L,T]$ where $L$ is a vertical L of length $\ell$ of $T$.
			There is one such $L$ for each $\overline c_i\geq \ell$ with $0\leq i\leq m-1$, where $q_0',\dots,q_\ell'$ are the $\ell+1$ top most nodes of $T$ at column $\overline j_i$ and $p$ is the parent of $q_0$ in $T$.  
		\end{proof}
		
		\section{Bijections between linear intervals}\label{sec_bijections_linearintervals}
		Using the tools developed in the previous section, we are now ready to prove one of our main results.
		
		\begin{theorem}\label{thm_linear_intervals}
			For a fixed path $\nu$, all alt $\nu$-Tamari lattices $\altTam{\nu}{\delta}$ have the same number of linear intervals of length $\ell$.
		\end{theorem}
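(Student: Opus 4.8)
The plan is to split the count of non-trivial linear intervals according to the dichotomy of \Cref{prop_linear_intervals_altnu}: every non-trivial linear interval of $\altTamTrees{\nu}{\delta}$ is either a left interval or a right interval. Writing $L_\ell(\delta)$ and $R_\ell(\delta)$ for the number of left, respectively right, intervals of length $\ell$, I would first establish that each of these two quantities is independent of $\delta$, and then assemble the total count from them while keeping track of the (length-dependent) overlap between the two families.

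For the left intervals, \Cref{prop_left_interval_counting} gives, for each $(\delta,\nu)$-tree $T$ with row vector $r(T)=(r_0,\dots,r_n)$, that the number of left intervals of length $\ell$ with bottom element $T$ equals $|\{0\le i\le n-1:\ r_i\ge \ell\}|$. Since each left interval has a unique bottom element, summing over all $(\delta,\nu)$-trees and using that $T\mapsto r(T)$ is a bijection onto the set of valid row vectors (\Cref{prop_row_vectors_characterization}) gives
$$
L_\ell(\delta)=\sum_{r}|\{0\le i\le n-1:\ r_i\ge \ell\}|,
$$
where the sum runs over all vectors $r=(r_0,\dots,r_n)$ satisfying conditions \eqref{prop_row_vectors_characterization_item1}--\eqref{prop_row_vectors_characterization_item3} of \Cref{prop_row_vectors_characterization}. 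As those conditions involve only $\nu$ and not $\delta$, the summation set is the same for every increment vector, so $L_\ell(\delta)$ is independent of $\delta$. The identical argument, using \Cref{prop_right_interval_counting} together with the bijection $T\mapsto\overline c_\delta(T)$ onto valid reduced column vectors (\Cref{prop_reduced_column_vectors_characterization}), yields
$$
R_\ell(\delta)=\sum_{\overline c}|\{0\le i\le m-1:\ \overline c_i\ge \ell\}|,
$$
the sum ranging over all reduced column vectors satisfying the two $\delta$-free conditions of \Cref{prop_reduced_column_vectors_characterization}; hence $R_\ell(\delta)$ is likewise independent of $\delta$.

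It remains to assemble these into $\Lambda_\ell(\delta)$, the number of linear intervals of length $\ell$. For $\ell=0$ the count is simply the number of $(\delta,\nu)$-trees, equal to the number of valid row vectors, which is $\delta$-independent. For $\ell\ge 2$ the families of left and right intervals are disjoint: the classification of length-$2$ chains in the proof of \Cref{prop_left_right_vTam} shows that a length-$2$ linear interval is left or right but not both (its second rotation lies either immediately to the right in the same row, or immediately above in the same column, and these are mutually exclusive), and the inductive step there forces a longer linear interval to inherit a single type. Hence $\Lambda_\ell(\delta)=L_\ell(\delta)+R_\ell(\delta)$ for $\ell\ge 2$, which is $\delta$-independent. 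The subtle case is $\ell=1$, where by \Cref{prop_left_right_vTam} every cover relation is simultaneously a left and a right interval: here \Cref{lem_horizontal_L} identifies the left intervals of length $1$ with exactly the set of all edges (each edge being the unique bottom of one such interval), so $\Lambda_1(\delta)=L_1(\delta)=R_1(\delta)$, again $\delta$-independent.

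The main obstacle I anticipate is precisely this bookkeeping of the complete overlap at $\ell=1$ versus the disjointness at $\ell\ge2$; the genuine mathematical content---that the sets of admissible row vectors and of admissible reduced column vectors do not depend on $\delta$---is already packaged in \Cref{prop_row_vectors_characterization,prop_reduced_column_vectors_characterization}, so once the interval counts are expressed purely through these vectors the conclusion is immediate.
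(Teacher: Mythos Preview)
Your proposal is correct and follows the same underlying strategy as the paper: reduce to counting left and right intervals separately via \Cref{prop_linear_intervals_altnu}, and then observe that these counts depend only on the row vector (\Cref{prop_left_interval_counting}) and the reduced column vector (\Cref{prop_right_interval_counting}), whose admissible sets are $\delta$-free by \Cref{prop_row_vectors_characterization,prop_reduced_column_vectors_characterization}.

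The one genuine difference is packaging. The paper passes through the explicit horizontal and vertical flushing bijections of \Cref{sec_bijections_linearintervals} (\Cref{prop_leftintervals,prop_rightintervals}) to obtain length-preserving bijections between the left (resp.\ right) intervals of $\altTamTrees{\nu}{\delta}$ and $\altTamTrees{\nu}{\delta'}$, and then invokes \Cref{cor_leftintervals,cor_rightintervals}. You instead sum the per-tree counts directly over the $\delta$-independent set of admissible vectors, which is shorter and avoids the flushing maps entirely; what the paper's route buys in exchange is an explicit interval-to-interval bijection rather than a mere equinumerosity. Your careful case split at $\ell=0$ (elements), $\ell=1$ (complete overlap: every cover is simultaneously left and right), and $\ell\ge 2$ (disjointness, read off from the length-$2$ case analysis in the proof of \Cref{prop_left_right_vTam}) is in fact more explicit than the paper's one-line derivation, which leaves this bookkeeping implicit.
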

		
		This is a direct consequence of~\Cref{prop_linear_intervals_altnu} and~\Cref{cor_leftintervals,cor_rightintervals}, which show that the number of left intervals and the number of right intervals of length $\ell$ are preserved for any choice of $\delta$. Indeed, we prove more refined versions of these results in~\Cref{prop_leftintervals,prop_rightintervals}.
		
		\begin{remark}\label{rem:wrongcase} 
			If we chose any other path $\check{\nu}$ weakly below $\nu$ that does not satisfy $\check{\nu}_i \leq \nu_i$, for all~$i > 0$, then the restriction of $\Tam{\check{\nu}}$ to the subset of $\nu$-paths does not satisfy the enumerative result of \Cref{thm_linear_intervals}.
			
			More precisely, this poset still has the same number of left intervals (the left flushing argument presented afterwards still works) as all alt $\nu$-Tamari lattices. But based on computational experiments, it seems to have fewer right intervals. For instance, for $\nu = (1,2,0)$ and $\check{\nu} = (1,2,0)$, the distribution of linear intervals in the resulting poset is $(5, 5, 1)$ but the distribution of linear intervals in $\Tam{\nu}$ is~$(5, 5, 2)$.
		\end{remark}
	
	\begin{remark} 
		\Cref{thm_linear_intervals} generalizes the results obtained in~\cite{cheneviere_linear_2022} for the staircase $\nu=(NE)^n$.
		However, in this more general case, we usually do not have a closed formula counting the linear intervals of length $\ell$ similar to the one presented in~\cite{cheneviere_linear_2022}.
		
		In the $m$-Tamari lattice, where $\nu=(NE^m)^n$, one can adapt the decomposition given in~\cite{cheneviere_linear_2022} in order to find a closed formula for the number right intervals of length $\ell$: 
  $$\displaystyle m \binom{mn+n-\ell}{n-\ell-1}.$$ %\clementm{The computation is about $1$ page long and is probably not worth being included?}
		We were not able to find a nice formula for the number of left intervals in this case.
		For $n=5$ and $m=2$, the distribution of left intervals in this lattice is $(728, 442, 222, 112, 47, 18, 5, 1)$. Since $47$ is a prime number, no such a nice product formula seems to exist.
	\end{remark}
		
		\subsection{The horizontal flushing and left intervals}
		We define the \emph{horizontal flushing} $\hflushing{\delta}{\delta'}$ as the map between the set of $(\delta,\nu)$-trees and the set of $(\delta',\nu)$-trees characterized by the property 
		\[
		\hflushing{\delta}{\delta'}(T) = T'
		\quad
		\longleftrightarrow
		\quad
		r(T)=r(T').
		\]
		That is, the map that preserves the row vector of the tree.
		This map is uniquely determined by this property, and can be computed as the composition 
		\[
		\hflushing{\delta}{\delta'}(T)=
		\flush_{\delta',\nu} \circ \flush_{\delta,\nu}^{-1},
		\]
		which sends a $(\delta,\nu)$-tree to the unique $\nu$-path with the same row vector, and then to the corresponding~$(\delta',\nu)$-tree. 
		In particular, $\hflushing{\delta}{\delta'}$ is a bijection, and can be described using a \emph{horizontal flushing algorithm}: 
		
		If $r(T)=(r_0,\dots,r_n)$, then $T'$ can be reconstructed by adding $r_i+1$ nodes, from bottom to top, from right to left, avoiding the forbidden positions that are above the nodes that are not the left most nodes in their row. 
		
		This gives a natural correspondence between the horizontal L's of $T$ and the horizontal L's of $T'$: an L of length $\ell$ in row $i$\footnote{here we mean that the bottom part of the L is in row $i$} of $T$ corresponds to the unique horizontal L of the same length in row $i$ of $T'$. By abuse of notation, we denote by $\hflushing{\delta}{\delta'}(L)=L'$ the horizontal~L of $T'$ associated to $L$, a horizontal~L of $T$.
		
		\begin{proposition}\label{prop_leftintervals}
			Let $T$ be a $(\delta,\nu)$-tree and $T'=\hflushing{\delta}{\delta'}(T)$ be its corresponding $(\delta',\nu)$-tree. 
			We also denote by $L'=\hflushing{\delta}{\delta'}(L)$ the horizontal L of $T'$ associated to $L$, a horizontal L of $T$. 
			\begin{enumerate}
				\item The number of left intervals of length $\ell$ in $\altTamTrees{\nu}{\delta}$ with bottom element $T$ is equal to the number of left intervals of length $\ell$ in $\altTamTrees{\nu}{\delta'}$ with bottom element $T'$.
				\item The map  
				\[
				[T,T+L] \rightarrow [T',T'+L']
				\]
				is a bijection between the left intervals of $\altTamTrees{\nu}{\delta}$ and the left intervals of $\altTamTrees{\nu}{\delta'}$.
			\end{enumerate}
		\end{proposition}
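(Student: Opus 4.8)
The plan is to derive both statements from the horizontal-L description of left intervals, exploiting the single essential property of the horizontal flushing $\hflushing{\delta}{\delta'}$: that it preserves the row vector.

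Statement (1) is then almost immediate. By~\Cref{prop_left_interval_counting}, the number of left intervals of length $\ell$ with bottom element $T$ equals $|\{0\le i\le n-1:\ r_i\ge \ell\}|$, a quantity depending only on the row vector $r(T)=(r_0,\dots,r_n)$. Since $T'=\hflushing{\delta}{\delta'}(T)$ is defined precisely by the condition $r(T)=r(T')$, the same count applies to $T'$, and the two numbers coincide.

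For statement (2) I would first repackage left intervals as combinatorial data. By~\Cref{lem_horizontal_L}, every left interval of $\altTamTrees{\nu}{\delta}$ is uniquely of the form $[T,T+L]$ with $T$ a $(\delta,\nu)$-tree and $L$ a horizontal L of $T$; so left intervals are in bijection with pairs $(T,L)$. The proof of~\Cref{prop_left_interval_counting} moreover shows that the horizontal L's of $T$ are indexed by the pairs $(i,\ell)$ with $0\le i\le n-1$, $\ell\ge 1$, and $r_i\ge \ell$: for each such pair there is exactly one horizontal L, with bottom nodes the $\ell+1$ leftmost nodes of $T$ in row $i$ and corner the parent of the leftmost one. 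Because $r(T)=r(T')$, the horizontal L's of $T'$ are indexed by the \emph{same} set of pairs, and the correspondence $L\mapsto L'=\hflushing{\delta}{\delta'}(L)$ is precisely the identity on these indices $(i,\ell)$. Hence $L\mapsto L'$ is a bijection from the horizontal L's of $T$ onto those of $T'$.

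Putting these together, the assignment $[T,T+L]\mapsto[T',T'+L']$ factors as the bijection $T\mapsto T'=\hflushing{\delta}{\delta'}(T)$ on trees, followed by the index-preserving bijection $L\mapsto L'$ on horizontal L's; its two-sided inverse is obtained by running $\hflushing{\delta'}{\delta}$ together with the reverse correspondence on L's. That the target $[T',T'+L']$ is genuinely a left interval of $\altTamTrees{\nu}{\delta'}$ is guaranteed by~\Cref{lem_horizontal_L} applied to $T'$. I do not expect a serious obstacle: the substantive work lies in the existence and bijectivity of $\hflushing{\delta}{\delta'}$ and in~\Cref{prop_left_interval_counting}, both already available. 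The only point deserving care is confirming that the \emph{leftmost-nodes} horizontal L's enumerated in~\Cref{prop_left_interval_counting} exhaust \emph{all} horizontal L's of a tree, so that the correspondence $L\mapsto L'$ is genuinely a bijection rather than merely well-defined in one direction.
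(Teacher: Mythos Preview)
Your proposal is correct and follows essentially the same approach as the paper's proof, which also derives (1) directly from~\Cref{prop_left_interval_counting} and (2) from the horizontal-L characterization in~\Cref{lem_horizontal_L}. The concern you raise at the end is legitimate but already settled: the proof of~\Cref{prop_left_interval_counting} asserts there is \emph{exactly one} horizontal~L for each pair $(i,\ell)$ with $r_i\ge\ell$, and indeed the bottom-left node $q_0$ of any horizontal~L must be the leftmost node of $T$ in its row, since otherwise a node strictly left of $q_0$ in that row would be $\check\nu$-incompatible with the required node $p$ above $q_0$.
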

		
		\begin{proof}
			By~\Cref{prop_left_interval_counting}, the number of left intervals with bottom element $T$ depends only of the row vector $r(T)$. Since the $r(T)=r(T')$, then Item (1) follows. Item (2) is straight forward from the characterization of left intervals in~\Cref{lem_horizontal_L}.
		\end{proof}
		
		An example of the bijection between left intervals is illustrated in~\Cref{fig:BijectionLeftIntervals}. The maximal horizontal~L's are marked red for easier visualization. 
		
%		\cesar{Clement: please make the following modification to~\Cref{fig:BijectionLeftIntervals}. Remove the labels of the columns. Replace the red labels counting the number of nodes in the rows by the number of edges in the row (row vector). That is replace each red label $a$ in a row by $a-1$.}
		
		\begin{figure}[h]
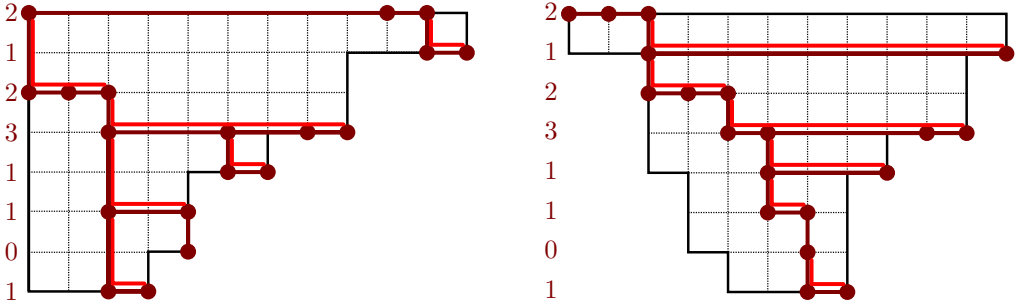

			\begin{center}
				\centering
				\def\svgwidth{0.4\textwidth}
				\import{Figures/}{left-interval-nu.pdf_tex}
				\qquad
				\def\svgwidth{0.4\textwidth}
				\import{Figures/}{left-interval-delta.pdf_tex}
				\caption{Bijection between left intervals for %$\nu = (3,1,0,2,2,0,3,0)$ and
					$\delta^{max}=(1,0,2,2,0,3,0)$ and $\delta = (0,0,1,2,0,1,0) $.
					Both trees have row vector $(1,0,1,1,3,2,1,2)$, whose entries plus one count the number of nodes in each of the rows.}
				\label{fig:BijectionLeftIntervals}
			\end{center}
		\end{figure}
		
		\begin{corollary}\label{cor_leftintervals}
			% For any increment vectors $\delta,\delta'$ with respect to $\nu$,
			The number of left intervals  of length $\ell$ in~$\altTamTrees{\nu}{\delta}$ and $\altTamTrees{\nu}{\delta'}$ are the same.
		\end{corollary}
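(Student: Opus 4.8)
The plan is to derive this global count directly from~\Cref{prop_leftintervals}, which already does all the work at the level of individual trees; the corollary is just the passage from that refined, tree-by-tree statement to the aggregate count. I would take the route through the explicit bijection in Item~(2). That proposition provides the map $[T,T+L]\mapsto[T',T'+L']$, where $T'=\hflushing{\delta}{\delta'}(T)$ and $L'=\hflushing{\delta}{\delta'}(L)$, and asserts it is a bijection between the left intervals of $\altTamTrees{\nu}{\delta}$ and those of $\altTamTrees{\nu}{\delta'}$. The one extra observation I need is that this bijection preserves length: by the description of the horizontal flushing algorithm recalled just before the proposition, a horizontal L of length $\ell$ sitting in row $i$ of $T$ is sent to a horizontal L of the same length $\ell$ in row $i$ of $T'$. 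Since by~\Cref{lem_horizontal_L} the length of the interval $[T,T+L]$ equals the length of $L$, the bijection restricts to a length-preserving bijection, hence to a bijection between left intervals of length $\ell$ on each side, which is exactly the claimed equality.

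Alternatively, I would sum the per-tree count from Item~(1) over all bottom elements. Since $\hflushing{\delta}{\delta'}=\flush_{\delta',\nu}\circ\flush_{\delta,\nu}^{-1}$ is itself a bijection between $(\delta,\nu)$-trees and $(\delta',\nu)$-trees, pairing each $T$ with $T'=\hflushing{\delta}{\delta'}(T)$ and invoking Item~(1) term by term gives that the two totals agree. I do not anticipate any genuine obstacle here, as the substantive content was already proved in~\Cref{prop_leftintervals}; the only point requiring a moment's care is the length-preservation of the correspondence $L\mapsto L'$, and that is immediate from the fact that $\hflushing{\delta}{\delta'}$ fixes the row vector and matches horizontal L's row by row.
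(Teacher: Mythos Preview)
Your proposal is correct and follows the same approach as the paper, which simply states that the corollary is a direct consequence of \Cref{prop_leftintervals}. You have merely spelled out in more detail why that proposition suffices (via either the length-preserving bijection of Item~(2) or by summing Item~(1) over the bijection $\hflushing{\delta}{\delta'}$), but the underlying argument is identical.
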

		\begin{proof}
			This is a direct consequence of~\Cref{prop_leftintervals}.
		\end{proof}

		\subsection{The reduced vertical flushing and right intervals}
		
		We define the \emph{vertical flushing} $\vflushing{\delta}{\delta'}$ as the map between the set of $(\delta,\nu)$-trees and the set of $(\delta',\nu)$-trees characterized by the property 
		\[
		\vflushing{\delta}{\delta'}(T) = T'
		\quad
		\longleftrightarrow
		\quad
		\bar c_{\delta}(T)=\bar c_{\delta'}(T').
		\]
		That is, the map that preserves the reduced column vector of the tree.

		This map is uniquely determined by this property by~\Cref{lem_reduced_vertical_flushing}. 
		In particular, $\vflushing{\delta}{\delta'}$ is a bijection, and can be described using a \emph{vertical flushing algorithm}: 
		
		If $\bar c_{\delta}(T)=(\bar c_0,\dots,\bar c_{m-1})$, then $T'$ can be reconstructed by adding nodes, from right to left, from bottom to top, avoiding the forbidden positions that are to the left of the nodes that are not the top most nodes in their column. 
		The difference here is that the number of nodes that we add to a column, whose reduced column is labeled $\bar j_i$, is not necessarily equal to $\bar c_{j_i}+1$: 
		we first add all the non-relevant nodes that are not forbidden by any of the previously added nodes; then we continue adding $\bar c_{j_i}+1$ relevant nodes from bottom to top in the non-forbidden available positions.
		
		This also gives a natural correspondence between the vertical L's of $T$ and the vertical L's of $T'$: an L of length $\ell$ in reduced column $\bar j_i$\footnote{here we mean that the right part of the L is in reduced column $\bar j_i$} of $T$ corresponds to the unique vertical L of the same length in reduced column $\bar j_i$ of $T'$. By abuse of notation, we denote by $\vflushing{\delta}{\delta'}(L)=L'$ the vertical L of $T'$ associated to $L$, a vertical L of $T$.
		
		\begin{proposition}\label{prop_rightintervals}
			Let $T$ be a $(\delta,\nu)$-tree and $T'=\vflushing{\delta}{\delta'}(T)$ be its corresponding $(\delta',\nu)$-tree. 
			We also denote by $L'=\vflushing{\delta}{\delta'}(L)$ the vertical L of $T'$ associated to $L$, a vertical L of $T$. 
			\begin{enumerate}
				\item The number of right intervals of length $\ell$ in $\altTamTrees{\nu}{\delta}$ with top element $T$ is equal to the number of right intervals of length $\ell$ in $\altTamTrees{\nu}{\delta'}$ with top element $T'$.
				\item The map  
				\[
				[T,T-L] \rightarrow [T',T'-L']
				\]
				is a bijection between the right intervals of $\altTamTrees{\nu}{\delta}$ and the right intervals of $\altTamTrees{\nu}{\delta'}$.
			\end{enumerate}
		\end{proposition}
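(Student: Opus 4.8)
The plan is to mirror, step by step, the proof of~\Cref{prop_leftintervals} for left intervals, simply replacing the row-vector machinery by its reduced-column-vector analogue. All the ingredients are already available: the enumeration of right intervals by~\Cref{prop_right_interval_counting}, the vertical-L characterization of~\Cref{lem_vertical_L}, and the fact that $\vflushing{\delta}{\delta'}$ is by definition the (unique, by~\Cref{lem_reduced_vertical_flushing}) bijection preserving the reduced column vector.

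For Item~(1), I would invoke~\Cref{prop_right_interval_counting}, which states that the number of right intervals of length $\ell$ with top element $T$ equals $|\{0\leq i\leq m-1:\ \overline c_i\geq \ell\}|$, a quantity depending only on the reduced column vector $\overline c_\delta(T)$. Since $T'=\vflushing{\delta}{\delta'}(T)$ is characterized precisely by $\overline c_\delta(T)=\overline c_{\delta'}(T')$, these two counts coincide for every $\ell$, which is exactly Item~(1).

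For Item~(2), I would use~\Cref{lem_vertical_L} to identify each right interval with top element $T$ (resp.\ $T'$) with a unique vertical L of $T$ (resp.\ $T'$), the interval being $[T-L,T]$ (resp.\ $[T'-L',T']$). It then remains to check that the assignment $L\mapsto L'=\vflushing{\delta}{\delta'}(L)$ is a bijection between the vertical L's of $T$ and those of $T'$. Concretely, a vertical L of length $\ell$ whose right side lies in reduced column $\bar j_i$ of $T$ is sent to the unique vertical L of the same length in reduced column $\bar j_i$ of $T'$; since the vertical flushing preserves the number of relevant nodes in each reduced column, this correspondence is well defined and invertible. Composing with the two identifications yields the claimed bijection on right intervals.

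The main obstacle is precisely verifying that $L\mapsto L'$ is genuinely well defined and bijective, i.e.\ that a vertical L of a given length in a given reduced column of $T$ has a unique counterpart in $T'$. This is where the condition $R\subseteq F_{\delta,\nu}$ built into the definition of a vertical L is essential: it forces all of $q_0',\dots,q_\ell'$ to be relevant nodes, so they are all recorded by the reduced column vector, and the vertical flushing — which preserves exactly this data while keeping the heights of the non-relevant nodes fixed (\Cref{lem_reduced_vertical_flushing}) — matches them up correctly. Unlike the horizontal case, one must keep track of the non-relevant nodes, but since they never occur on the right side of a valid vertical L they do not interfere with the correspondence, and the argument goes through as in~\Cref{prop_leftintervals}.
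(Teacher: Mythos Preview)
Your proposal is correct and follows essentially the same approach as the paper: invoke \Cref{prop_right_interval_counting} together with $\overline c_\delta(T)=\overline c_{\delta'}(T')$ for Item~(1), and use the vertical-L characterization of \Cref{lem_vertical_L} for Item~(2). The paper's proof is in fact much terser (two sentences), while you spell out the well-definedness of $L\mapsto L'$ and the role of the relevance condition more carefully; this extra detail is correct and helpful but not a different argument.
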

		
		\begin{proof}
			By~\Cref{prop_right_interval_counting}, the number of right intervals with top element $T$ depends only of the reduced column vector $\bar c_{\delta}(T)$. Since the $\bar c_{\delta}(T)=\bar c_{\delta'}(T')$, then Item (1) follows. Item (2) is straight forward from the characterization of right intervals in~\Cref{lem_vertical_L}.
		\end{proof}
		
		Examples of the bijection between right intervals are illustrated in~\Cref{fig:BijectionRightIntervals,fig:BijectionRightIntervals2}. The maximal vertical L's are marked red for easier visualization. The green nodes are the non-relevant nodes.
%		\cesar{Clement: please make the following modifications to~\Cref{fig:BijectionRightIntervals,fig:BijectionRightIntervals2}. Remove the labels of the columns. Or better say, replace them by the labels $\bar j_0,\bar j_1,\dots , \bar j_{10}$ of the eleven reduced columns (more in~\Cref{fig:BijectionRightIntervals2}), ordered from shortest to longest, from right to left, as in~\Cref{fig_column_order_reduced}. I leave the decision of whether to write the number of edges in each reduced column up to you.}
		
		\begin{figure}[h]
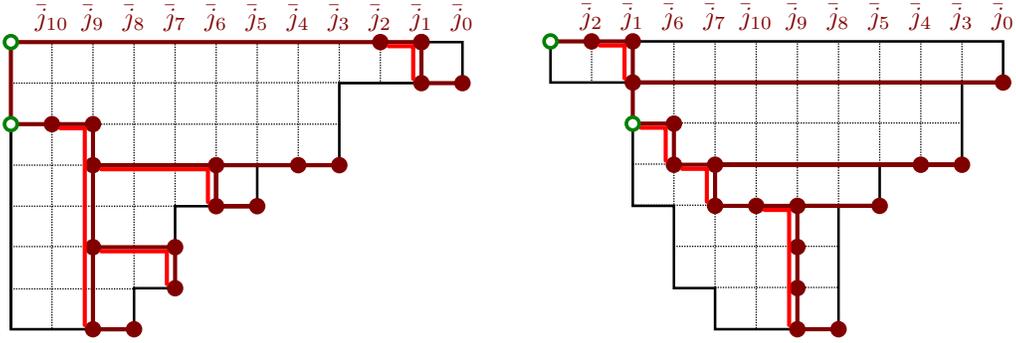

			\begin{center}
				\centering
				\def\svgwidth{0.4\textwidth}
				\import{Figures/}{right-interval-nu.pdf_tex}
				\qquad
				\def\svgwidth{0.4\textwidth}
				\import{Figures/}{right-interval-delta.pdf_tex}
				\caption{Bijection between right intervals for %$\nu = (3,1,0,2,2,0,3,0)$ and
					$\delta^{max}$ and $\delta = (0,0,1,2,0,1,0) $.
					Both trees have reduced column vector $(0,1,0,0,0,0,1,1,0,3,0)$, whose entries plus one count the number of relevant nodes in the reduced columns. The green non-relevant nodes are not counted.}
				\label{fig:BijectionRightIntervals}
			\end{center}
		\end{figure} 
		
		\begin{corollary}\label{cor_rightintervals}
			% For any increment vectors $\delta,\delta'$ with respect to $\nu$,
			The number of right intervals of length $\ell$ in~$\altTamTrees{\nu}{\delta}$ and $\altTamTrees{\nu}{\delta'}$ are the same.
		\end{corollary}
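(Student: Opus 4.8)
The plan is to deduce this statement directly from~\Cref{prop_rightintervals}, in exact parallel to the way~\Cref{cor_leftintervals} follows from~\Cref{prop_leftintervals}. The key point is that~\Cref{prop_rightintervals}(2) already hands us an explicit bijection
\[
[T,T-L] \longmapsto [T',T'-L'], \qquad T'=\vflushing{\delta}{\delta'}(T),\ L'=\vflushing{\delta}{\delta'}(L),
\]
between the right intervals of $\altTamTrees{\nu}{\delta}$ and those of $\altTamTrees{\nu}{\delta'}$, so the two sets of right intervals are in bijection and hence equinumerous.

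To upgrade this to a length-by-length count, I would first observe that the bijection is length-preserving. Indeed, by the natural correspondence underlying the vertical flushing, a vertical L of length $\ell$ sitting in reduced column $\bar j_i$ of $T$ is sent to the unique vertical L of the same length $\ell$ in reduced column $\bar j_i$ of $T'$; since the length of a right interval $[T-L,T]$ is by definition the length of $L$, the image interval $[T'-L',T']$ again has length $\ell$. Restricting the bijection above to intervals of a fixed length therefore yields a bijection between the length-$\ell$ right intervals on each side, which gives the claimed equality of counts.

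I expect no real obstacle here: all the work has been front-loaded into establishing that $\vflushing{\delta}{\delta'}$ preserves the reduced column vector (\Cref{lem_reduced_vertical_flushing}) and into the characterization of right intervals by vertical L's (\Cref{lem_vertical_L,prop_right_interval_counting}). The only thing left to verify is the length-preservation noted above, which is immediate from how $L'$ is defined. Alternatively, one could bypass the explicit bijection and argue purely enumeratively: by~\Cref{prop_right_interval_counting} the number of length-$\ell$ right intervals with top element $T$ depends only on the reduced column vector $\bar c_\delta(T)$ through the quantity $|\{i:\bar c_i\ge \ell\}|$, and summing this over all $(\delta,\nu)$-trees produces the same total as summing over all $(\delta',\nu)$-trees, precisely because $\vflushing{\delta}{\delta'}$ matches up reduced column vectors bijectively. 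Either route makes the corollary a one-line consequence of the machinery already in place.
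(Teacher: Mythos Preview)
Your proposal is correct and matches the paper's own proof, which simply states that the corollary is a direct consequence of \Cref{prop_rightintervals}. You have spelled out in more detail why the bijection is length-preserving, but the underlying approach is identical.
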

		\begin{proof}
			This is a direct consequence of~\Cref{prop_rightintervals}.
		\end{proof}

		\begin{figure}[h]
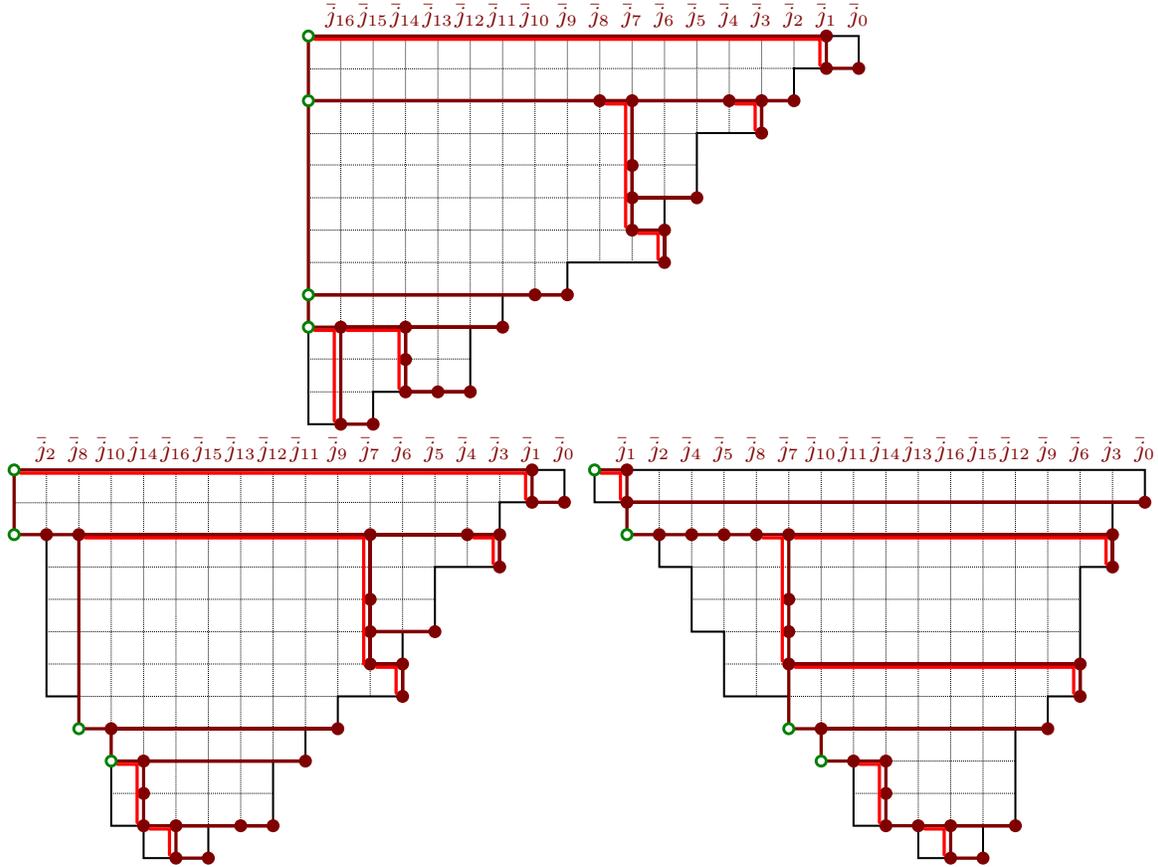

			\begin{center}
				\begin{small}
				\centering
				\def\svgwidth{7.5cm}
				\import{Figures/}{right-interval-2-nu.pdf_tex}
				
				\def\svgwidth{7.5cm}
				\import{Figures/}{right-interval-2-delta.pdf_tex}
				\def\svgwidth{7.5cm}
				\import{Figures/}{right-interval-2-delta2.pdf_tex}
				\end{small}
				\caption{Bijection between right intervals for $\nu = (2,3,0,1,2,3,0,1,0,2,1,2,0)$, \\ $\delta^{max}$, 
					$\delta = (2,0,1,1,2,0,1,0,2,0,2,0) $ and $\delta' = (1,0,0,1,1,0,0,0,2,0,1,0)$. The three trees have reduced column vector $(0,1,0,1,0,0,1,3,0,0,0,0,0,0,2,0,1)$.}
				\label{fig:BijectionRightIntervals2}
			\end{center}
		\end{figure}

	\section*{Acknowledgement}
    This project started during discussions at the SLC 88 conference in Strobl in 2022, and we are grateful for the pleasant and motivating atmosphere during the conference. We are also grateful to Wenjie Fang, Matias von Bell, and especially Fr\'ed\'eric Chapoton for helpful and interesting discussions.

\bibliographystyle{alpha}
\bibliography{sample}

\end{document}